\begin{document}

\title{Finite Element Methods For Wave Propagation With
 Debye Polarization In Nonlinear Dielectric Materials 
}

\titlerunning{FEMs  For Wave Propagation In Nonlinear Dielectric Materials}        

\author{Qiumei Huang       \and
         Shanghui Jia      \and
      Fei Xu \and
        Zhongwen Xu \and
       Changhui Yao
        }

\authorrunning{Q.M. Huang, S.H. Jia,  F. Xu, Z.W. Xu,and C.H. Yao} 

\institute{          Qiumei Huang \at
              Beijing Institute for Scientific and Engineering Computing, Beijing University of technology, Beijing, 100124,P. R. China.
               \email{qmhuang@bjut.edu.cn}  \\
              Shanghui Jia  \at  School of Statistic and Mathematics, Central University of Finance and Economics, Beijing 100081,P. R. China. \email{shjia@lsec.cc.ac.cn}.\\
              Fei Xu \at  Beijing Institute for Scientific and Engineering Computing, Beijing University of technology, Beijing, 100124,P. R. China.\email{xufei@lsec.cc.ac.cn}.\\
               Zhongwen Xu \at  School of Mathematics and Statistics, Zhengzhou University, Zhengzhou,Henan 450001, P.R. China. \email{chyao@zzu.edu.cn}.\\
             Corresponding author:   Changhui Yao \at  School of Mathematics and Statistics, Zhengzhou University, Zhengzhou,Henan 450001, P.R. China. \email{chyao@lsec.cc.ac.cn}  \\
}
\date{Received: date / Accepted: date}

\maketitle

\begin{abstract}
In this paper, we consider the wave propagation with
 Debye polarization in nonlinear dielectric materials. For this model, the Rother's method is employed to derive  the well-posedness of the electric fields and the existence of the polarized fields by monotonicity theorem as well as the boundedness of the two fields  are established. Then, the time errors are derived for the semi-discrete solutions  by the order $O(\Delta t)$.
 Subsequently, decoupled the full-discrete scheme of the Euler  in time and  Raviart-Thomas-N$\acute{e}$d$\acute{e}$lec element $k\geq 2$ in spatial is established.  Based on the truncated error, we present the convergent analysis with the order $O(\Delta t+h^s) $ under the technique of a-prior $L^\infty$ assumption. For the $k=1$, we employ the superconvergence technique to ensure the a-prior $L^\infty$ assumption. In the end, we give some numerical examples to demonstrate our theories.

\keywords{Maxwell's Equations  \and Nonlinear
 \and   Dielectric Materials   \and Finite Element Methods \and Error Estimates}
 \PACS{65N30 \and 65N15  \and 35J25}
\end{abstract}

\section{Introduction}

The wave propagation  can be controlled by the Maxwell's system
\begin{eqnarray*}
&&\nabla\times{\bf E}=-\frac{\partial{\bf B}}{\partial t},\ \
\nabla\times{\bf H}=\frac{\partial{\bf D}}{\partial t}+{\bf J},\\
&&\nabla\cdot{\bf D}=\rho,\hspace{0.85cm}
\nabla\cdot{\bf B}=0,
\end{eqnarray*}
where ${\bf E}$ and ${\bf H}$ denote the strengths of the electric and magnetic fields, respectively. ${\bf D}$ and ${\bf B}$ are the electric and magnetic flux densities, respectively. ${\bf J}$ and $\rho$ represent the current density and the density of free electric charge, respectively. The above equations will be supplemented with the constitutive laws that describe the behavior of the material under the influence of the fields. Those constitutive laws are given by
\begin{eqnarray*}
&&{\bf D}=\epsilon_0{\bf E}+{\bf P},\ \
{\bf B}=\mu_0{\bf H}+\mu_0{\bf M},\ \
{\bf J}={\bf J_c}+{\bf J_s},
\end{eqnarray*}
where ${\bf P}$ and ${\bf M}$ represent the electric and magnetic polarization, respectively. ${\bf J_c}$ and ${\bf J_s}$ stand for the conduction current density and source current density, respectively. $\epsilon_0$ and $\mu_0$ are the electric permittivity of free space and the magnetic permeability, respectively. We assume ${\bf M=0}$ since we can choose to ignore the magnetic effect among the dielectric materials

\par
The investigation of the represented  polarization term is a key issue in Maxwell's equations. In \cite{MR1}, the authors employed the Maxwell's equations with
linear constitutive relationship describing the material
polarization in a convolution representation
\begin{eqnarray}
\label{equation:eq-1}
{\bf P}({\bf x},t)=\int_0^tg(t-s,{\bf x}){\bf E}({\bf x},s)ds.
\end{eqnarray}

Practically,  there were some evidences that this relationship should be better described by a nonlinear law \cite{MR2,MR3,MR4}. By the efforts in \cite{MR5}, the authors derived the theoretical results after a representation of the polarization by a nonlinear convolution
\begin{eqnarray}
\label{equation:eq-2}
{\bf P}({\bf x},t)=\int_0^tg(t-s,{\bf x})({\bf E}({\bf x},s)+f({\bf E}({\bf x},s)))ds.
\end{eqnarray}

The polarization representation in (\ref{equation:eq-1}) originates from the model proposed by Debye \cite{MR6}
\begin{eqnarray}
\label{equation:eq-3}
\tau\dot {\bf P}+{\bf P}=\epsilon_0(\epsilon_s-\epsilon_\infty){\bf E},
\end{eqnarray}
with the kernel $g(t)=\frac{\epsilon_0(\epsilon_s-\epsilon_\infty)}{\tau}e^{-\frac{t}{\tau}}$
, where $ \epsilon_s$, $\epsilon_\infty$ and $\tau $ stand by the static relative permittivity,
the value of permittivity for an extremely
high frequency field and the relaxation time of the dielectric material, respectively. Similar to this representation, the
nonlinearly forced Debye model based on the differential equation can be rewritten by
\begin{eqnarray}
\label{equation:eq-4}
\tau\dot {\bf P}+{\bf P}=\epsilon_0(\epsilon_s-\epsilon_\infty)({\bf E}+f({\bf E})).
\end{eqnarray}
In this paper, we explore a more realistic model that includes a nonlinear function of the polarization given by the nonlinear
Debye equation
\begin{eqnarray}
\label{equation:eq-5}
\tau\dot {\bf P}+f({\bf P})=\epsilon_0(\epsilon_s-\epsilon_\infty){\bf E},
\end{eqnarray}
where the nonlinear function $f$: $R^d\rightarrow R$, $d=1,2,3$  in $C^1$ with $f(0)=0$ and $0<f'(x)<B$ for all $x\in R^d$, $B$ is a fixed positive constant. Moreover, $f$ is supposed to be strongly monotone, Lipschitz continuous and bounded, which concludes as follows
\begin{eqnarray}
\label{equation:eq-006}
&&(f(x)-f(y))\cdot(x-y)\geq \omega_f|x-y|^2,\ \omega_f>0,\ \forall x,y \in R^d,\\
\label{equation:eq-007}
&&|f(x)-f(y)|\leq C_L|x-y|\ ,C_L>0,\ \forall x,y\in R^d,\\
\label{equation:eq-008}
&&|f(x)|\leq M|x|\ ,M>0,\ \forall x\in R^d.
\end{eqnarray}
It's reasonable of such the assumptions on the nonlinear function $f$ since one would choose $f({
\bf P})=\delta_1{\bf P}+\delta_2{\bf P}^3=:{\bf P}(\delta_1+\delta_2|{\bf P}|^2)$ in \cite{MR7}.

Assumed that the domain  is any convex, bounded and simply connected Lipschitz polyhedron $\Omega$.  For a given time $T$, we can derive the coupling model as follows
\begin{eqnarray}
\label{equation:eq-6}
\mu_0\epsilon_0\partial_{tt}{\bf E}&+&\frac{\epsilon_0\mu_0(\epsilon_s-\epsilon_\infty)}{\tau}\partial_t{\bf E}+\nabla\times\nabla\times{\bf E}
-\frac{\epsilon_0\mu_0(\epsilon_s-\epsilon_\infty)}{\tau^2}f'({\bf P}){\bf E}\nonumber\\
\label{equation:eq-7}
&+&\frac{\mu_0}{\tau^2}f'({\bf P})f({\bf P})=-\mu_0\partial_t{\bf J}_s, \ \ \ \ \ \ \ in\ \ \Omega \times (0,T], \\
\tau\partial_t{\bf P}&+&f({\bf P})=\epsilon_0(\epsilon_s-\epsilon_\infty){\bf E},\ \ \ \hspace{0.8cm}in\ \ \Omega \times (0,T],
\end{eqnarray}
with the initial data
\begin{eqnarray}
\label{equation:eq-8}
&&{\bf E}({\bf x},0)={\bf E}_0\ ,\ \partial_t{\bf E}({\bf x},0)={\bf E}_0'\ ,\ {\bf P}({\bf x},0)={\bf P}_0,\ \ \ {\bf x}\in\Omega,
\end{eqnarray}
and the perfectly electric boundary condition
\begin{eqnarray}
\label{equation:eq-9}
{\bf E}\times {\bf n}=0,\ \ \ on  \ \ \partial\Omega\times(0,T].
\end{eqnarray}

\par
For the mathematical model of  nonlinear
 materials, there are some wonderful and interesting results.
 In \cite{MR71}, the authors concerned with a mathematical model in one dimension describing the electromagnetic interrogation dielectric materials and addressed the well-posedness and regularity solution. They also illustrated that the solution had higher regularity in time even though the input source may be a windowed signal of distributional type.
 The high frequency pulse propagation
in nonlinear dielectric materials in one dimension was investigated and a Galerkin method to derive existence, uniqueness
and continuous dependence of the resulting system was employed in \cite{MR7}.
  In \cite{MR8},  the authors considered a setup where the waves propagate toward a preferred direction,
called range, analyzed the solution  with the Markov limit
theorem and obtained a detailed asymptotic characterization of the electromagnetic wave
field in the long range limit.

\par
Numerically,
for nonlinear electromagnetic systems, there are plenty of efforts in increasing efficient finite element  methods \cite{MR0007,MR012,MR016,MR020,MR018,MR003,MR116}. In \cite{MR012}, the authors investigated the nonlinear eddy current model in GO silicon steel laminations and studied an $H-\psi$ formulation  in laminated conductors. They then explored homogenization of quasi-static
Maxwell's equations and they also derived the three-dimensional eddy current problem in laminated structures in [12].
In \cite{MR026}, the authors  started
with derivation of a mathematical model assuming a nonlinear dependency of magnetic
field ${\bf H}$.
A nonlinear degenerate transient eddy current model was studied and the convergence
and the deduced error estimates of the approximation to the weak solution were provided in [18].

\par
 In this paper, we select the variational problem in  $H(curl,\Omega)$ for the dimensional $d=2,3$ space, not in ${\bf H}^1(\Omega)$ like in \cite{MR71,MR7} . Then, we  present the Euler semi-discrete scheme
 in time. This scheme is decoupled between the electric fields and the polarization, which means that the derived electric fields equation is the linear problem and the polarized equation is nonlinear at each time step.
 The existence and uniqueness of the linear problem can be discussed by Lax-Milgram  lemma directly.
 We use Rothe's method  to prove that the polarized solution converges to that of the variational problem, which is based on the bounded,  coercive, strictly monotone and hemi-continuous of the defined nonlinear operator.
 We further discuss the convergence in time for the decoupled systems based on the boundedness in $L^2$-norm. Morover, we analyze the error estimates of the discrete scheme by the  Euler
and N\'{e}d\'{e}lec-Raviart-Thomas element, which has to ensure $k\geq 2$ element since the a-prior $L^\infty$ assumption is employed.
 For the $k=1$ element, we use the global superconvergence analysis to ensure  the a-prior $L^\infty$ assumption.
 At last, we give some numerical examples to demonstrate our theories.

\par
The outline of the paper is as follows: in section 2, we introduce the spaces and the variational problem. The Rothe's
method is pushed and the convergent analysis in time is set up in section 3. In section 4, We derive the solvability of
the full discrete scheme and the error estimates for higher finite element. The superconvergence to ensure the a-prior $L^\infty$
assumption is given in section 5. The theoretical results are illustrated by a broad range of numerical examples (including
the convergent data, figures of the numerical solutions and error values at grids on the mesh) in the final section.

\section{Variational Formulation}

The most frequently used spaces in the subsequent analysis
are the following two Sobolev spaces
\begin{eqnarray*}
&&H(curl,\Omega)=\{{\bf u}\in L^2(\Omega)^d;\nabla\times{\bf u}\in  L^2(\Omega)^d  \},\ d=2,3,
\end{eqnarray*}
and its subspace
\begin{eqnarray*}
&&X:=H_0(curl,\Omega)=\{{\bf u}\in H(curl,\Omega), {\bf u}\times {\bf n}=0, \ on\ \   \partial\Omega \},
\end{eqnarray*}
which is the equipped with the inner product
$$({\bf u},{\bf v})_{H(curl, \Omega)}=({\bf u},{\bf v})+(\nabla\times {\bf u},\nabla\times {\bf v}),$$
and the norm
$$ \|{\bf u}\|_X^2=\|{\bf u}\|_0^2+\|\nabla\times {\bf u}\|_0^2.$$
Throughout this article, we use boldface notation to represent vector-valued quantities, such as $Y:={\bf L}^2(\Omega):=(L^2(\Omega))^d$.
Now we can define the weak formulation.

${\bf Definition\ 2.1}$\ \ The potential fields ${\bf E}$ and ${\bf P}$, satisfying ${\bf E}\in H^2(0,T;X)$ and ${\bf P}\in H^1(0,T;Y)$, are the weak solutions of the Maxwell's equations (\ref{equation:eq-6})-(\ref{equation:eq-9}). That is, for any $\Phi\in X,\Psi\in Y$, with the initial data ${\bf E}({\bf x},0)={\bf E}_0\ ,\ \partial_t{\bf E}({\bf x},0)={\bf E}_0'\ ,\ {\bf P}({\bf x},0)={\bf P}_0,$
 there holds
\begin{eqnarray}
\epsilon_0\mu_0(\partial_{tt}{\bf E},{\bf\Phi})
+\frac{\epsilon_0\mu_0(\epsilon_s-\epsilon_\infty)}{\tau}(\partial_t{\bf E},{\bf \Phi})&+&(\nabla\times{\bf E},\nabla\times{\bf\Phi})\nonumber\\
\label{equation:eq-10}
-
\frac{\epsilon_0\mu_0(\epsilon_s-\epsilon_\infty)}{\tau^2}(f'({\bf P}){\bf E},{\bf \Phi}) +\frac{\mu_0}{\tau^2}(f'({\bf P})f({\bf P}),{\bf\Phi})&=&-\mu_0(\partial_t{\bf J}_s,{\bf\Phi}),\\
\label{equation:eq-11}
(\tau\partial_t{\bf P},{\bf \Psi})+(f({\bf P}),{\bf \Psi})&=&\epsilon_0(\epsilon_s-\epsilon_\infty)({\bf E},{\bf \Psi}).
\end{eqnarray}

\section{Well-posedness of weak formulation}
In this section, we first introduce the semi-discretization in time for (13)-(14). We then discuss the stability of the
solutions of the semi-discretization in time and its existence and uniqueness. The convergence and error analysis in time
are discussed in subsections 3.4 and 3.5, respectively.
\subsection{Semi-discretization in time}
In this subsection, we use Rothe's method to study the  solutions of (\ref{equation:eq-10})-(\ref{equation:eq-11}). Let $n$ be a positive integer and $\{t_i=i\Delta t:i=0,1,\cdots,n\}$ be a equidistant partition of $[0,T]$ with $\Delta t=T/n$. Now set
\begin{eqnarray*}
&&u_i=u(x_i)\ ,\ \delta u_i=\frac{u_i-u_{i-1}}{\Delta t}\ ,\ \delta^2u_i=\frac{\delta u_i-\delta u_{i-1}}{\Delta t}.
\end{eqnarray*}
The decoupled semi-discrete approximation to the equations (\ref{equation:eq-10})-(\ref{equation:eq-11}) reads: for the given ${\bf E}_0$, ${\bf P}_0,$ ${\bf E}_0'$, and for any ${\bf \Phi}\in X,{\bf \Psi}\in Y$, find ${\bf E}_i\in X$, ${\bf P}_i\in Y$,
$1\leq i\leq n$,  such that
\begin{eqnarray}
\mu_0\epsilon_0(\delta^2{\bf E}_i,{\bf \Phi})
                   &+&A_1(\delta{\bf E}_i,{\bf \Phi})+(\nabla\times{\bf E}_i,\nabla\times{\bf \Phi})
-A_2(f'({\bf P}_{i-1}){\bf E}_i,{\bf \Phi})\nonumber\\
\label{equation:eq-12}
&+&\frac{\mu_0}{\tau^2}(f'{\bf P}_{i-1})f({\bf P}_{i-1}),{\bf \Phi})=-\mu_0({\bf g}_i,{\bf \Phi}),\\
\label{equation:eq-13}
&&\tau(\delta{\bf P}_i,{\bf \Psi})+(f({\bf P}_i),{\bf \Psi})=\epsilon_0(\epsilon_s-\epsilon_\infty)({\bf E}_i,{\bf \Psi}),
\end{eqnarray}
where $A_1=\frac{\epsilon_0\mu_0(\epsilon_s-\epsilon_\infty)}{\tau}$, $A_2=\frac{\epsilon_0\mu_0(\epsilon_s-\epsilon_\infty)}{\tau^2}$, ${\bf g}=\partial_t{\bf J}_s$.


Next, we give some lemmas about the boundedness of the potential fields ${\bf E}_i$ and $ {\bf P}_i$.
\subsection{The stability of the solutions of semi-discretization in time}

\begin{lemma}
For $j=1,\cdots,n$, there exists a positive constant $C$ depending on the parameter $\epsilon_0,\epsilon_s,\epsilon_\infty,\tau,\mu_0,$ and $\|{\bf g}\|_{L(0,T;{\bf L}^2(\Omega))},\|{\bf E}_0\|_0$, $\|{\bf P}_0\|_0$   such that
\begin{eqnarray}
& \ &\frac{\mu_0\epsilon_0}{2}\|{\bf E}_j\|_0^2+\frac{\epsilon_0\mu_0}{2}\sum\limits_{i=1}^j\|{\bf E}_i-{\bf E}_{i-1}\|_0^2\nonumber\\
\label{equation:eq-14}
&+&\frac{\Delta t^2}{2}\sum\limits_{i=1}^j\|\nabla\times{\bf E}_i\|_0^2
 +\frac{\tau}{2}\|{\bf P}_j\|_0^2+\frac{\tau}{2}\sum_{i=1}^j\|{\bf P}_i-{\bf P}_{i-1}\|_0^2\leq C.
\end{eqnarray}

\end{lemma}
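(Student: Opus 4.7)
The plan is to derive the estimate by a discrete energy argument: test equations (15)--(16) with carefully chosen discrete test functions, apply the identity $(a-b,a)=\tfrac12(\|a\|_0^2-\|b\|_0^2+\|a-b\|_0^2)$ to telescope the resulting sums, and close via discrete Gronwall.

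For the electric-field equation (15), I would multiply through by $\Delta t$ and test with ${\bf \Phi}={\bf E}_i$, then sum over $i=1,\dots,j$. The first-derivative term $A_1\Delta t(\delta{\bf E}_i,{\bf E}_i)=A_1({\bf E}_i-{\bf E}_{i-1},{\bf E}_i)$ telescopes into a contribution proportional to $\|{\bf E}_j\|_0^2+\sum\|{\bf E}_i-{\bf E}_{i-1}\|_0^2$, the curl-curl term produces $\sum\Delta t\|\nabla\times{\bf E}_i\|_0^2$ directly, and the second-derivative term, after using $\Delta t\,\delta^2{\bf E}_i=\delta{\bf E}_i-\delta{\bf E}_{i-1}$ and summation by parts, reduces to a boundary pairing $(\delta{\bf E}_j,{\bf E}_j)-({\bf E}'_0,{\bf E}_1)$ plus cross terms $\Delta t\sum(\delta{\bf E}_i,\delta{\bf E}_{i+1})$ controlled by $\varepsilon$-Young. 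For (16) I would test with ${\bf \Psi}={\bf P}_i$ after multiplying by $\Delta t$; the same identity produces the required $\tfrac{\tau}{2}\|{\bf P}_j\|_0^2+\tfrac{\tau}{2}\sum\|{\bf P}_i-{\bf P}_{i-1}\|_0^2$ on the left, while $f(0)=0$ combined with the strong monotonicity assumption gives $(f({\bf P}_i),{\bf P}_i)\ge\omega_f\|{\bf P}_i\|_0^2$, a sign-definite contribution to be either discarded or used for absorption.

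The nonlinear coupling terms $A_2(f'({\bf P}_{i-1}){\bf E}_i,\cdot)$ and $\tfrac{\mu_0}{\tau^2}(f'({\bf P}_{i-1})f({\bf P}_{i-1}),\cdot)$ in (15), together with the source $-\mu_0\Delta t({\bf g}_i,\cdot)$, are bounded by combining the pointwise estimates $|f'|\le B$ and $|f(x)|\le M|x|$ from (8) with Cauchy-Schwarz and Young, producing right-hand contributions of the form $C\Delta t(\|{\bf E}_i\|_0^2+\|{\bf P}_{i-1}\|_0^2+\|{\bf g}_i\|_0^2)$. The driving sum is absorbed into the data constant via $\sum\Delta t\|{\bf g}_i\|_0^2\le C\|{\bf g}\|_{L^{\infty}(0,T;{\bf L}^2(\Omega))}^2$.

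The main obstacle is closing the Gronwall loop, since the electric-field inequality carries $\|{\bf P}_{i-1}\|_0^2$ on the right while the polarization inequality carries $\|{\bf E}_i\|_0^2$ on the right. I plan to add the two summed inequalities with judiciously chosen positive weights so that all cross terms of type $\varepsilon\|{\bf E}_i\|_0^2$ and $\varepsilon\|{\bf P}_i\|_0^2$ are absorbed into the definite left-hand contributions, leaving a residual of the form $C\Delta t\sum_{i=1}^j(\|{\bf E}_i\|_0^2+\|{\bf P}_{i-1}\|_0^2)$ plus initial-data and source terms. Applying the discrete Gronwall inequality (valid provided $\Delta t$ is sufficiently small, so that the prefactor of $\|{\bf E}_j\|_0^2$ and $\|{\bf P}_j\|_0^2$ on the right can be moved to the left) then yields the claimed bound, with $C$ depending only on the stated model parameters, on $T$, on the initial data, and on the source norm.
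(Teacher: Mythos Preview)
Your energy argument for the polarization equation and your treatment of the nonlinear coupling terms are fine, and adding the two inequalities to close via discrete Gronwall is exactly the right endgame. The gap is in your handling of the second-order term $\mu_0\epsilon_0(\delta^2{\bf E}_i,{\bf E}_i)$.

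After your summation by parts you obtain a boundary pairing $(\delta{\bf E}_j,{\bf E}_j)$ and cross terms $\Delta t\sum_i(\delta{\bf E}_i,\delta{\bf E}_{i+1})$. Neither of these can be controlled at this stage. Young's inequality on the cross terms produces a contribution of size $\Delta t\sum_i\|\delta{\bf E}_i\|_0^2$, but the only sign-definite quantity of this type on your left side is $\frac{A_1}{2}\sum_i\|{\bf E}_i-{\bf E}_{i-1}\|_0^2=\frac{A_1}{2}\Delta t^2\sum_i\|\delta{\bf E}_i\|_0^2$, which is smaller by a factor $\Delta t$ and cannot absorb it. The boundary term $(\delta{\bf E}_j,{\bf E}_j)=\frac{1}{2\Delta t}\bigl(\|{\bf E}_j\|_0^2-\|{\bf E}_{j-1}\|_0^2+\|{\bf E}_j-{\bf E}_{j-1}\|_0^2\bigr)$ is likewise not sign-definite and carries a bad $1/\Delta t$. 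In short, testing a second-order-in-time scheme with the solution itself, rather than its discrete velocity, does not close.

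The paper avoids this by a double summation: it first sums equation~(15) over $k=1,\dots,i$ (with the ghost value ${\bf E}_{-1}={\bf E}_0$, so $\delta{\bf E}_0=0$), which collapses $\sum_{k=1}^i\delta^2{\bf E}_k$ to $\tfrac{1}{\Delta t}\delta{\bf E}_i$. Only then does it test with ${\bf E}_i$ and sum over $i=1,\dots,j$. The term $\frac{\epsilon_0\mu_0}{\Delta t}(\delta{\bf E}_i,{\bf E}_i)$ telescopes cleanly via your identity to produce exactly $\frac{\epsilon_0\mu_0}{2}\|{\bf E}_j\|_0^2+\frac{\epsilon_0\mu_0}{2}\sum_i\|{\bf E}_i-{\bf E}_{i-1}\|_0^2$, while the curl term becomes $(\sum_{k=1}^i\nabla\times{\bf E}_k,\nabla\times{\bf E}_i)$ and, after Abel summation and multiplication by $\Delta t^2$, yields the $\frac{\Delta t^2}{2}\sum_i\|\nabla\times{\bf E}_i\|_0^2$ you see in the statement. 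The appearance of $\Delta t^2$ (rather than $\Delta t$) in front of the curl sum is the fingerprint of this preliminary time-summation step.
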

\begin{proof}
Considering the semi-discrete scheme of equation (\ref{equation:eq-12}),
substituting ${\bf \Phi}$ by ${\bf E}_i$, defining a ghost point ${\bf E}_{-1}={\bf E}_0$ and making summation for $k=1,\cdots,i$,  we have
\begin{eqnarray*}
&&\frac{\epsilon_0\mu_0}{\Delta t}(\delta{\bf E}_i,{\bf E}_i)+\frac{A_1}{\Delta t}({\bf E}_i,{\bf E}_i)-\frac{A_1}{\Delta t}({\bf E}_0,{\bf E}_i)-A_2(\sum_{k=1}^if'({\bf P}_{k-1}){\bf E}_k,{\bf E}_i)\\
&&+\frac{\mu_0}{\tau^2}(\sum_{k=1}^if({\bf P}_{k-1})f'({\bf P}_{k-1}),{\bf E}_i)+(\sum_{k=1}^i\nabla\times{\bf E}_k,\nabla\times{\bf E}_i)=-\mu_0(\sum_{k=1}^i{\bf g}_k,{\bf E}_i)\ ,
\end{eqnarray*}
%
which is equal to
\begin{eqnarray*}
&&\frac{\epsilon_0\mu_0}{2\Delta t^2}\|{\bf E}_i-{\bf E}_{i-1}\|_0^2+\frac{\epsilon_0\mu_0}{2\Delta t^2}\|{\bf E}_i\|_0^2-\frac{\epsilon_0\mu_0}{2\Delta t^2}\|{\bf E}_{i-1}\|_0^2\\
&&+\frac{A_1}{\Delta t}\|{\bf E}_i\|_0^2-\frac{A_1}{\Delta t}({\bf E}_0,{\bf E}_i)-A_2(\sum_{k=1}^if'({\bf P}_{k-1}){\bf E}_k,{\bf E}_i)+\frac{\mu_0}{\tau^2}(\sum_{k=1}^if({\bf P}_{k-1})f'({\bf P}_{k-1}),{\bf E}_i)\\
&&+\frac{1}{2}\|\nabla\times{\bf E}_i\|_0^2+\frac{1}{2}\|\sum_{k=1}^i\nabla\times{\bf E}_k\|_0^2-\frac{1}{2}\|\sum_{k=1}^{i-1}\nabla\times{\bf E}_k\|_0^2=-\mu_0(\sum_{k=1}^i{\bf g}_k,{\bf E}_i).
\end{eqnarray*}
Summing $i=1,\cdots,j$ up again, from Abel's summation, we have
\begin{eqnarray}
&\ &\sum_{i=1}^j\Delta t^2A_2(\sum_{k=1}^if'({\bf P}_{k-1}){\bf E}_k,{\bf E}_i)\nonumber\\
&\leq&\sum_{i=1}^j\Delta t^2A_2\cdot B(\sum_{k=1}^i{\bf E}_k,\sum_{k=1}^i{\bf E}_k-\sum_{k=1}^{i-1}{\bf E}_k)\nonumber\\
\label{equation:eq-18}
&=&\frac{\Delta t^2A_2}{2}\cdot B\sum_{i=1}^j\|{\bf E}_i\|_0^2+\frac{\Delta t^2A_2}{2}\cdot B\|\sum_{i=1}^j{\bf E}_i\|_0^2
\end{eqnarray}
and
\begin{eqnarray}
&&\frac{\epsilon_0\mu_0}{2}\|{\bf E}_j\|_0^2+\frac{\epsilon_0\mu_0}{2}\sum_{i=1}^j\|{\bf E}_i-{\bf E}_{i-1}\|_0^2
+\frac{\Delta t^2A_2}{2}\cdot B\sum_{i=1}^j\|{\bf E}_i\|_0^2+\Delta tA_1\sum_{i=1}^j\|{\bf E}_i\|_0^2\nonumber\\
&&\hspace{0.5cm}+\frac{\Delta t^2A_2}{2}\cdot B\|\sum_{i=1}^j{\bf E}_i\|_0^2
+\frac{\Delta t^2}{2}\sum_{i=1}^j\|\nabla\times{\bf E}_i\|_0^2\nonumber\\
&&\leq\frac{\epsilon_0\mu_0}{2}\|{\bf E}_0\|_0^2+\Delta tA_1\sum_{i=1}^j({\bf E}_0,{\bf E}_i)
+\frac{\Delta t^2\mu_0}{\tau^2}\sum_{i=1}^j|(\sum_{k=1}^if({\bf P}_{k-1})f'({\bf P}_{k-1}),{\bf E}_i)|\nonumber\\
\label{equation:eq-15}
&&\hspace{0.5cm}+\mu_0\Delta t^2\sum_{i=1}^j|(\sum_{k=1}^i{\bf g}_k,{\bf E}_i)|=\sum_{i=1}^4S_i\ .
\end{eqnarray}
Now we can analyze the each term of (\ref{equation:eq-15}). The first term $S_1=\frac{\epsilon_0\mu_0}{2}\|{\bf E}_0\|_0^2 $
is trivial.
Using the Young's inequality, we have
\begin{eqnarray}
\label{equation:eq-16}
&&S_2=\Delta tA_1\sum_{i=1}^j({\bf E}_0,{\bf E}_i)\leq\frac{A_1T}{4\epsilon_1}\|{\bf E}_0\|_0^2+\epsilon_1\Delta tA_1\sum_{i=1}^j\|{\bf E}_i\|_0^2,\\
\label{equation:eq-17}
&&S_4=\mu_0\Delta t^2\sum_{i=1}^j|(\sum_{k=1}^i{\bf g}_k,{\bf E}_i)|\leq\frac{\mu_0\Delta t^2}{4\epsilon_2}\sum_{i=1}^j\|\sum_{k=1}^i{\bf g}_k\|_0^2+\epsilon_2\mu_0\Delta t^2\sum_{i=1}^j\|{\bf E}_i\|_0^2.
\end{eqnarray}
Note that $0<f'({ x})<B$ for any ${ x}\in R^d$,
by employing the boundedness of $f$ s.t. $|f({ x})|\leq M|{ x}|,\forall x\in R^d$, we have
\begin{eqnarray}
&&S_3=\frac{\Delta t^2\mu_0}{\tau^2}\sum_{i=1}^j|(\sum_{k=1}^if({\bf P}_{k-1})f'({\bf P}_{k-1}),{\bf E}_i)|\leq\frac{\Delta t^2\mu_0}{\tau^2}MB\sum_{i=1}^j|(\sum_{k=1}^i{\bf P}_{k-1},{\bf E}_i)|\nonumber\\
&&\ \ \ \ \ \leq\frac{\epsilon_3\Delta t^2\mu_0}{\tau^2}MBC_5(\sum_{i=1}^j\|{\bf P}_{i-1}\|_0)^2+\frac{\Delta t^2\mu_0}{4\epsilon_3\tau^2}MBC_5(\sum_{i=1}^j\|{\bf E}_i\|_0)^2\nonumber\\
\label{equation:eq-19}
&&\ \ \ \ \ \leq\frac{\Delta t\mu_0}{4\epsilon_3\tau^2}MBC_5T(\|{\bf P}_0\|_0^2
+\sum_{i=1}^{j-1}\|{\bf P}_i\|_0^2)
+\frac{\epsilon_3\Delta t\mu_0}{\tau^2}MBC_5T\sum_{i=1}^j\|{\bf E}_i\|_0^2.
\end{eqnarray}

On the other hand,  taking ${\bf \Psi}={\bf P}_i$ in the equation (\ref{equation:eq-13}),  we have
\begin{eqnarray*}
\tau(\delta{\bf P}_i,{\bf P}_i)+(f({\bf P}_i),{\bf P}_i)=\epsilon_0(\epsilon_s-\epsilon_\infty)({\bf E}_i,{\bf \Psi})\ ,
\end{eqnarray*}
which means
\begin{eqnarray*}
\frac{\tau}{2\Delta t}\|{\bf P}_i-{\bf P}_{i-1}\|_0^2+\frac{\tau}{2\Delta t}\|{\bf P}_i\|_0^2-\frac{\tau}{2\Delta t}\|{\bf P}_{i-1}\|_0^2+(f({\bf P}_i),{\bf P}_i)=\epsilon_0(\epsilon_s-\epsilon_\infty)({\bf E}_i,{\bf P}_i)\ .
\end{eqnarray*}
Summing for $i=1,\cdots,j$, from (\ref{equation:eq-006}), using the Young's inequality, we have
\begin{eqnarray}
&&\frac{\tau}{2}\|{\bf P}_j\|_0^2+\frac{\tau}{2}\sum_{i=1}^j\|{\bf P}_i-{\bf P}_{i-1}\|_0^2\leq\frac{\tau}{2}\|{\bf P}_0\|_0^2
\label{equation:eq-20}
+\frac{\epsilon_0(\epsilon_s-\epsilon_\infty)\Delta t}{4\epsilon_4}\sum_{i=1}^j\|{\bf E}_i\|_0^2+(\epsilon_0(\epsilon_s-\epsilon_\infty)\epsilon_4)\Delta t\sum_{i=1}^j\|{\bf P}_i\|_0^2,
\end{eqnarray}
where $\epsilon_1,\cdots,\epsilon_4$ are the constants, which satisfies $\frac{\epsilon_0\mu_0}{2}
-\epsilon_1\Delta tA_1-\epsilon_2\mu_0\Delta t^2-\epsilon_3\Delta t
\mu_0MBC_5T-\frac{\epsilon_0(\epsilon_s-\epsilon_\infty)\Delta t}
{4\epsilon_4}>0$ and $\frac{\tau}{2}
-\epsilon_4\Delta t\epsilon_0(\epsilon_s-\epsilon_\infty)>0$.

Adding (\ref{equation:eq-16})-(\ref{equation:eq-20}) and combining (\ref{equation:eq-15}) and the discrete Gr\"{o}nwall's inequality,
 we can complete the proof.
 \end{proof}

\begin{lemma}
For $j=1,\cdots,n$, there exists a positive constant $C$ depending on the parameter $\epsilon_0,\epsilon_s,\epsilon_\infty,\tau,\mu_0,$ and $\|{\bf g}\|_{L(0,T;{\bf L}^2(\Omega))},\|{\bf E}_0\|_0$, $\|{\bf P}_0\|_0$   such that
\begin{eqnarray*}
&&\frac{\mu_0\epsilon_0}{2}\|\delta{\bf E}_j\|_0^2+\frac{\mu_0\epsilon_0}{2}\sum\limits_{i=1}^j\|\delta{\bf E}_i-\delta{\bf E}_{i-1}\|_0^2+\frac{1}{2}\|\nabla\times{\bf E}_j\|_0^2+\frac{1}{2}\sum\limits_{i=1}^j\|\nabla\times({\bf E}_i-{\bf E}_{i-1})\|_0^2\leq C.
\end{eqnarray*}

\end{lemma}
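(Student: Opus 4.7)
The natural strategy is to repeat the energy argument of Lemma 1, but now test equation (\ref{equation:eq-12}) with the discrete time derivative $\delta{\bf E}_i$ instead of ${\bf E}_i$. The motivation is that $(\delta^2{\bf E}_i,\delta{\bf E}_i)$ telescopes into a bound for $\|\delta{\bf E}_j\|_0^2$, and $(\nabla\times{\bf E}_i,\nabla\times\delta{\bf E}_i)$ telescopes into a bound for $\|\nabla\times{\bf E}_j\|_0^2$, which are exactly the two pieces that appear in the target estimate.

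The core algebraic identities are the discrete product rules
\begin{eqnarray*}
2\Delta t\,(\delta^2{\bf E}_i,\delta{\bf E}_i) &=& \|\delta{\bf E}_i\|_0^2-\|\delta{\bf E}_{i-1}\|_0^2+\|\delta{\bf E}_i-\delta{\bf E}_{i-1}\|_0^2,\\
2\Delta t\,(\nabla\times{\bf E}_i,\nabla\times\delta{\bf E}_i) &=& \|\nabla\times{\bf E}_i\|_0^2-\|\nabla\times{\bf E}_{i-1}\|_0^2+\|\nabla\times({\bf E}_i-{\bf E}_{i-1})\|_0^2.
\end{eqnarray*}
After substituting ${\bf \Phi}=\delta{\bf E}_i$, multiplying by $\Delta t$, and summing for $i=1,\dots,j$, the LHS will contain precisely the four quantities in the assertion plus the nonnegative contribution $A_1\Delta t\sum_i\|\delta{\bf E}_i\|_0^2$ coming from $A_1(\delta{\bf E}_i,\delta{\bf E}_i)$, which is crucial for the absorption step later.

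For the RHS I would estimate the three remaining terms as follows: using $0<f'<B$, one has
$|A_2(f'({\bf P}_{i-1}){\bf E}_i,\delta{\bf E}_i)|\leq A_2 B\|{\bf E}_i\|_0\|\delta{\bf E}_i\|_0$; using also $|f(x)|\leq M|x|$, one has
$\frac{\mu_0}{\tau^2}|(f'({\bf P}_{i-1})f({\bf P}_{i-1}),\delta{\bf E}_i)|\leq\frac{\mu_0 BM}{\tau^2}\|{\bf P}_{i-1}\|_0\|\delta{\bf E}_i\|_0$; and $\mu_0|({\bf g}_i,\delta{\bf E}_i)|\leq \mu_0\|{\bf g}_i\|_0\|\delta{\bf E}_i\|_0$. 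Young's inequality then splits each product into a $\|\delta{\bf E}_i\|_0^2$ piece (absorbed into $A_1\Delta t\sum\|\delta{\bf E}_i\|_0^2$ on the LHS for small enough Young parameter) and a piece involving $\|{\bf E}_i\|_0^2$, $\|{\bf P}_{i-1}\|_0^2$, or $\|{\bf g}_i\|_0^2$. The first two are uniformly bounded thanks to Lemma 1, and the last is controlled by the assumed regularity of ${\bf g}$, so their time sums are bounded by $C$.

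The remaining task is to control the initial value $\|\delta{\bf E}_0\|_0$ that appears in the telescoped sum; with the ghost point convention ${\bf E}_{-1}={\bf E}_0$ already adopted in Lemma 1 this quantity is zero (otherwise one invokes ${\bf E}_0'\in Y$ from the initial data), and the initial curl term $\|\nabla\times{\bf E}_0\|_0$ is controlled by ${\bf E}_0\in X$. A final application of the discrete Gr\"onwall inequality to the resulting recursion in $\|\delta{\bf E}_i\|_0^2+\|\nabla\times{\bf E}_i\|_0^2$ closes the estimate. I expect the main obstacle to be purely bookkeeping: choosing the Young constants small enough so that the $\|\delta{\bf E}_i\|_0^2$ contributions from the three RHS terms can all be absorbed into the single $A_1\Delta t\sum\|\delta{\bf E}_i\|_0^2$ reservoir on the LHS without destroying positivity, exactly in the spirit of the $\epsilon_1,\dots,\epsilon_4$ smallness conditions stated at the end of Lemma 1's proof.
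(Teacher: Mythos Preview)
Your proposal is correct and follows essentially the same route as the paper: test (\ref{equation:eq-12}) with ${\bf \Phi}=\delta{\bf E}_i$, multiply by $\Delta t$, sum in $i$, telescope via the two discrete product rules you wrote, keep the nonnegative $A_1\Delta t\sum_i\|\delta{\bf E}_i\|_0^2$ on the left as an absorption reservoir, and bound the three cross terms on the right by Young's inequality together with the $\|{\bf E}_i\|_0$ and $\|{\bf P}_{i-1}\|_0$ bounds from Lemma~3.1. The paper's proof is in fact terser than yours---after writing the telescoped identity it simply says ``by the similar proof to Lemma~3.1 and Gr\"onwall's inequality''---so your explicit treatment of the Young parameters, the ghost-point convention for $\delta{\bf E}_0$, and the appearance of $\|\nabla\times{\bf E}_0\|_0^2$ on the right are all consistent with (and slightly more detailed than) what the authors do.
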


\begin{proof}
Considering the semi-discretescheme of (\ref{equation:eq-12}),
 substituting $\Phi$ by $\delta{\bf E}_i$ and making summation for $i=1,\cdots,j$, we have
\begin{eqnarray}
&&\frac{\epsilon_0\mu_0}{\Delta t}\sum_{i=1}^j(\delta{\bf E}_i-\delta{\bf E}_{i-1},\delta{\bf E}_i)+A_1\sum_{i=1}^j(\delta{\bf E}_i,\delta{\bf E}_i)-A_2\sum_{i=1}^j(f'({\bf P}_{i-1}){\bf E}_i,\delta{\bf E}_i) \nonumber\\
&&+\sum_{i=1}^j(\nabla\times{\bf E}_i,\nabla\times\delta{\bf E}_i)+\frac{\mu_0}{\tau^2}\sum_{i=1}^j(f({\bf P}_{i-1})f'({\bf P}_{i-1}),\delta{\bf E}_i)=-\mu_0\sum_{i=1}^j({\bf g}_i,\delta{\bf E}_i),
\end{eqnarray}
that is
\begin{eqnarray*}
&&\frac{\epsilon_0\mu_0}{2}\|\delta{\bf E}_j\|_0^2+\frac{\epsilon_0\mu_0}{2}\sum_{i=1}^j\|\delta{\bf E}_i-\delta{\bf E}_{i-1}\|_0^2
+A_1\sum_{i=1}^j\|\delta{\bf E}_i\|_0^2
+\frac{1}{2}\sum_{i=1}^j\|\nabla\times({\bf E}_i-{\bf E}_{i-1})\|_0^2
+\frac{1}{2}\|\nabla\times{\bf E}_j\|_0^2\\
&&\leq \frac{1}{2}\|\nabla\times{\bf E}_0\|_0^2+A_2\Delta t\sum_{i=1}^j(f'({\bf P}_{i-1}){\bf E}_i,\delta{\bf E}_i)+|\frac{\mu_0\Delta t}{\tau^2}\sum_{i=1}^j(f({\bf P}_{i-1})f'({\bf P}_{i-1}),\delta{\bf E}_i)|\\
&&\ \ \ +|\mu_0\Delta t\sum_{i=1}^j({\bf g}_i,\delta{\bf E}_i)|\ =\sum\limits_{i=1}^5S_i.
\end{eqnarray*}%
By the similar proof to Lemma 3.1, and by employing
 the Gr\"{o}nwall's inequality, we can complete the proof.

\end{proof}

\begin{lemma}
For $i=1,\cdots,n$, there exists a positive constant $C$ depending on the parameter $\epsilon_0,\epsilon_s,\epsilon_\infty,\tau,\|{\bf g}\|_{L(0,T;{\bf L}^2(\Omega))},\|{\bf E}_0\|_0$ and $\|{\bf P}_0\|_0$   such that
\begin{eqnarray}
\label{equation:eq-21}
\|\delta{\bf P}_i\|_0^2\leq C.
\end{eqnarray}
\end{lemma}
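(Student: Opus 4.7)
The plan is to test the semi-discrete polarization equation (\ref{equation:eq-13}) against $\Psi = \delta{\bf P}_i$ after first writing it as a difference between two successive time levels, so that the monotonicity property (\ref{equation:eq-006}) of $f$ can be exploited. First I would treat the base case $i=1$ separately: since $\tau\delta{\bf P}_1 + f({\bf P}_1) = \epsilon_0(\epsilon_s - \epsilon_\infty){\bf E}_1$, the bound $|f({\bf P}_1)| \leq M|{\bf P}_1|$ from (\ref{equation:eq-008}) together with Lemma 3.1 yields $\|\delta{\bf P}_1\|_0 \leq C$ directly.

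For $i \geq 2$, I would subtract equation (\ref{equation:eq-13}) written at step $i-1$ from the one at step $i$ to get
\begin{equation*}
\tau(\delta{\bf P}_i - \delta{\bf P}_{i-1}, \Psi) + (f({\bf P}_i) - f({\bf P}_{i-1}), \Psi) = \epsilon_0(\epsilon_s-\epsilon_\infty)({\bf E}_i - {\bf E}_{i-1}, \Psi).
\end{equation*}
Setting $\Psi = \delta{\bf P}_i$, and using ${\bf P}_i - {\bf P}_{i-1} = \Delta t\,\delta{\bf P}_i$, the strong monotonicity (\ref{equation:eq-006}) gives
\begin{equation*}
(f({\bf P}_i) - f({\bf P}_{i-1}), \delta{\bf P}_i) = \frac{1}{\Delta t}(f({\bf P}_i) - f({\bf P}_{i-1}), {\bf P}_i - {\bf P}_{i-1}) \geq \omega_f \Delta t\, \|\delta{\bf P}_i\|_0^2 \geq 0,
\end{equation*}
so this term can be dropped to the correct side. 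Combined with the identity $\tau(\delta{\bf P}_i - \delta{\bf P}_{i-1}, \delta{\bf P}_i) = \frac{\tau}{2}(\|\delta{\bf P}_i\|_0^2 - \|\delta{\bf P}_{i-1}\|_0^2 + \|\delta{\bf P}_i - \delta{\bf P}_{i-1}\|_0^2)$ and with ${\bf E}_i - {\bf E}_{i-1} = \Delta t\,\delta{\bf E}_i$, one obtains after summing $i=2,\ldots,j$ and applying Young's inequality on the right-hand side
\begin{equation*}
\frac{\tau}{2}\|\delta{\bf P}_j\|_0^2 \leq \frac{\tau}{2}\|\delta{\bf P}_1\|_0^2 + C\sum_{i=2}^j \Delta t\, \|\delta{\bf E}_i\|_0^2 + \frac{1}{2}\sum_{i=2}^j \Delta t\, \|\delta{\bf P}_i\|_0^2.
\end{equation*}
The second term on the right is bounded by $C\,T\,\max_i\|\delta{\bf E}_i\|_0^2$, which is under control by Lemma 3.2. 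A discrete Gr\"onwall inequality applied to the last sum then delivers the claimed uniform bound.

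The main technical obstacle I anticipate is purely bookkeeping: justifying that the monotonicity trick applied to the difference of consecutive equations really produces a \emph{nonnegative} term rather than a sign-indefinite one, and making sure the Young's inequality splitting leaves a clean absorbing coefficient on $\|\delta{\bf P}_j\|_0^2$ so that Gr\"onwall can be invoked. The treatment of the starting value $\|\delta{\bf P}_1\|_0$ (which does not come from a ghost-point definition as in the ${\bf E}$ equation, but directly from the evolution equation at the first step) is the other detail that needs to be handled carefully; fortunately it only requires the $L^2$ bounds on ${\bf E}_1$ and ${\bf P}_1$ already supplied by Lemma 3.1.
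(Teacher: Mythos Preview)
Your argument is correct, but it is more elaborate than what the paper does. The paper simply tests equation (\ref{equation:eq-13}) at step $i$ with $\Psi=\delta{\bf P}_i$, bounds $(f({\bf P}_i),\delta{\bf P}_i)$ via $\|f({\bf P}_i)\|_0\le M\|{\bf P}_i\|_0$ from (\ref{equation:eq-008}), splits the right-hand side with Young's inequality, and absorbs the resulting $\|\delta{\bf P}_i\|_0^2$ terms into the left; the bounds on $\|{\bf P}_i\|_0$ and $\|{\bf E}_i\|_0$ from Lemma~3.1 then give the result directly, for every $i$, with no induction or summation. In fact your base-case treatment of $i=1$ \emph{is} the paper's whole proof, applied at a single index. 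Your difference-of-equations route for $i\ge 2$ works as well, but it invokes more machinery---the monotonicity (\ref{equation:eq-006}), the bound on $\|\delta{\bf E}_i\|_0$ from Lemma~3.2, and a discrete Gr\"onwall step---where the paper needs none of these. The trade-off: your approach exploits the monotone structure of $f$ and would still go through if the growth bound (\ref{equation:eq-008}) were unavailable, while the paper's argument is shorter and depends only on Lemma~3.1.
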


\begin{proof}

For the equation (\ref{equation:eq-13}), substituting ${\bf \Psi}$ by $\delta {\bf P}_i$, we have
\begin{eqnarray*}
\tau(\delta{\bf P}_i,\delta{\bf P}_i)+(f({\bf P}_i),\delta{\bf P}_i)&=&\epsilon_0(\epsilon_s-\epsilon_\infty)({\bf E}_i,\delta{\bf P}_i).
\end{eqnarray*}
 Using Cauchy inequality, Young inequality  and the boundedness of $f$, we have
\begin{eqnarray}
\tau\|\delta{\bf P}_i\|_0^2&\leq&|(f({\bf P}_i),\delta{\bf P}_i)|+\epsilon_0(\epsilon_s-\epsilon_\infty)({\bf E}_i,\delta{\bf P}_i)\nonumber\\
&\leq&\|f({\bf P}_i)\|_0\|\delta{\bf P}_i\|_0+\epsilon_0(\epsilon_s-\epsilon_\infty)\|{\bf E}_i\|_0\|\delta{\bf P}_i\|_0\nonumber\\
\label{equation:eq-22}
&\leq&\frac{M}{4\epsilon_5}\|{\bf P}_i\|_0^2+\epsilon_5\|\delta{\bf P}_i\|_0^2+\frac{\epsilon_0(\epsilon_s-\epsilon_\infty)}{4\epsilon_6}\|{\bf E}_i\|_0^2+\epsilon_0(\epsilon_s-\epsilon_\infty)\epsilon_6\|\delta{\bf P}_i\|_0^2,
\end{eqnarray}
where $\epsilon_5,\epsilon_6$ are constants, which satisfies $\tau-\epsilon_5-\epsilon_6\epsilon_0(\epsilon_s-\epsilon_\infty)>0$. Then by the Lemma 3.1 and Gr\"{o}wall's inequality, we can complete the proof.

\end{proof}

\begin{lemma}
\label{lemma:lem3-4}For $i=1,2,\cdots,n$, there exits a constant $C>0$ such that
\begin{eqnarray}
\label{equation:delta2-bound}
\|\delta^2{\bf E}_i\|_{X^*}\leq C.
\mathcal{}\end{eqnarray}
\end{lemma}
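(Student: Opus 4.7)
The plan is to read the bound on $\delta^2{\bf E}_i$ directly out of the semi-discrete equation (\ref{equation:eq-12}), by treating that equation as defining $\mu_0\epsilon_0\,\delta^2{\bf E}_i$ as an element of $X^*$ and then dualizing. Explicitly, for any test function ${\bf \Phi}\in X$ with $\|{\bf \Phi}\|_X\le 1$, I would rearrange (\ref{equation:eq-12}) as
\begin{eqnarray*}
\mu_0\epsilon_0(\delta^2{\bf E}_i,{\bf \Phi})
&=& -A_1(\delta{\bf E}_i,{\bf \Phi}) - (\nabla\times{\bf E}_i,\nabla\times{\bf \Phi}) + A_2(f'({\bf P}_{i-1}){\bf E}_i,{\bf \Phi}) \\
& & - \frac{\mu_0}{\tau^2}(f'({\bf P}_{i-1})f({\bf P}_{i-1}),{\bf \Phi}) - \mu_0({\bf g}_i,{\bf \Phi}),
\end{eqnarray*}
and then estimate each of the five terms on the right by a constant times $\|{\bf \Phi}\|_X$, using Cauchy-Schwarz together with $\|{\bf \Phi}\|_0\le\|{\bf \Phi}\|_X$ and $\|\nabla\times{\bf \Phi}\|_0\le\|{\bf \Phi}\|_X$.

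The first two terms are controlled by $A_1\|\delta{\bf E}_i\|_0$ and $\|\nabla\times{\bf E}_i\|_0$, both of which are uniformly bounded by Lemma 3.2. The third term uses the pointwise bound $0<f'(x)<B$, giving $A_2 B\|{\bf E}_i\|_0$, uniformly bounded by Lemma 3.1. The fourth term combines $|f'|\le B$ with the growth bound $|f(x)|\le M|x|$ from (\ref{equation:eq-008}), so it is controlled by $\frac{\mu_0}{\tau^2}BM\|{\bf P}_{i-1}\|_0$, again bounded by Lemma 3.1. The last term is controlled by $\mu_0\|{\bf g}_i\|_0$, which is uniformly bounded by the assumption $\|{\bf g}\|_{L(0,T;{\bf L}^2(\Omega))}<\infty$. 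Collecting the five bounds yields a constant $C$ independent of $i$ and $\Delta t$ such that
\begin{equation*}
|\mu_0\epsilon_0(\delta^2{\bf E}_i,{\bf \Phi})|\le C\|{\bf \Phi}\|_X
\qquad \forall\,{\bf \Phi}\in X,
\end{equation*}
which is exactly the conclusion $\|\delta^2{\bf E}_i\|_{X^*}\le C$ after absorbing $\mu_0\epsilon_0$ into $C$.

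There is no genuine technical obstacle here, since the work has effectively already been done in Lemmas 3.1 and 3.2; the only thing to watch is to measure each term in the right dual norm, i.e.\ to apply $\|\cdot\|_0\le\|\cdot\|_X$ for the $L^2$ pairings but keep the curl-curl term as a single pairing in the graph norm rather than trying to move the curl off of ${\bf \Phi}$ (which would require extra regularity on ${\bf E}_i$). One should also note that ${\bf g}_i={\bf g}(\cdot,t_i)$ is a pointwise-in-time evaluation, so in principle the assumption on ${\bf g}$ should be strong enough (continuity in time, or essential supremum) to make $\|{\bf g}_i\|_0$ uniformly bounded; this is implicit in the hypothesis $\|{\bf g}\|_{L(0,T;{\bf L}^2(\Omega))}$ used throughout Section 3.
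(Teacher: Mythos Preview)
Your proposal is correct and follows essentially the same approach as the paper: rearrange the semi-discrete equation (\ref{equation:eq-12}) to isolate $\mu_0\epsilon_0(\delta^2{\bf E}_i,{\bf\Phi})$, estimate each of the five remaining terms by Cauchy--Schwarz against $\|{\bf\Phi}\|_X$, and invoke the uniform bounds from Lemmas~3.1--3.2 together with $|f'|\le B$, $|f|\le M|x|$, and the assumption on ${\bf g}$. Your write-up is in fact more explicit than the paper's about which lemma controls which term and about why the curl--curl pairing must stay in the graph norm.
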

\begin{proof}
From (\ref{equation:eq-12}), and Lemma 3.1-3.3,  we have
\begin{eqnarray}
|\mu_0\epsilon_0(\delta^2{\bf E}_i,{\bf \Phi})|&\leq &
                   |(\nabla\times{\bf E}_i,\nabla\times{\bf \Phi})|
                   +|\frac{\epsilon_0\mu_0(\epsilon_s-\epsilon_\infty)}{\tau}
                   (\delta{\bf E}_i,{\bf \Phi})|
+|\frac{\epsilon_0\mu_0(\epsilon_s-\epsilon_\infty)}{\tau^2}(f'({\bf P}_{i-1}){\bf E}_i,{\bf \Phi})|\nonumber\\
&+&|\frac{\mu_0}{\tau^2}(f'{\bf P}_{i-1})f({\bf P}_{i-1}),{\bf \Phi})|+|\mu_0(\partial_t{\bf J}_s,{\bf \Phi})|\nonumber\\
&\leq & C(\|{\bf \Phi}\|_0^2+\|\nabla\times{\bf \Phi}\|_0^2)^{\frac{1}{2}}.
\end{eqnarray}
By the definition of the operator norm in $ X^{*}$, which is the dual space of $X$, we can finish the proof.

\end{proof}
\begin{lemma}
\label{lemma:lem5}
There exists a positive constant $C>0$ such that
\begin{eqnarray}
\label{equation:delta-bound}
\|\delta{\bf E}_i\|_{X^{*}}\leq C.
\end{eqnarray}
\end{lemma}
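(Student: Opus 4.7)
The plan is to mimic the strategy of Lemma \ref{lemma:lem3-4}: rearrange the semi-discrete equation (\ref{equation:eq-12}) to isolate the $\delta{\bf E}_i$ term, and then bound every remaining contribution by a multiple of $\|\boldsymbol{\Phi}\|_X$ using the stability estimates already established in Lemmas 3.1--3.4.

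Concretely, for any $\boldsymbol{\Phi}\in X$ I would write
\begin{eqnarray*}
A_1(\delta{\bf E}_i,\boldsymbol{\Phi})
 &=& -\mu_0\epsilon_0(\delta^2{\bf E}_i,\boldsymbol{\Phi})
   -(\nabla\times{\bf E}_i,\nabla\times\boldsymbol{\Phi})
   + A_2(f'({\bf P}_{i-1}){\bf E}_i,\boldsymbol{\Phi})\\
 & &-\frac{\mu_0}{\tau^2}(f'({\bf P}_{i-1})f({\bf P}_{i-1}),\boldsymbol{\Phi})
   -\mu_0({\bf g}_i,\boldsymbol{\Phi}).
\end{eqnarray*}
Each term on the right is then controlled individually: the first by $\|\delta^2{\bf E}_i\|_{X^*}\|\boldsymbol{\Phi}\|_X\leq C\|\boldsymbol{\Phi}\|_X$ from Lemma \ref{lemma:lem3-4}; the curl term by $\|\nabla\times{\bf E}_i\|_0\|\nabla\times\boldsymbol{\Phi}\|_0\leq C\|\boldsymbol{\Phi}\|_X$ using the curl bound from Lemma 3.2; the nonlinear terms by the pointwise bounds $|f'|<B$ and $|f(x)|\leq M|x|$ together with the $L^2$-bounds on ${\bf E}_i$ and ${\bf P}_{i-1}$ from Lemma 3.1; and finally $|({\bf g}_i,\boldsymbol{\Phi})|\leq\|{\bf g}_i\|_0\|\boldsymbol{\Phi}\|_0$ using the assumed regularity of the source $\partial_t{\bf J}_s$. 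In each case the bound ends in $C\|\boldsymbol{\Phi}\|_X$ after using $\|\boldsymbol{\Phi}\|_0\leq\|\boldsymbol{\Phi}\|_X$.

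Dividing by $A_1>0$ and taking the supremum over $\boldsymbol{\Phi}\in X$ with $\|\boldsymbol{\Phi}\|_X=1$ yields $\|\delta{\bf E}_i\|_{X^*}\leq C$ by the definition of the dual norm. There is essentially no obstacle here since all the necessary building blocks are in Lemmas 3.1--3.4; the only point of care is to group the $-\mu_0\epsilon_0\delta^2{\bf E}_i$ contribution with the remaining ``data'' terms (as opposed to keeping it on the left), which is precisely what distinguishes this lemma from the previous one and which forces us to invoke Lemma \ref{lemma:lem3-4} in the bound rather than $L^2$ estimates on $\delta^2{\bf E}_i$ directly.
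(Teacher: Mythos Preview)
Your proposal is correct and follows essentially the same route as the paper: rearrange (\ref{equation:eq-12}) to isolate $A_1(\delta{\bf E}_i,{\bf \Phi})$ and bound each remaining term by $C\|{\bf \Phi}\|_X$ using the earlier stability lemmas, then pass to the dual norm. Your explicit use of Lemma~\ref{lemma:lem3-4} to control the $\delta^2{\bf E}_i$ contribution via $\|\delta^2{\bf E}_i\|_{X^*}$ is in fact slightly more careful than the paper's write-up, which records $\|\delta^2{\bf E}_i\|_0$ in the intermediate bound and cites only Lemmas~3.1--3.3.
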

\begin{proof}
From (\ref{equation:eq-12}) we have
\begin{eqnarray*}
&&A_1(\delta{\bf E}_i,{\bf \Phi})=-\mu_0(g_i,{\bf \Phi})-\epsilon_0\mu_0(\delta^2{\bf E}_i,{\bf \Phi})+A_2(f'({\bf P}_{i-1}){\bf E}_i,{\bf \Phi})\\
&&\ \ \ \ \ \ \ \ \ \ \ \ \ \ \ \ \ \ \ \ \ \ -\frac{\mu_0}{\tau^2}(f({\bf P}_{i-1})f'({\bf P}_{i-1}),{\bf \Phi})
-(\nabla\times{\bf E}_i,\nabla\times{\bf \Phi})\\
&&\ \ \ \ \ \ \ \ \ \ \ \ \ \ \ \ \ \ \leq C(\|{\bf g}_i\|_0^2+\|\delta^2{\bf E}_i\|_0^2+\|{\bf E}_i\|_0^2+\|{\bf P}_{i-1}\|_0^2+\|\nabla\times{\bf E}_i\|_0^2)^{\frac{1}{2}}\|{\bf \Phi}\|_{X}.
\end{eqnarray*}
Considering the Lemma 3.1-3.3, we have
\begin{eqnarray*}
(\delta{\bf E}_i,{\bf \Phi})\leq C\|{\bf \Phi}\|_{X},
\end{eqnarray*}
which finishes the proof.
\end{proof}

\subsection{The existence and uniqueness of semi-discretization scheme in time}

We give the existence and uniqueness of the equation (15)- (16) in this subsection.
\begin{theorem}

The weak form $(\ref{equation:eq-12})$ has a unique solution ${\bf E}_i$, for each $1\leq i\leq n$.

\end{theorem}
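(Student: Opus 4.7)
The plan is to recast equation (\ref{equation:eq-12}) as a stationary linear variational problem for ${\bf E}_i$ alone (with ${\bf E}_{i-1}$, ${\bf E}_{i-2}$ and ${\bf P}_{i-1}$ already determined from previous steps) and then invoke the Lax--Milgram lemma on $X = H_0({\rm curl},\Omega)$. Expanding $\delta^2{\bf E}_i = ({\bf E}_i - 2{\bf E}_{i-1} + {\bf E}_{i-2})/\Delta t^2$ and $\delta{\bf E}_i = ({\bf E}_i - {\bf E}_{i-1})/\Delta t$ and moving all terms that depend solely on known data to the right-hand side, (\ref{equation:eq-12}) takes the form $a({\bf E}_i,{\bf \Phi}) = L({\bf \Phi})$ for all ${\bf \Phi}\in X$, where
\begin{equation*}
a({\bf u},{\bf v}) = \alpha({\bf u},{\bf v}) + (\nabla\times{\bf u},\nabla\times{\bf v}) - A_2(f'({\bf P}_{i-1}){\bf u},{\bf v}), \quad \alpha := \frac{\mu_0\epsilon_0}{\Delta t^2} + \frac{A_1}{\Delta t},
\end{equation*}
and $L$ is the linear functional collecting $-\mu_0({\bf g}_i,{\bf \Phi})$, $\frac{\mu_0\epsilon_0}{\Delta t^2}(2{\bf E}_{i-1}-{\bf E}_{i-2},{\bf \Phi})$, $\frac{A_1}{\Delta t}({\bf E}_{i-1},{\bf \Phi})$ and $-\frac{\mu_0}{\tau^2}(f'({\bf P}_{i-1})f({\bf P}_{i-1}),{\bf \Phi})$.

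Continuity of $a$ on $X\times X$ is immediate from Cauchy--Schwarz and the uniform bound $0 < f'<B$, giving $|a({\bf u},{\bf v})| \leq C(\Delta t,B)\|{\bf u}\|_X\|{\bf v}\|_X$. Continuity of $L$ on $X$ is likewise routine: the previous iterates ${\bf E}_{i-1}$, ${\bf E}_{i-2}$ and ${\bf P}_{i-1}$ are elements of $Y$, ${\bf g}_i \in {\bf L}^2(\Omega)$ by the regularity of the source, and the nonlinearity is controlled by (\ref{equation:eq-008}) via $\|f({\bf P}_{i-1})\|_0 \leq M\|{\bf P}_{i-1}\|_0$.

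The main obstacle is coercivity, because the term $-A_2(f'({\bf P}_{i-1}){\bf u},{\bf u})$ is sign-indefinite. I would bound
\begin{equation*}
a({\bf u},{\bf u}) \geq (\alpha - A_2 B)\,\|{\bf u}\|_0^2 + \|\nabla\times{\bf u}\|_0^2,
\end{equation*}
and observe that, using $A_1 = \epsilon_0\mu_0(\epsilon_s-\epsilon_\infty)/\tau$ and $A_2 = \epsilon_0\mu_0(\epsilon_s-\epsilon_\infty)/\tau^2$,
\begin{equation*}
\alpha - A_2 B = \frac{\mu_0\epsilon_0}{\Delta t^2} + \frac{\epsilon_0\mu_0(\epsilon_s-\epsilon_\infty)}{\tau}\left(\frac{1}{\Delta t} - \frac{B}{\tau}\right),
\end{equation*}
which is strictly positive as soon as $\Delta t$ is small enough (e.g. $\Delta t < \tau/B$, or unconditionally once $\Delta t \leq \sqrt{\tau^2/((\epsilon_s-\epsilon_\infty)B)}$, since then the second-order term already dominates). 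Under this mild restriction on the time step $a$ is coercive on $X$, and Lax--Milgram delivers a unique ${\bf E}_i \in X$ solving (\ref{equation:eq-12}). The full statement follows by induction on $i$, initialized by the given ${\bf E}_0$ and the convention ${\bf E}_{-1} := {\bf E}_0$ introduced in Lemma 3.1 to handle the first step.
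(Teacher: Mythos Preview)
Your proof is correct and follows essentially the same approach as the paper: both rewrite (\ref{equation:eq-12}) as a linear variational problem with the bilinear form $a({\bf u},{\bf v}) = \alpha({\bf u},{\bf v}) + (\nabla\times{\bf u},\nabla\times{\bf v}) - A_2(f'({\bf P}_{i-1}){\bf u},{\bf v})$, verify boundedness and coercivity (the latter under a smallness condition on $\Delta t$ ensuring $\alpha - A_2 B > 0$), and conclude via Lax--Milgram. One minor quibble: since $f'>0$ the term $-A_2(f'({\bf P}_{i-1}){\bf u},{\bf u})$ is not sign-indefinite but simply non-positive; the issue is that it competes with the coercive part, exactly as you handle it.
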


\begin{proof}
 Considering the bilinear  form
\begin{eqnarray}
a({\bf E},{\bf \Phi})=\frac{\epsilon_0\mu_0}{\Delta t^2}({\bf E},{\bf\Phi})+(\nabla\times{\bf E},\nabla\times{\bf \Phi})+\frac{\epsilon_0\mu_0(\epsilon_s-\epsilon_\infty)}{\tau\Delta t}({\bf E},{\bf\Phi})-\frac{\epsilon_0\mu_0(\epsilon_s-\epsilon_\infty)}{\tau^2}(f'({\bf P}_{i-1}){\bf E},{\bf \Phi})\ ,
\end{eqnarray}
for any ${\bf E},{\bf \Phi}\in X$.
Using the boundedness of $f'({\bf P})$, we have
\begin{eqnarray*}
&&a({\bf E},{\bf E})=(\nabla\times{\bf E},\nabla\times{\bf E})+\frac{\epsilon_0\mu_0}{\Delta t^2}({\bf E},{\bf E})+\frac{\epsilon_0\mu_0(\epsilon_s-\epsilon_\infty)}{\tau\Delta t}({\bf E},{\bf E})-\frac{\epsilon_0\mu_0(\epsilon_s-\epsilon_\infty)}{\tau^2}(f'({\bf P}_{i-1}){\bf E},{\bf E})\\
&&\ \ \ \ \ \ \ \ \ \ \geq\|\nabla\times{\bf E}\|_0^2+\frac{\epsilon_0\mu_0}{\Delta t^2}\|{\bf E}\|_0^2+\frac{\epsilon_0\mu_0(\epsilon_s-\epsilon_\infty)}{\tau\Delta E}\|{\bf E}\|_0^2-\frac{\epsilon_0\mu_0(\epsilon_s-\epsilon_\infty)}{\tau^2}B\|{\bf E}\|_0^2\\
&&\ \ \ \ \ \ \ \ \ \ =\|\nabla\times{\bf E}\|_0^2+(\frac{\epsilon_0\mu_0}{\Delta t^2}+\frac{\epsilon_0\mu_0(\epsilon_s-\epsilon_\infty)}{\tau\Delta t}-\frac{\epsilon_0\mu_0(\epsilon_s-\epsilon_\infty)}{\tau^2}B)\|{\bf E}\|_0^2\\
&&\ \ \ \ \ \ \ \ \ \ \geq\min\{1,C_0\}(\|\nabla\times{\bf E}\|_0^2+\|{\bf E}\|_0^2),
\end{eqnarray*}
where taking   $0<\Delta t<\frac{\tau(\epsilon_s-\epsilon_\infty)+\tau\sqrt{{(\epsilon_s-\epsilon_\infty)}^2+4B(\epsilon_s-\epsilon_\infty)}} {2B(\epsilon_s-\epsilon_\infty)}$ such that $C_0=(\frac{\epsilon_0\mu_0}{\Delta t^2}+\frac{\epsilon_0\mu_0(\epsilon_s-\epsilon_\infty)}{\tau\Delta t}-\frac{\epsilon_0\mu_0(\epsilon_s-\epsilon_\infty)}{\tau^2}B)>0 $,
which implies that the bilinear form is coercive. It is easy to see that the bilinear form is bounded. According to the Lax-Milgram lemma, we can complete the proof.

\end{proof}

\begin{theorem}
 The weak form $(\ref{equation:eq-13})$ has a unique solution ${\bf P}_i$, for each $1\leq i\leq n$.
\end{theorem}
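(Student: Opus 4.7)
The plan is to recast (\ref{equation:eq-13}) at the $i$-th time step as a single nonlinear operator equation in $Y=\mathbf{L}^2(\Omega)$ and apply the classical Browder--Minty theorem for monotone operators. Since $\delta\mathbf{P}_i=(\mathbf{P}_i-\mathbf{P}_{i-1})/\Delta t$, and since $\mathbf{P}_{i-1}$ is known from the previous step (by induction, with $\mathbf{P}_0$ given) while $\mathbf{E}_i\in X\subset Y$ is produced by Theorem~3.1, the equation reads: find $\mathbf{P}_i\in Y$ such that
\begin{equation*}
\frac{\tau}{\Delta t}(\mathbf{P}_i,\boldsymbol{\Psi})+(f(\mathbf{P}_i),\boldsymbol{\Psi})
=\frac{\tau}{\Delta t}(\mathbf{P}_{i-1},\boldsymbol{\Psi})+\epsilon_0(\epsilon_s-\epsilon_\infty)(\mathbf{E}_i,\boldsymbol{\Psi}),\quad\forall\,\boldsymbol{\Psi}\in Y.
\end{equation*}
Define $\mathcal{A}:Y\to Y^{*}$ by $\langle\mathcal{A}(\mathbf{P}),\boldsymbol{\Psi}\rangle:=\frac{\tau}{\Delta t}(\mathbf{P},\boldsymbol{\Psi})+(f(\mathbf{P}),\boldsymbol{\Psi})$ and let $\mathcal{F}\in Y^{*}$ denote the continuous linear functional on the right-hand side. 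The task then reduces to solving $\mathcal{A}(\mathbf{P}_i)=\mathcal{F}$.

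Next, I would verify the four standard hypotheses of the Browder--Minty framework using the assumptions (\ref{equation:eq-006})--(\ref{equation:eq-008}) on $f$. \textbf{Boundedness} of $\mathcal{A}$ follows from $|f(x)|\leq M|x|$ via Cauchy--Schwarz. \textbf{Strong monotonicity} follows by combining the positive linear part with (\ref{equation:eq-006}): for any $\mathbf{u},\mathbf{v}\in Y$,
\begin{equation*}
\langle\mathcal{A}(\mathbf{u})-\mathcal{A}(\mathbf{v}),\mathbf{u}-\mathbf{v}\rangle\geq\Bigl(\frac{\tau}{\Delta t}+\omega_f\Bigr)\|\mathbf{u}-\mathbf{v}\|_0^{2}.
\end{equation*}
\textbf{Coercivity} follows from $f(0)=0$ together with (\ref{equation:eq-006}) applied at $y=0$, giving $(f(\mathbf{P}),\mathbf{P})\geq\omega_f\|\mathbf{P}\|_0^{2}$, so $\langle\mathcal{A}(\mathbf{P}),\mathbf{P}\rangle/\|\mathbf{P}\|_0\to\infty$ as $\|\mathbf{P}\|_0\to\infty$. \textbf{Hemicontinuity} follows from the Lipschitz bound (\ref{equation:eq-007}): for fixed $\mathbf{u},\mathbf{v},\mathbf{w}\in Y$ the scalar map $t\mapsto\langle\mathcal{A}(\mathbf{u}+t\mathbf{v}),\mathbf{w}\rangle$ is continuous (in fact even Lipschitz). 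The Browder--Minty theorem then yields the existence of $\mathbf{P}_i\in Y$ solving $\mathcal{A}(\mathbf{P}_i)=\mathcal{F}$.

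For uniqueness, I would take two solutions $\mathbf{P}_i^{(1)},\mathbf{P}_i^{(2)}$, subtract the equations, and test with $\boldsymbol{\Psi}=\mathbf{P}_i^{(1)}-\mathbf{P}_i^{(2)}$; the strong monotonicity inequality above forces $\bigl(\frac{\tau}{\Delta t}+\omega_f\bigr)\|\mathbf{P}_i^{(1)}-\mathbf{P}_i^{(2)}\|_0^{2}\leq 0$, which gives $\mathbf{P}_i^{(1)}=\mathbf{P}_i^{(2)}$. I do not anticipate any genuine obstacle here: uniqueness is immediate from strong monotonicity, and the existence step is a direct and clean invocation of Browder--Minty since all four structural hypotheses on $\mathcal{A}$ are inherited verbatim from the standing assumptions (\ref{equation:eq-006})--(\ref{equation:eq-008}) on $f$, with the linear term $\frac{\tau}{\Delta t}(\cdot,\cdot)$ only reinforcing monotonicity and coercivity. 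The only mild subtlety to flag is that the argument takes place in the Hilbert space $Y=\mathbf{L}^2(\Omega)$ rather than in $X$, so no $\mathrm{curl}$ regularity of $\mathbf{P}_i$ is asserted or needed, consistent with Definition~2.1 which only requires $\mathbf{P}\in H^1(0,T;Y)$.
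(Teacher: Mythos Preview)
Your proposal is correct and follows essentially the same approach as the paper: both recast the equation as a nonlinear operator equation on $Y=\mathbf{L}^2(\Omega)$ and invoke the Browder--Minty (Minty--Browder) theorem after verifying boundedness, monotonicity, coercivity, and hemicontinuity from (\ref{equation:eq-006})--(\ref{equation:eq-008}). The only cosmetic differences are that the paper scales the operator by $\Delta t$ (defining $\langle\mathcal{L}\mathbf{u},\mathbf{w}\rangle=(\tau\mathbf{u},\mathbf{w})+\Delta t(f(\mathbf{u}),\mathbf{w})$) and obtains uniqueness directly from the cited theorem rather than writing out the strong-monotonicity subtraction argument explicitly.
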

\begin{proof}
Let $\mathcal L$ be an operator from $Y$ to $Y^*$, where $Y^*$ is the dual space of $Y$, defined as
\begin{eqnarray*}
&&\langle{\bf \mathcal L}{\bf u},{\bf w}\rangle=(\tau{\bf u},{\bf w})+\Delta t(f({\bf u}),{\bf w}),\ \ \ \forall {\bf {u,w}}\in Y.
\end{eqnarray*}
Then the strict monotonicity of $\mathcal L$ comes directly from the monotonicity of the function $f$. That is, for any ${\bf v},\ {\bf w}\in Y$,
\begin{eqnarray}
\label{equation:eq-26}
\langle{\bf {\mathcal L}}{\bf v}-{\bf {\mathcal L}}{\bf w},{\bf v}-{\bf w}\rangle&=&(\tau({\bf v}-{\bf w}),{\bf v}-{\bf w})+\Delta t(f({\bf v})-f({\bf w}),{\bf v}-{\bf w})\geq 0.
\end{eqnarray}
Moreover, from (\ref{equation:eq-008}), we have
\begin{eqnarray}
\label{equation:eq-27}
|\langle{\bf {\mathcal L}}{\bf u},{\bf w}\rangle|&=&(\tau{\bf u},{\bf w})+\Delta t(f({\bf u}),{\bf w})
\leq\tau\|{\bf u}\|_0\|{\bf w}\|_0+\Delta tM\|{\bf u}\|_0\|{\bf w}\|_0,
\end{eqnarray}
which leads to the boundedness of ${\bf {\mathcal L}}$
\begin{eqnarray}
\label{equation:eq-29}
\|{\bf {\mathcal L}}{\bf u}\|_{Y'}&\leq&\tau\|u\|_0+\Delta t M\|{\bf u}\|_0
\leq C\|{\bf u}\|_0.
\end{eqnarray}
 Furthermore, from (\ref{equation:eq-006}), we have
\begin{eqnarray}
\label{equation:eq-30}
\langle{\bf {\mathcal L}}{\bf u},{\bf u}\rangle&=&(\tau{\bf u},{\bf u})+\Delta t(f({\bf u}).{\bf u})
\geq C\|{\bf u}\|_Y,
\end{eqnarray}
which means that $\mathcal L$ is a coercive operator. And in the end we prove that the operator $\mathcal L$ is hemi-continuous, namely,
\begin{eqnarray*}
&&F(t)=\langle{\bf {\mathcal L}}({\bf u}+t{\bf v}),{\bf w}\rangle
\end{eqnarray*}
is continuous on $[0,1]$ for any ${\bf u},{\bf v},{\bf w}\in Y$.

For convenience, we write ${\bf u}(t)={\bf u}+t{\bf v}$, then ${\bf u}(t_0)={\bf u}+t_0{\bf v}\ and\ |{\bf u}|\leq|{\bf u}|+|{\bf v}|,\ {\bf u}(t)-{\bf u}(t_0)=(t-t_0){\bf v}. $
we have
\begin{eqnarray}
|F(t)-F(t_0)|&=&\langle{\bf {\mathcal L}}{\bf u}(t)-{\bf {\mathcal L}}{\bf u}(t_0),{\bf w}\rangle\nonumber\\
&=&(\tau({\bf u}(t)-{\bf u}(t_0)),{\bf w})+\Delta t((f({\bf u}(t))-f({\bf u}(t_0)),{\bf w})\nonumber\\
&\leq&|t-t_0|(\tau{\bf v},{\bf w})+\Delta t(|f({\bf u}(t))-f({\bf u}(t_0))|,{\bf w})\nonumber\\
&\leq&|t-t_0|(\tau{\bf v},{\bf w})+C_L\Delta t(|{\bf u}(t)-{\bf u}(t_0)|,{\bf w})\nonumber\\
\label{equation:eq-31}
&=&|t-t_0|[\tau({\bf v},{\bf w})+C_L\Delta t({\bf v},{\bf w})].
\end{eqnarray}
The above means that the $\mathcal L$ is monotone, bounded, coercive and hemi-continuous from $Y$ to $Y^*$. From the Minty-Browder method, we declare that there exists a unique solution $\ {\bf P}_i$ in $Y$ for the problem (\ref{equation:eq-13})  according to  Theorem 18.2 in \cite{MR028}.

\end{proof}

\subsection{The convergence of semi-discretization in time}

We start with constructing the piecewise-linear or piecewise-constant functions in time
\begin{eqnarray*}
&&\bar{\bf E}_n(t)={\bf E}_i,\ t\in(t_{i-1},t_i],\ \bar{\bf E}_n(0)={\bf E}_n(0)={\bf E}_0,\\
&&{\bf E}_n(t)={\bf E}_{i-1}+(t-t_{i-1})\delta{\bf E}_i,\ t\in(t_{i-1},t_i],\\
&&\widetilde{\bf E}_n(t)=\delta{\bf E}_{i-1}+(t-t_{i-1})\delta(\delta{\bf E}_i),\ t\in(t_{i-1},t_i],\\
&&\bar{\bf P}_n(t)={\bf P}_i,\ t\in(t_{i-1},t_i],\ \bar{\bf P}_n(0)={\bf P}_n(0)={\bf P}_0,\\
&&{\bf P}_n(t)={\bf P}_{i-1}+(t-t_{i-1})\delta{\bf P}_i,\ t\in(t_{i-1},t_i],\\
&&\bar f_n(t)=f({\bf P}_i),\widetilde f_n(t)=f'({\bf P}_i),\bar {\bf g}_n(t)={\bf g}(t_i),t\in(t_{i-1},t_i].
\end{eqnarray*}
Using this notation we are able to rewrite (\ref{equation:eq-12}) and (\ref{equation:eq-13}) as
\begin{eqnarray}
&&\epsilon_0\mu_0(\partial_t\widetilde{\bf E}_n,{\bf \Phi})+A_1(\partial_t{\bf E}_n,{\bf \Phi})-A_2(\widetilde f_n(t-\Delta t)\bar{\bf E}_n,{\bf \Phi})+(\nabla\times\bar{\bf E}_n,\nabla\times{\bf \Phi})\nonumber\\
\label{equation:eq-391}
&&\ \ \ \ \ \ \ \ \ \ \ \ \ \ \ =-\mu_0(\bar {\bf g}_n,{\bf \Phi})-\frac{\mu_0}{\tau^2}(\bar f_n(t-\Delta t)\widetilde f_n(t-\Delta t),{\bf \Phi}),\\
\label{equation:eq-401}
&&\tau(\partial_t{\bf P}_n,{\bf \Psi})+(\bar f_n,{\bf \Psi})=\epsilon_0(\epsilon_s-\epsilon_\infty)(\bar{\bf E}_n,{\bf \Psi}).
\end{eqnarray}


\par Now, we are in a position to prove the convergence of the approximation solutions of (\ref{equation:eq-12}) and (\ref{equation:eq-13}) to the weak solutions of (\ref{equation:eq-10}) and (\ref{equation:eq-11}). The following theorem is the main result of this subsection. Referring to the framework in \cite{MR66}, we divide it into five parts.

\begin{theorem}
Suppose that the ${\bf {Definition}}$ 2.1 holds, and the function $f$ satisfies the (\ref{equation:eq-006})- (\ref{equation:eq-008}), then there exist subsequences of ${\bf E}_n$ and ${\bf P}_n$ such that
\begin{eqnarray*}
&&(A.)\ \ \bar{\bf E}_n(t)\rightarrow{\bf E}(t)\ in\ C^1(0,T; X),\\
&&\hspace{0.75cm}\bar{\bf P}_n(t)\rightharpoonup{\bf P}(t)\ in\ L^2(0,T;Y).\\
&&(B.)\ \ \ \partial_t{\bf E}_n(t)\rightharpoonup\partial_t{\bf E}(t)\ in\ L^2(0,T;X^*),\\
&&\hspace{0.75cm}\partial_t{\bf P}_n(t)\rightharpoonup\partial_t{\bf P}(t)\ in\ L^2(0,T;Y),\\
&&\hspace{0.75cm}\partial_t\widetilde{\bf E}_n(t)\rightharpoonup\partial_{tt}{\bf E}(t)\ in\ L^2(0,T;X^*).\\
&&(C.)\ \ \ \bar f_n(t)\rightharpoonup f({\bf P})\ in\ L^2(0,T;Y).\\
&&(D.)\ \ \ \bar{\bf P}_n\rightarrow{\bf P}\ in\ L^2(0,T;Y),\\
&&\hspace{0.75cm}\bar f_n(t)\rightarrow f({\bf P}),\ in\ L^2(0,T;Y),\\
&&\hspace{0.75cm}\widetilde f_n(t)\rightarrow f'({\bf P}),\ in\ L^2(0,T;Y).\\
&&(E.)\ \ \ {\bf E}\ and\ {\bf B}\ {solve}\ (\ref{equation:eq-10})- (\ref{equation:eq-11}).
\end{eqnarray*}
\end{theorem}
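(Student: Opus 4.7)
The plan is to follow the Rothe method framework of [MR66], addressing the five conclusions in order.

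For parts (A) and (B), I would first collect the uniform bounds: Lemma 3.1 gives $\{\bar{\bf E}_n\}$ and $\{\bar{\bf P}_n\}$ bounded in $L^\infty(0,T;X)$ and $L^\infty(0,T;Y)$; Lemma 3.2 bounds $\{\partial_t{\bf E}_n\}$ in $L^\infty(0,T;Y)$ and $\{\nabla\times{\bf E}_n\}$ in $L^\infty(0,T;{\bf L}^2)$; Lemma 3.3 bounds $\{\partial_t{\bf P}_n\}$ in $L^\infty(0,T;Y)$; Lemmas 3.4 and 3.5 bound $\{\partial_t\widetilde{\bf E}_n\}$ and $\{\partial_t{\bf E}_n\}$ in $L^2(0,T;X^{*})$. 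Banach--Alaoglu then yields a (relabeled) subsequence with the required weak / weak-$*$ limits ${\bf E}\in H^2(0,T;X)$ and ${\bf P}\in H^1(0,T;Y)$. The elementary estimate $\|\bar{\bf E}_n-{\bf E}_n\|_{L^\infty(0,T;Y)}\leq C\Delta t$ forces $\bar{\bf E}_n$ and ${\bf E}_n$ (and similarly the polarization interpolants) to share a common limit, and the identification of $\partial_t{\bf E}$ and $\partial_{tt}{\bf E}$ follows by passing to the limit in the discrete integration-by-parts identities against smooth time-dependent test functions.

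For parts (C) and (D), the strong $L^2(0,T;Y)$ convergence of $\bar{\bf P}_n$ is the linchpin. Combining the $L^\infty(0,T;Y)$ bound on ${\bf P}_n$ with the $L^\infty(0,T;Y)$ bound on $\partial_t{\bf P}_n$ from Lemma 3.3, the Fr\'echet--Kolmogorov (equivalently Aubin--Lions--Simon) criterion yields $\bar{\bf P}_n\to{\bf P}$ strongly in $L^2(0,T;Y)$ along a subsequence. Extracting an a.e.\ subsequence and invoking continuity of $f$ and $f'$ with the uniform bounds $|f({\bf P}_n)|\leq M|{\bf P}_n|$ and $|f'|\leq B$ produces, via Lebesgue dominated convergence, $\bar f_n\to f({\bf P})$ and $\widetilde f_n\to f'({\bf P})$ strongly in $L^2(0,T;Y)$; the weak convergence asserted in (C) is then immediate.

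For part (E), I would pass to the limit in the reformulated scheme (\ref{equation:eq-391})--(\ref{equation:eq-401}) tested against arbitrary $\Phi\in X$ and $\Psi\in Y$. All linear terms converge by the weak limits from Step~1. For each nonlinear coupling I split
\begin{eqnarray*}
\widetilde f_n(t-\Delta t)\bar{\bf E}_n - f'({\bf P}){\bf E}
=\bigl(\widetilde f_n(t-\Delta t)-f'({\bf P})\bigr)\bar{\bf E}_n
+ f'({\bf P})\bigl(\bar{\bf E}_n-{\bf E}\bigr),
\end{eqnarray*}
absorb the time shift $\Delta t$ through an elementary shift estimate, and argue that the first summand vanishes when tested against $\Phi$ by a strong--weak pairing (strong $L^2$ convergence of the scalar factor against the $L^\infty(0,T;Y)$-bounded $\bar{\bf E}_n\cdot\Phi$), while the second vanishes because $f'({\bf P})\Phi\in L^2(0,T;Y)$ is a fixed test element against which $\bar{\bf E}_n\rightharpoonup{\bf E}$. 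The quadratic product $\bar f_n\widetilde f_n$ is handled identically using strong $L^2$ convergence of both factors. The initial conditions are inherited from the uniform convergence of ${\bf E}_n,{\bf P}_n$ at $t=0$.

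The principal obstacle will be the passage to the limit in the $f'({\bf P})\cdot{\bf E}$ coupling, because $X=H_0(curl,\Omega)$ is \emph{not} compactly embedded in $Y=L^2(\Omega)^d$, so no strong $L^2(0,T;Y)$ convergence of $\bar{\bf E}_n$ is a priori available. The decomposition above sidesteps this by demanding strong convergence only of the scalar polarization-dependent factor (which carries the Aubin--Lions compactness coming from Lemma 3.3) and weak convergence of $\bar{\bf E}_n$ paired against the $L^2$ test multiplier $f'({\bf P})\Phi$, which is admissible thanks to the uniform bound $|f'|\leq B$. Carefully tracking the time shift $t-\Delta t$ and the $O(\Delta t)$ discrepancy among $\bar{\bf E}_n$, ${\bf E}_n$, $\widetilde{\bf E}_n$ during these manipulations constitutes the main bookkeeping work.
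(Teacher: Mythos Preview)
Your plan for parts (A), (B) and (E) is close in spirit to the paper's, but there is a genuine gap in your treatment of (C)--(D).

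You propose to obtain the strong convergence $\bar{\bf P}_n\to{\bf P}$ in $L^2(0,T;Y)$ via Aubin--Lions--Simon (or Fr\'echet--Kolmogorov), using only the bounds on ${\bf P}_n$ and $\partial_t{\bf P}_n$ in $L^\infty(0,T;Y)$. This does not work: Aubin--Lions requires a triple $X_0\hookrightarrow X\hookrightarrow X_1$ with the \emph{first} embedding compact, and here the only spatial space in which ${\bf P}_n$ is bounded is $Y={\bf L}^2(\Omega)$ itself. Since the identity $Y\hookrightarrow Y$ is not compact, no strong compactness in $L^2(0,T;Y)$ follows. Equivalently, Fr\'echet--Kolmogorov on $(0,T)\times\Omega$ would demand uniform spatial translation continuity, and the polarization equation $\tau\partial_t{\bf P}+f({\bf P})=\epsilon_0(\epsilon_s-\epsilon_\infty){\bf E}$ is a pointwise ODE with no spatial derivatives, so the a~priori lemmas supply no spatial regularity for ${\bf P}_n$ whatsoever. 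Consequently your derivation of (C), (D), and then the nonlinear limits in (E), collapses.

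The paper circumvents this by the Minty--Browder monotonicity trick. One first extracts a weak limit $f(\bar{\bf P}_n)\rightharpoonup{\bf B}$ in $L^2(0,T;Y)$ from boundedness, then uses the monotonicity inequality
\[
\int_0^T\bigl(f(\bar{\bf P}_n)-f({\bf q}),\,\Phi(\bar{\bf P}_n-{\bf q})\bigr)\,dt\ \geq\ 0
\]
together with a limit identity for $\int_0^T(f(\bar{\bf P}_n),\Phi{\bf P}_n)\,dt$ (obtained from the equation itself) to identify ${\bf B}=f({\bf P})$. Only \emph{after} this identification does the \emph{strong} monotonicity (\ref{equation:eq-006}) upgrade the conclusion to strong convergence $\bar{\bf P}_n\to{\bf P}$ in $L^2(0,T;Y)$, from which the convergence of $\bar f_n$ and $\widetilde f_n$ then follows by Lipschitz continuity. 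You should replace your compactness step with this monotone-operator argument.
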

\begin{proof}
Part A. Thanks to the Lemma 3.1-3.2, we have
\begin{eqnarray*}
\int_0^T\|\partial_t\widetilde{\bf E}_n\|_0^2dt+\int_0^T\|\partial_t{\bf E}_n\|_0^2dt+\max_{t\in[0,T]}(\|\bar{\bf E}_n\|_0^2+\|\nabla\times\bar{\bf E}_n\|_0^2)\leq C.
\end{eqnarray*}
Hence we can apply Lemma 1.3.13 from \cite{MR65} to obtain ${\bf E}\in C^2(0,T;X)\bigcap L_\infty(0,T;X)$ with $\partial_t{\bf E}\in C^1(0,T;X)$ and $\partial_{tt}{\bf E}\in L^2(0,T;X)$ such that
\begin{eqnarray}
\label{equation:eq-677}
\bar{\bf E}_n\rightarrow{\bf E}\ in\ C^1(0,T;X).
\end{eqnarray}
Moreover, considering Lemma 3.1, we have
\begin{eqnarray*}
\int_0^T\|\bar{\bf P}_n\|_0^2dt\leq C,
\end{eqnarray*}
which concludes that the sequence $\bar{\bf P}_n$ is bounded in $L^2(0,T;Y)$. Following the reflexivity of this space, we have
\begin{eqnarray}
\label{equation:eq-667}
&&\bar{\bf P}_n(t)\rightharpoonup{\bf P}(t)\ in\ L^2(0,T;Y).
\end{eqnarray}

Part B. Based on Lemma 3.2, there exists  a ${\bf w}\in L^2(0,T;X)$ such that
\begin{eqnarray}
\label{equation:eq-680}
\int_0^t(\partial_t{\bf E}_n,{\bf \Phi})ds\rightarrow\int_0^t({\bf w},{\bf \Phi})ds\ (n\rightarrow\infty),
\end{eqnarray}
since space $L^2(0,T;X)$ is reflexive.

The sequence ${\bf E}_n$ is equi-bounded in $X^*$
\begin{eqnarray*}
&\ &({\bf E}_n(t),{\bf \Phi})-({\bf E}_n(0),{\bf \Phi})=\int_0^t(\partial_t{\bf E}_n,{\bf \Phi})\\
&\leq&\int_0^t\|\partial_t{\bf E}_n\|_{X^*}\|{\bf \Phi}\|_Xds
\leq C\|{\bf \Phi}\|_X.
\end{eqnarray*}
Hence, we have
\begin{eqnarray*}
({\bf E}_n(t),{\bf \Phi})\leq C\|{\bf \Phi}\|_X+({\bf E}_0,{\bf \Phi})\leq C\|{\bf \Phi}\|_X,
\end{eqnarray*}
which brings us to get
\begin{eqnarray}
\label{equation:eq-681}
\|{\bf E}_n(t)\|_{X^*}\leq C.
\end{eqnarray}

${\bf E}_n$ is also equi-continous. In fact, for any $t_1,t_2\in[0,T]$, the following holds
\begin{eqnarray*}
|({\bf E}_n(t_2)-{\bf E}_n(t_1),{\bf \Phi})|&=& |\int_{t_1}^{t_2}(\partial_t{\bf E}_n,{\bf \Phi})ds|\leq\int_{t_1}^{t_2}\|\partial_t{\bf E}_n\|_{X^*}\|{\bf \Phi}\|_{X}ds\\
&\leq&(\int_{t_1}^{t_2}1^2ds)^{\frac{1}{2}}\cdot(\int_{t_1}^{t_2}\|\partial_t{\bf E}_n\|_{X^*}^2ds)^{\frac{1}{2}}\cdot\|{\bf \Phi}\|_{X}\\
&\leq&C|t_2-t_1|^{\frac{1}{2}}\cdot\|{\bf \Phi}\|_{X}.
\end{eqnarray*}
Then, we have
\begin{eqnarray}
\label{equation:eq-682}
\|{\bf E}_n(t_2)-{\bf E}_n(t_1)\|_{X^*}\leq C|t_2-t_1|^{\frac{1}{2}}.
\end{eqnarray}
From (\ref{equation:eq-681}) and (\ref{equation:eq-682}), using the modification of Arzela-Ascoli theorem (seeing Lemma 1.3.10 of \cite{MR64}), we have
\begin{eqnarray}
\label{equation:eq-683}
\lim_{n\rightarrow\infty}({\bf E}_n(t),{\bf \Phi})=({\bf E}(t),{\bf \Phi}),
\end{eqnarray}
for any ${\bf \Phi}\in X$ and for any $t\in[0,T]$.

Furthermore, from (\ref{equation:eq-683}), we have
\begin{eqnarray}
&&\int_0^t(w,{\bf \Phi})ds=\lim_{n\rightarrow\infty}\int_0^t(\partial_t{\bf E}_n,{\bf \Phi})ds=\lim_{n\rightarrow\infty}({\bf E}_n(t)-{\bf E}_n(0),{\bf \Phi})\nonumber\\
\label{equation:eq-684}
&&\ \ \ \ \ \ \ \ \   \ \  \ \ \ \ \ \ \ \ \ =({\bf E}(t)-{\bf E}(0),{\bf \Phi})=(\int_0^t\partial_t{\bf E}ds,{\bf \Phi}).
\end{eqnarray}
Now, we can conclude that ${\bf w}=\partial_t{\bf E}$ and
\begin{eqnarray}
\label{equation:eq-669}
\int_0^t(\partial_t{\bf E}_n,{\bf \Phi})ds\rightarrow\int_0^t(\partial_t{\bf E},{\bf \Phi})ds,\ \ \ \ in\ L^2(0,T;X).
\end{eqnarray}

Moreover, by the same way, we can conclude that
\begin{eqnarray}
\label{equation:eq-670}
\int_0^t(\partial_t{\bf P}_n,{\bf \Psi})ds\rightarrow\int_0^t(\partial_t{\bf P},{\bf \Psi})ds,\ \ \ \ in\ L^2(0,T;Y).
\end{eqnarray}

From Lemma 3.4, we have
\begin{eqnarray*}
&\ &\int_0^T\|\widetilde{\bf E}_n(t)-\partial_t{\bf E}_n(t)\|_{X^{*}}^2dt=\int_0^T\|\delta{\bf E}_{i-1}+(t-t_{i-1})\delta(\delta{\bf E}_i)-\delta{\bf E}_i\|_{X^{*}}^2dt\\
&\leq&\Delta t^2\int_0^T\|\delta^2{\bf E}_i\|_{X^{*}}^2dt
\leq C\Delta t^2\rightarrow0,
\end{eqnarray*}
which implies $\widetilde{\bf E}_n(t)$ and $\partial_t{\bf E}_n(t)$ share with the same limit and
\begin{eqnarray}
\label{equation:eq-668}
\int_0^t(\widetilde{\bf E}_n(t),{\bf \Phi})ds\rightarrow\int_0^t(\partial_t{\bf E}(t),{\bf \Phi})ds,\ \ \ in\ L^2(0,T;X).
\end{eqnarray}
And then using the same method, we conclude
\begin{eqnarray}
\label{equation:eq-671}
\int_0^t(\partial_t\widetilde{\bf E}_n(t),{\bf \Phi})ds\rightarrow\int_0^t(\partial_{tt}{\bf E}(t),{\bf \Phi})ds,\ \ \ in\ L^2(0,T;X).
\end{eqnarray}
Part C. Due to Lipschitz continuity of the function $f$ and Lemma 3.1, we can write
\begin{eqnarray}
\int_0^T\|f(\bar{\bf P}_n)\|_0^2dt&=&\sum_{i=1}^n\int_\Omega|f({\bf P}_i)|^2dx\Delta t\leq M\sum_{i=1}^n\int_\Omega|{\bf P}_i|^2dx\Delta t\nonumber\\
\label{equation:eq-041}
&\leq&M\sum_{i=1}^n\|{\bf P}_i\|_0^2\Delta t\leq C,
\end{eqnarray}
and
\begin{eqnarray}
\int_0^T\|{\bf P}_n\|_0^2dt&=&\sum_{i=1}^n\int_{t_{i-1}}^{t_i}\|{\bf P}_{i-1}+(t-t_{i-1})\delta{\bf P}_i\|_0^2\nonumber\\
&\leq&\sum_{i=1}^n(\|{\bf P}_{i-1}\|_0^2+\|{\bf P}_i-{\bf P}_{i-1}\|_0^2)\Delta t\nonumber\\
\label{equation:eq-042}
&\leq&\|{\bf P}_0\|_0^2+C\sum_{i=1}^n\|{\bf P}_i\|_0^2\Delta t\leq C.
\end{eqnarray}
There exist functions ${\bf B}$ and ${\bf P}$, for the subsequence of $f(\bar{\bf P}_n)$ and ${\bf P}_n$(still denoted with n) such that $f(\bar{\bf P}_n)\rightharpoonup{\bf B}$ and ${\bf P}_n\rightharpoonup{\bf P}$ in this space, since the space $L^2(0,T;Y)$ is a reflexive Banach space. With all the knowledge, we can invoke Lemma 3.1 in \cite{MR65} to prove that
\begin{eqnarray}
\label{equation:eq-696}
\lim_{n\rightarrow\infty}\int_0^T(f(\bar{\bf P}_n),\Phi{\bf P}_n)dt=\int_0^T({\bf B},\Phi{\bf P})dt,
\end{eqnarray}
for any $\Phi\in C_0^{\infty}(\bar\Omega)$. The technique, which we use in the following part of the proof, is called Minty-Browder, and it is based on the monotone character of the function $f$. We can write
\begin{eqnarray}
\label{equation:eq-685}
\lim_{n\rightarrow\infty}\int_0^T(f(\bar{\bf P}_n)-f({\bf q}),\Phi(\bar{\bf P}_n-{\bf q}))dt\geq0,
\end{eqnarray}
where ${\bf q}\in L^2(0,T;Y)$ is arbitrary and $\Phi\in C_0^{\infty}(\bar\Omega)$ is non-negative.
The basic idea is to split the left term of (\ref{equation:eq-685}) into four terms and then investigate them separately
\begin{eqnarray*}
&&I_1=\int_0^T(f(\bar{\bf P}_n),\Phi\bar{\bf P}_n)dt,\ \ \ \ \ \ I_2=\int_0^T(f({\bf q}),\Phi\bar{\bf P}_n)dt,\\
&&I_3=\int_0^T(f(\bar{\bf P}_n),\Phi{\bf q})dt,\ \ \ \ \ \ \ \
I_4=\int_0^T(f({\bf q}),\Phi{\bf q})dt.
\end{eqnarray*}
We can rewrite the first term as follows
\begin{eqnarray*}
I_1=\int_0^T(f(\bar{\bf P}_n),\Phi(\bar{\bf P}_n-{\bf P}_n))dt+\int_0^T(f(\bar{\bf P}_n),\Phi{\bf P}_n)dt.
\end{eqnarray*}
From (\ref{equation:eq-041}) and (\ref{equation:eq-042}) we have
\begin{eqnarray*}
|\int_0^T(f(\bar{\bf P}_n),\Phi(\bar{\bf P}_n-{\bf P}_n))dt|&\leq& C\int_0^T\|f(\bar{\bf P}_n)\|_0\|\bar{\bf P}_n-{\bf P}_n\|_0dt\\
&\leq&C\int_0^T\|\bar{\bf P}_n-{\bf P}_n\|_0dt\rightarrow0.
\end{eqnarray*}
Hence we can write
\begin{eqnarray}
\label{equation:eq-691}
\lim_{n\rightarrow\infty}I_1=\lim_{n\rightarrow\infty}\int_0^T(f(\bar{\bf P}_n),\Phi{\bf P}_n)dt=\int_0^T({\bf B},\Phi{\bf P})dt.
\end{eqnarray}

The space $L^2(0,T;C^\infty(\Omega))$ is dense in $L^2(0,T;Y)$. Thus for any $\epsilon>0$, there exists $f_\epsilon\in L^2(0,T;C^\infty(\Omega))$ such that $
\|f({\bf q})-f_\epsilon\|_{L^2(0,T;{\bf L}^2(\Omega))}\leq\epsilon.$
Let  us now investigate the following identity
\begin{eqnarray*}
I_2=\int_0^T(f_\epsilon,\Phi(\bar{\bf P}_n-{\bf P}_n))dt+\int_0^T(f({\bf q})-f_\epsilon,\Phi(\bar{\bf P}_n-{\bf P}_n))dt+\int_0^T(f({\bf q}),\Phi{\bf P}_n)dt.
\end{eqnarray*}
Using (\ref{equation:eq-042}) and the statement above, we can bound the first two terms of $I_2$
\begin{eqnarray*}
&\ &|\int_0^T(f_\epsilon,\Phi(\bar{\bf P}_n-{\bf P}_n))dt|\leq C\int_0^T\|f_\epsilon\|_0\|\bar{\bf P}_n-{\bf P}_n\|_0dt\\
&\leq&C_\epsilon\int_0^T\|\bar{\bf P}_n-{\bf P}_n\|_0dt\rightarrow0,
\end{eqnarray*}
\begin{eqnarray*}
|\int_0^T(f({\bf q})-f_\epsilon,\Phi(\bar{\bf P}_n-{\bf P}_n))dt|&\leq&C\int_0^T\|f({\bf q})-f_\epsilon\|_0\|\bar{\bf P}_n-{\bf P}_n\|_0dt\\
&\leq&C_\epsilon\int_0^T\|\bar{\bf P}_n-{\bf P}_n\|_0dt\rightarrow0.
\end{eqnarray*}
From $f({\bf q})\in L^2(0,T;Y)$, we have $\int_0^T(f({\bf q}),\Phi{\bf P}_n)dt\rightarrow\int_0^T(f({\bf q}),\Phi{\bf P})dt$. Therefore, we can pass to the limit
\begin{eqnarray}
\label{equation:eq-692}
\lim_{n\rightarrow\infty}I_2=\lim_{n\rightarrow\infty}\int_0^T(f({\bf q}),\Phi{\bf P}_n)dt=\int_0^T(f({\bf q}),\Phi{\bf P})dt.
\end{eqnarray}

We can easily see that
\begin{eqnarray}
\label{equation:eq-693}
\lim_{n\rightarrow\infty}I_3=\int_0^T({\bf B},\Phi{\bf q})dt,\ \
\lim_{n\rightarrow\infty}I_4=\int_0^T(f({\bf q}),\Phi{\bf q})dt.
\end{eqnarray}
Therefore, gathering all partial results of (\ref{equation:eq-691}), (\ref{equation:eq-692}) and  (\ref{equation:eq-693}), we get
\begin{eqnarray}
\label{equation:eq-694}
\lim_{n\rightarrow\infty}\int_0^T(f(\bar{\bf P}_n)-f({\bf q}),\Phi(\bar{\bf P}_n-{\bf q}))dt
=\int_0^T({\bf B}-f({\bf q}),\Phi({\bf P}-{\bf q}))dt\geq0.
\end{eqnarray}

Now, let ${\bf q}={\bf P}+\epsilon {\bf v}$,  $\forall {\bf v}\in L^2(0,T;Y)$ and $\epsilon>0$, then we have
\begin{eqnarray*}
\int_0^T({\bf B}-f({\bf P}+\epsilon {\bf v}),\Phi {\bf v})dt\leq0.
\end{eqnarray*}
Passing to $\epsilon\rightarrow0$ brings us to
\begin{eqnarray}
\label{equation:eq-695}
\int_0^T({\bf B}-f({\bf P}),\Phi {\bf v})dt\leq0.
\end{eqnarray}
Since the inequality of (\ref{equation:eq-695}) is valid for any ${\bf v}\in L^2(0,T;Y)$, we can replace ${\bf v}$ with $-{\bf v}$ and the reversed inequality also holds. Hence we get
\begin{eqnarray*}
\int_0^T({\bf B}-f({\bf P}),\Phi {\bf v})dt=0,
\end{eqnarray*}
which is true for any ${\bf v}\in L^2(0,T;Y)$ and all non-negative $\Phi\in C_0^\infty(\bar\Omega).$ Then we have ${\bf B}=f({\bf P}),$ a.e. in $\Omega\times(0,T)$. From (\ref{equation:eq-696}), we have
\begin{eqnarray}
\label{equation:eq-673}
\bar f_n(t)\rightharpoonup f({\bf P})\ in\ L^2(0,T;Y).
\end{eqnarray}

Part D. Let $\Phi\in C_0^\infty(\bar\Omega)$ be non-negative. From (\ref{equation:eq-006}), we have
\begin{eqnarray*}
\int_0^T(f(\bar{\bf P}_n)-f({\bf q}),\Phi(\bar{\bf P}_n-{\bf q}))dt\geq\omega_f\int_0^T(\Phi,|\bar{\bf P}_n-{\bf q}|^2)dt\geq0.
\end{eqnarray*}
Setting ${\bf q}={\bf P}$ and combining (\ref{equation:eq-006}), we also have
\begin{eqnarray*}
0=\lim_{n\rightarrow\infty}\int_0^T(f(\bar{\bf P}_n)-f({\bf P}),\Phi(\bar{\bf P}_n-{\bf P}))dt\geq\omega_f\lim_{n\rightarrow\infty}\int_0^T(\Phi,|\bar{\bf P}_n-{\bf P}|^2)dt\geq0.
\end{eqnarray*}
There holds
\begin{eqnarray}
\label{equation:eq-672}
\bar{\bf P}_n\rightarrow{\bf P}\ in\ L^2(0,T;Y),
\end{eqnarray}
since the inequality is valid for any non-negative $\Phi\in C_0^\infty(\bar\Omega)$.
Using (\ref{equation:eq-672}) and the continuous of $f$ and $f'$, we have
\begin{eqnarray}
\label{equation:eq-674}
\bar f_n(t)\rightarrow f({\bf P})\ in\ L^2(0,T;Y),\ \ \ \ \
\label{equation:eq-675}
\widetilde f_n(t)\rightarrow f'({\bf P})\ in\ L^2(0,T;Y).
\end{eqnarray}
Now, using the property of Lipschitz continuity of $f$, let us demonstrate that $\bar f_n(t-\Delta t)$ and $\bar f_n(t)$ share with the same limit in $L^2(0,T;Y)$. Actually, we have
\begin{eqnarray*}
&\ &\int_0^T \|\bar f_n(t-\Delta t)-\bar f_n(t)\|_0^2dt=\sum_{i=1}^n\|f({\bf P}_i)-f({\bf P}_{i-1})\|_0^2\Delta t\\
&\leq&C\sum_{i=1}^n\|{\bf P}_i-{\bf P}_{i-1}\|_0^2\Delta t
=C\Delta t^2\sum_{i=1}^n\|\delta{\bf P}_i\|_0^2\Delta t
\leq C\Delta t^2.
\end{eqnarray*}
Thus we have
\begin{eqnarray}
\label{equation:eq-692}
\lim_{n\rightarrow}\int_0^T\|\bar f_n(t-\Delta t)-\bar f_n(t)\|_0^2dt=0.
\end{eqnarray}
By the same way, using the Lipschitz continuity of $f'$, we can get to that $\widetilde f_n(t-\Delta t)$ and $\widetilde f_n(t)$ share with the same limit.\\
In the last, due to Lipschitz continuity of ${\bf g}$ we have
\begin{eqnarray}
\label{equation:eq-676}
\int_0^T\|\bar {\bf g}_n-{\bf g}\|_0^2=\sum_{i=1}^n\int_{t_{i-1}}^{t_i}\|{\bf g}(t_i)-{\bf g}(t)\|_0^2dt\leq C\Delta t^2\rightarrow0.
\end{eqnarray}

Part E. Taking $\Phi\in C_0^\infty(\bar\Omega)$ in (\ref{equation:eq-391}) and integrating it in time, $\xi\in[0,T]$, we have
\begin{eqnarray*}
&&\int_0^\xi\epsilon_0\mu_0(\partial_t\widetilde{\bf E}_n,{\bf \Phi})+A_1\int_0^\xi(\partial_t{\bf E}_n,{\bf \Phi})-A_2\int_0^\xi(\bar f_n(t-\Delta t)\bar{\bf E}_n,{\bf \Phi})+\int_0^\xi(\nabla\times\bar{\bf E}_n,\nabla\times{\bf \Phi})\\
&&\ \ \ \ \ \ \ \ \ \ \ \ \ \ \ \ \ \ \ \ \ \ =-\mu_0\int_0^\xi(\bar {\bf g}_n,{\bf \Phi})-\frac{\mu_0}{\tau^2}\int_0^\xi(\widetilde f_n(t-\Delta t)\bar f_n(t-\Delta t),{\bf \Phi}).
\end{eqnarray*}
Due to the results of (\ref{equation:eq-677}), (\ref{equation:eq-669}), (\ref{equation:eq-671}), (\ref{equation:eq-674}) and (\ref{equation:eq-676}), we can pass to the limit for $n\rightarrow\infty$ to have
\begin{eqnarray*}
&&\int_0^\xi\epsilon_0\mu_0(\partial_{tt}{\bf E},{\bf \Phi})+A_1\int_0^\xi(\partial_t{\bf E},{\bf \Phi})-A_2\int_0^\xi(f'({\bf P}){\bf E},{\bf \Phi})+\int_0^\xi(\nabla\times{\bf E},\nabla\times{\bf \Phi})\\
&&\ \ \ \ \ \ \ \ \ \ \ \ \ \ \ \ \ \ \ \ \ \ =-\mu_0\int_0^\xi({\bf g},{\bf \Phi})-\frac{\mu_0}{\tau^2}\int_0^\xi(f({\bf P})f'({\bf P}),{\bf \Phi}).
\end{eqnarray*}
Now by the fact that $C_0^\infty(\bar\Omega)$ is dense in $X$, differentiating with respect to time variable, we see that ${\bf E}$ and ${\bf P}$ solve (\ref{equation:eq-10}).

Moreover integrating (\ref{equation:eq-401}) in time, we have
\begin{eqnarray*}
\tau(\bar{\bf P}_n(t),{\bf \Psi})-\tau({\bf P}_n(0),{\bf \Psi})+({\bf P}_n(t)-\bar{\bf P}_n(t),{\bf \Psi})+\int_0^t(\bar f_n,{\bf \Psi})ds=\epsilon_0(\epsilon_s-\epsilon_\infty)\int_0^t(\bar{\bf E}_n,{\bf \Psi})d s.
\end{eqnarray*}
Using the results of (\ref{equation:eq-677}), (\ref{equation:eq-672}), (\ref{equation:eq-674}), and
$\lim\limits_{n\rightarrow\infty}({\bf P}_n(t)-\bar{\bf P}_n(t),{\bf \Psi})=0,$
for every $t\in[0,T],$
hence we can pass to the limit for $n\rightarrow\infty$ to obtain
\begin{eqnarray*}
\tau({\bf P},{\bf \Psi})-\tau({\bf P}_0,{\bf \Psi})+\int_0^t(f({\bf P}),{\bf \Psi})ds=\epsilon_0(\epsilon_s-\epsilon_\infty)\int_0^t({\bf E},{\bf \Psi})ds.
\end{eqnarray*}
Now, differentiating in time shows that ${\bf P}$ and ${\bf E}$ solve (\ref{equation:eq-11}).
\end{proof}


\subsection{ Error  estimates of semi-discretization in time}

Next , we will discuss the error estimates of the semi-discrete problem.
\begin{theorem}
\label{theorem:th4-2}
Assume that the equation (\ref{equation:eq-10}) holds, for any $t\in[0,T]$ we have
\begin{eqnarray}
\label{equation:eq-39}
\max\limits_{t\in[0,T]}\|{\bf E}_n-{\bf E}\|_0^2+\int_0^t\|{\bf E}_n-{\bf E}\|_0^2+\|\int_0^t\nabla\times(\bar{\bf E}_n-{\bf E})\|_0^2\leq C\Delta t^2,
\end{eqnarray}
where C is a positive constant which depends on a series of parameters $\epsilon_0,\mu_0,\epsilon_s,\epsilon_\infty,M$ and $B$.
\end{theorem}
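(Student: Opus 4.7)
The plan is to derive an error equation by subtracting the weak formulation (\ref{equation:eq-10}) from the rewritten semi-discrete form (\ref{equation:eq-391}), and then carry out an energy argument tailored to the three quantities on the left-hand side of (\ref{equation:eq-39}). First I would set $e_n(t) := {\bf E}_n(t)-{\bf E}(t)$ and $\eta_n(t) := \bar{\bf E}_n(t)-{\bf E}(t)$, and record the bookkeeping estimates $\|\bar{\bf E}_n - {\bf E}_n\|_0 \le \Delta t\,\|\delta{\bf E}_i\|_0$ and $\|\widetilde{\bf E}_n - \partial_t{\bf E}_n\|_{X^*} \le C\Delta t$, both consequences of Lemma 3.2 and Lemma \ref{lemma:lem3-4}. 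These let me replace $\widetilde{\bf E}_n,\bar{\bf E}_n$ by $\partial_t{\bf E}_n,{\bf E}_n$ at the cost of an $O(\Delta t)$ residual, which is exactly the rate we are after.

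The key steps, in order, are: (i) integrate the error equation once in $t$ so that the second-order term $\epsilon_0\mu_0(\partial_t\widetilde{\bf E}_n - \partial_{tt}{\bf E})$ becomes a first-order difference; (ii) test with $e_n(t)$ and integrate again from $0$ to $t$, so that the curl contribution $\int_0^t(\nabla\times \eta_n,\nabla\times e_n)\,ds$ rearranges, via the identity $\int_0^t(\partial_s\Phi,\Phi)\,ds = \tfrac{1}{2}\|\Phi(t)\|_0^2$ applied to $\Phi(t)=\int_0^t \nabla\times\eta_n\,ds$, into the third term of (\ref{equation:eq-39}); (iii) handle the nonlinear pieces $\widetilde f_n(t-\Delta t)\bar{\bf E}_n - f'({\bf P}){\bf E}$ and $\widetilde f_n(t-\Delta t)\bar f_n(t-\Delta t) - f'({\bf P})f({\bf P})$ by adding and subtracting mixed products, then bounding them through $0<f'<B$, $|f(x)|\le M|x|$, and the Lipschitz hypotheses (\ref{equation:eq-007})--(\ref{equation:eq-008}); (iv) control the source discrepancy $\bar{\bf g}_n - \partial_t{\bf J}_s$ by $O(\Delta t)$ through the regularity of ${\bf J}_s$; (v) apply Young's inequality so that a small multiple of $\|e_n\|_0^2$ and the curl term can be absorbed on the left, and conclude with Gr\"onwall's inequality.

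The hard part will be the nonlinear terms, because bounding $\widetilde f_n(t-\Delta t)\bar{\bf E}_n - f'({\bf P}){\bf E}$ produces a contribution of the form $C\|\bar{\bf P}_n - {\bf P}\|_0\|{\bf E}\|_\infty$ that couples the electric-field estimate to the polarization error. To close (\ref{equation:eq-39}) with rate $\Delta t^2$, I would run the argument in parallel for (\ref{equation:eq-13}) tested against $\bar{\bf P}_n - {\bf P}$: using the strong monotonicity (\ref{equation:eq-006}), this yields
\begin{equation*}
\tau\|\bar{\bf P}_n-{\bf P}\|_0^2 + \omega_f\int_0^t \|\bar{\bf P}_n-{\bf P}\|_0^2\,ds \le C\Delta t^2 + C\int_0^t\|{\bf E}_n-{\bf E}\|_0^2\,ds,
\end{equation*}
after which one substitutes back into the electric-field estimate. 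This coupled Gr\"onwall is the delicate point, but since both constants are linear in the same quantities, a standard $2\times 2$ Gr\"onwall closes the system and preserves the $O(\Delta t^2)$ rate, yielding (\ref{equation:eq-39}).
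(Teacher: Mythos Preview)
Your overall plan---subtract, integrate once in time, test with the error, integrate again, and close with Young/Gr\"onwall---is exactly the paper's route. There is one concrete slip, however: you test with $e_n={\bf E}_n-{\bf E}$, whereas the paper tests with $\eta_n=\bar{\bf E}_n-{\bf E}$, and this choice matters for the curl term. After the first time-integration the curl contribution reads $\bigl(\int_0^s\nabla\times\eta_n,\nabla\times\Phi\bigr)$; choosing $\Phi=\eta_n(s)$ and integrating in $s$ produces
\[
\int_0^t\Bigl(\int_0^s\nabla\times\eta_n,\nabla\times\eta_n(s)\Bigr)\,ds=\tfrac12\Bigl\|\int_0^t\nabla\times\eta_n\Bigr\|_0^2,
\]
which is the third term of (\ref{equation:eq-39}). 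If instead you take $\Phi=e_n(s)$, the mismatch $\nabla\times({\bf E}_n-\bar{\bf E}_n)$ has to go to the right, and Lemma 3.2 only yields $\int_0^T\|\nabla\times({\bf E}_n-\bar{\bf E}_n)\|_0^2\,ds\le C\Delta t$ (from $\sum_i\|\nabla\times({\bf E}_i-{\bf E}_{i-1})\|_0^2\le C$), which costs a full power of $\Delta t$. The paper sidesteps this by accepting the discrepancy in the \emph{time-derivative} terms instead (its $S_5,S_6$), where the pointwise bound $\|{\bf E}_n-\bar{\bf E}_n\|_0\le C\Delta t$ from Lemma 3.2 is available and preserves the rate. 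Swap $e_n$ for $\eta_n$ in step (ii) and your outline goes through verbatim.

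Your explicit coupled Gr\"onwall with the polarization equation is in fact more careful than the paper: the paper simply asserts $S_2,S_4\le C\Delta t^2$, which tacitly needs $\|\bar{\bf P}_n-{\bf P}\|_0=O(\Delta t)$, i.e.\ Theorem \ref{theorem:th4-3}, whose own proof invokes (\ref{equation:eq-39}). Running the two estimates simultaneously, as you propose, is the clean way to resolve that apparent circularity.
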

\begin{proof}
 Subtracting (\ref{equation:eq-10}) from (\ref{equation:eq-391}) and integrating over $(0,t)$, it yields
\begin{eqnarray*}
&&\epsilon_0\mu_0(\partial_t{\bf E}_n-\partial_t{\bf E},{\bf \Phi})+\frac{\epsilon_0\mu_0(\epsilon_s-\epsilon_\infty)}{\tau}({\bf E}_n-{\bf E}(t),{\bf \Phi})\ \ \ \ \ \ \ \ \ \ \ \ \ \\
&&\ \ \ -\frac{\epsilon_0\mu_0(\epsilon_s-\epsilon_\infty)}{\tau^2}(\int_0^t\widetilde f_n(t-\Delta t)\bar{\bf E}_n-f'({\bf P}){\bf E},{\bf \Phi})+(\int_0^t\nabla\times(\bar{\bf E}_n-{\bf E}),\nabla\times{\bf \Phi})\\
&&=-\mu_0(\int_0^t\bar {\bf g}_n-{\bf g},{\bf \Phi})-\frac{\mu_0}{\tau^2}(\int_0^t\bar f_n(t-\Delta t) \widetilde f_n(t-\Delta t)-f({\bf P})f'({\bf P}),{\bf \Phi})+\epsilon_0\mu_0( \partial_t{\bf E}_n-\widetilde{\bf E}_n,{\bf \Phi}).
\end{eqnarray*}
Setting ${\bf \Phi}=\bar{\bf E}_n-{\bf E}$, and integrating in time again, we have
\begin{eqnarray}
&&\frac{\epsilon_0\mu_0}{2}\|{\bf E}_n-{\bf E}\|_0^2+\frac{\epsilon_0\mu_0(\epsilon_s-\epsilon_\infty)}{\tau}\int_0^t\|{\bf E}_n-{\bf E}\|_0^2+\|\int_0^t\nabla\times(\bar{\bf E}_n-{\bf E})\|_0^2\nonumber\\
&&\leq\mu_0|\int_0^t(\int_0^s\bar {\bf g}_n-{\bf g},\bar{\bf E}_n-{\bf E})|+\frac{\mu_0}{\tau^2}|\int_0^t(\int_0^s\bar f_n(t-\Delta t)\widetilde f_n(t-\Delta t)-f({\bf P})f'({\bf P}),\bar{\bf E}_n-{\bf E})|\nonumber\\
&&\ \ \ \ +\epsilon_0\mu_0\int_0^t(\partial_t{\bf E}_n-\widetilde{\bf E}_n,\bar{\bf E}_n-{\bf E})+\frac{\epsilon_0\mu_0(\epsilon_s-\epsilon_\infty)}{\tau^2}\int_0^t(\int_0^s
\widetilde f_n(t-\Delta t)\bar{\bf E}_n-f'({\bf P}){\bf E},\bar{\bf E}_n-{\bf E})\nonumber\\
&&\ \ \ \ +\epsilon_0\mu_0\int_0^t(\partial_t{\bf E}_n-\partial_t{\bf E},{\bf E}_n-\bar{\bf E}_n)+\frac{\epsilon_0\mu_0(\epsilon_s-\epsilon_\infty)}{\tau}\int_0^t({\bf E}_n-{\bf E},{\bf E}_n-\bar{\bf E}_n)\nonumber\\
\label{equation:eq-40}
&&=\sum_{i=1}^6S_i.
\end{eqnarray}
Using Young's inequality, Lemma 3.1-Lemma 3.4 and (\ref{equation:eq-006})-(\ref{equation:eq-008}), to deal with the each terms $S_i,i=1,2,\cdots, 6$, we have
\begin{eqnarray*}
&&S_1=\mu_0|\int_0^t(\int_0^s\bar {\bf g}_n-{\bf g},\bar{\bf E}_n-{\bf E})|\leq\mu_0(\int_0^t\|\int_0^s\bar {\bf g}_n-{\bf g}\|_0^2)^{\frac{1}{2}}\cdot(\int_0^t\| \bar{\bf E}_n-{\bf E}\|_0^2)^{\frac{1}{2}}\leq C\Delta t^2,\\
&&S_2=\frac{\mu_0}{\tau^2}|\int_0^t(\int_0^s\bar f_n(t-\Delta t)\widetilde f_n(t-\Delta t)-f({\bf P})f'({\bf P}),\bar{\bf E}_n-{\bf E})|\\
&&\ \ \ \ \leq\frac{\mu_0}{\tau^2}(\int_0^t\|\int_0^s \widetilde f_n(t-\Delta t)(\bar f_n(t-\Delta t)-f({\bf P}))\|_0^2)^{\frac{1}{2}}\cdot(\int_0^t\|\bar{\bf E}_n-{\bf E}\|_0^2)^{\frac{1}{2}}\\
&&\ \ \ \ \ \ \ +\frac{\mu_0}{\tau^2}(\int_0^t(\|\int_0^sf({\bf P})(\widetilde f_n(t-\Delta t)-f'({\bf P}))\|_0^2)^{\frac{1}{2}}\cdot(\int_0^t\|\bar{\bf E}_n-{\bf E}\|_0^2)^{\frac{1}{2}}\leq C\Delta t^2,\\
&&S_3=\epsilon_0\mu_0\int_0^t(\partial_t{\bf E}_n-\widetilde{\bf E}_n,\bar{\bf E}_n-{\bf E})\leq\epsilon_0\mu_0(\int_0^t\| \partial_t{\bf E}_n-\widetilde{\bf E}_n\|_0^2)^{\frac{1}{2}}\cdot(\int_0^t\|\bar{\bf E}_n-{\bf E}\|_0^2)^{\frac{1}{2}}\leq C\Delta t^2,\\
&&S_4=\frac{\epsilon_0\mu_0(\epsilon_s-\epsilon_\infty)}{\tau^2}\int_0^t(\int_0^s\widetilde f_n(t-\Delta t) \bar{\bf E}_n-f'({\bf P}){\bf E},\bar{\bf E}_n-{\bf E})\\
&&\ \ \ \ \leq\frac{\epsilon_0\mu_0(\epsilon_s-\epsilon_\infty)}{\tau^2}(\int_0^t(\|\int_0^s\bar{\bf E}_n(\widetilde f_n(t-\Delta t)-f'({\bf P}))\|_0^2)^{\frac{1}{2}}\cdot(\int_0^t\|\bar{\bf E}_n-{\bf E}\|_0^2)^{\frac{1}{2}}\\
&&\ \ \ \ \ \ \ +\frac{\epsilon_0\mu_0(\epsilon_s-\epsilon_\infty)}{\tau^2}(\int_0^t\|\int_0^sf'({\bf P})(\bar{\bf E}_n-{\bf E})\|_0^2)^{\frac{1}{2}}\cdot(\int_0^t\|\bar{\bf E}_n-{\bf E}\|_0^2)^{\frac{1}{2}}\leq C\Delta t^2\ ,\\
&&S_5=\epsilon_0\mu_0\int_0^t(\partial_t{\bf E}_n-\partial_t{\bf E},{\bf E}_n-\bar{\bf E}_n) \leq\epsilon_0\mu_0(\int_0^t\|\partial_t{\bf E}_n-\partial_t{\bf E}\|_0^2)^{\frac{1}{2}}\cdot(\int_0^t\|{\bf E}_n-\bar{\bf E}_n\|_0^2)^{\frac{1}{2}}\leq C\Delta t^2,\\
&&S_6=\frac{\epsilon_0\mu_0(\epsilon_s-\epsilon_\infty)}{\tau^2}\int_0^t({\bf E}_n-{\bf E},{\bf E}_n-\bar{\bf E}_n) \leq\frac{\epsilon_0\mu_0(\epsilon_s-\epsilon_\infty)}{\tau^2}(\int_0^t\|{\bf E}_n-{\bf E}\|_0^2)^{\frac{1}{2}}\cdot(\int_0^t\|{\bf E}_n-\bar{\bf E}_n\|_0^2)^{\frac{1}{2}}\leq C\Delta t^2,
\end{eqnarray*}
which completes the proof by the formula (\ref{equation:eq-40}).
\end{proof}

\begin{theorem}
\label{theorem:th4-3}
Assume that the equation (\ref{equation:eq-11}) holds, we have
\begin{eqnarray}
\label{equation:eq-41}
&&\max\limits_{t\in[0,T]}\|{\bf P}_n(t)-{\bf P}(t)\|_0\leq C\Delta t,
\end{eqnarray}
where C is positive constant.
\end{theorem}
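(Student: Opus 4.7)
The plan is to form the error equation by subtracting the continuous weak form (14) from the time-discrete identity (42), test with $\mathbf{P}_n - \mathbf{P}$, and exploit the strong monotonicity of $f$ together with the energy estimates already derived. The key subtlety is the mismatch between the piecewise-constant argument $\bar{\mathbf{P}}_n$ appearing inside $f$ and the piecewise-linear function $\mathbf{P}_n$ used for testing.

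First I would subtract (\ref{equation:eq-11}) from (\ref{equation:eq-401}) to obtain, for every $\Psi\in Y$,
\begin{eqnarray*}
\tau(\partial_t(\mathbf{P}_n-\mathbf{P}),\Psi)+(f(\bar{\mathbf{P}}_n)-f(\mathbf{P}),\Psi)=\epsilon_0(\epsilon_s-\epsilon_\infty)(\bar{\mathbf{E}}_n-\mathbf{E},\Psi).
\end{eqnarray*}
Choosing $\Psi=\mathbf{P}_n-\mathbf{P}$, integrating in time from $0$ to $t$, and using $\mathbf{P}_n(0)=\mathbf{P}_0=\mathbf{P}(0)$, the left-hand side becomes
\begin{eqnarray*}
\frac{\tau}{2}\|\mathbf{P}_n(t)-\mathbf{P}(t)\|_0^2+\int_0^t(f(\bar{\mathbf{P}}_n)-f(\mathbf{P}),\mathbf{P}_n-\mathbf{P})\,ds.
\end{eqnarray*}

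The critical step is to handle the monotonicity term by splitting
\begin{eqnarray*}
(f(\bar{\mathbf{P}}_n)-f(\mathbf{P}),\mathbf{P}_n-\mathbf{P})=(f(\mathbf{P}_n)-f(\mathbf{P}),\mathbf{P}_n-\mathbf{P})+(f(\bar{\mathbf{P}}_n)-f(\mathbf{P}_n),\mathbf{P}_n-\mathbf{P}).
\end{eqnarray*}
By the strong monotonicity (\ref{equation:eq-006}), the first piece is bounded below by $\omega_f\|\mathbf{P}_n-\mathbf{P}\|_0^2\geq 0$. For the second piece, Lipschitz continuity (\ref{equation:eq-007}) together with the identity $\bar{\mathbf{P}}_n(t)-\mathbf{P}_n(t)=(t_i-t)\delta\mathbf{P}_i$ on $(t_{i-1},t_i]$ gives
\begin{eqnarray*}
\|\bar{\mathbf{P}}_n(t)-\mathbf{P}_n(t)\|_0\leq \Delta t\,\|\delta\mathbf{P}_i\|_0\leq C\Delta t
\end{eqnarray*}
by Lemma 3.3, so this term contributes $O(\Delta t)\cdot\|\mathbf{P}_n-\mathbf{P}\|_0$, which after Young's inequality is absorbed into $C\Delta t^2$ plus a small multiple of $\|\mathbf{P}_n-\mathbf{P}\|_0^2$.

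For the right-hand side I would split $\bar{\mathbf{E}}_n-\mathbf{E}=(\bar{\mathbf{E}}_n-\mathbf{E}_n)+(\mathbf{E}_n-\mathbf{E})$. The first summand satisfies $\|\bar{\mathbf{E}}_n-\mathbf{E}_n\|_0\leq\Delta t\,\|\delta\mathbf{E}_i\|_0\leq C\Delta t$ by Lemma 3.2, while the second contributes $\int_0^T\|\mathbf{E}_n-\mathbf{E}\|_0^2\,dt\leq C\Delta t^2$ directly from Theorem \ref{theorem:th4-2}. Cauchy--Schwarz and Young's inequality therefore yield
\begin{eqnarray*}
\|\mathbf{P}_n(t)-\mathbf{P}(t)\|_0^2\leq C\Delta t^2+C\int_0^t\|\mathbf{P}_n-\mathbf{P}\|_0^2\,ds,
\end{eqnarray*}
and the continuous Grönwall inequality closes the argument.

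The main obstacle is the $f(\bar{\mathbf{P}}_n)$ versus $f(\mathbf{P}_n)$ mismatch addressed by the splitting above; everything else reduces to routine Lipschitz/Young manipulations combined with the uniform bounds from Lemmas 3.1--3.3 and the $O(\Delta t)$ time error for $\mathbf{E}$ from Theorem \ref{theorem:th4-2}.
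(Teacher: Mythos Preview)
Your argument is correct and follows essentially the same energy-error strategy as the paper: subtract the continuous equation from the discrete one, test with the error, exploit the monotonicity of $f$, and invoke the $O(\Delta t)$ bounds from Lemmas~3.2--3.3 and Theorem~\ref{theorem:th4-2}. The only cosmetic difference is the choice of test function: the paper tests with $\bar{\mathbf{P}}_n-\mathbf{P}$, so that $(f(\bar{\mathbf{P}}_n)-f(\mathbf{P}),\bar{\mathbf{P}}_n-\mathbf{P})\geq 0$ holds without splitting and the $\bar{\mathbf{P}}_n$ versus $\mathbf{P}_n$ mismatch surfaces instead in the time-derivative term $\tau\int_0^t(\partial_t\mathbf{P}_n-\partial_t\mathbf{P},\mathbf{P}_n-\bar{\mathbf{P}}_n)$, whereas you test with $\mathbf{P}_n-\mathbf{P}$ and push the mismatch into the nonlinear term via Lipschitz continuity, closing with Gr\"onwall; both routes use the same ingredients and yield the same bound.
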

\begin{proof}
 Subtracting $(\ref{equation:eq-11})$ from $(\ref{equation:eq-401})$, and setting ${\bf \Psi}=\bar{\bf P}_n-{\bf P}$, it yields
\begin{eqnarray*}
(\tau\partial_t{\bf P}_n+\bar f_n-(\tau\partial_t{\bf P}+f({\bf P})),\bar{\bf P}_n-{\bf P})=\epsilon_0(\epsilon_s-\epsilon_\infty)(\bar{\bf E}_n-{\bf E},\bar{\bf P}_n-{\bf P}).
\end{eqnarray*}
Integrating in time $(0,t)$, we have
\begin{eqnarray*}
&&\tau({\bf P}_n-{\bf P},{\bf P}_n-{\bf P})+\int_0^t(\bar f_n-f({\bf P}),\bar{\bf P}_n-{\bf P})\\
&=&\epsilon_0(\epsilon_s-\epsilon_\infty)\int_0^t(\bar{\bf E}_n-{\bf E},\bar{\bf P}_n-{\bf P})+\tau\int_0^t(\partial_t{\bf P}_n-\partial_t{\bf P},{\bf P}_n-\bar{\bf P}_n).
\end{eqnarray*}
Employing the monotonicity of $f$ and Lemma 3.3, we have
\begin{eqnarray*}
&\ &\tau\|{\bf P}_n-{\bf P}\|_0^2\leq\epsilon_0(\epsilon_s-\epsilon_\infty)(\int_0^t\|\bar{\bf E}_n-{\bf E}\|_0^2)^{\frac{1}{2}}(\int_0^t\|\bar{\bf P}_n-{\bf P}\|_0^2)^{\frac{1}{2}}\\
&+&\tau(\int_0^t\|\partial_t{\bf P}_n-\partial_t{\bf P}\|_0^2)^{\frac{1}{2}}\Delta t(\int_0^t\|\partial t{\bf P}_n\|_0^2)^{\frac{1}{2}}\leq C\Delta t^2\ ,
\end{eqnarray*}
which completes the proof of this step.
\end{proof}

\section{Error estimates of full discretization scheme}
Now, we focus on the investigation of the fully discrete scheme of the equations (\ref{equation:eq-10})-(\ref{equation:eq-11}). Let $\mathcal T^h$ be the standard cubic partitions of $\Omega$ with size $h$. We consider the  Raviart-Thomas-N$\acute{e}$d$\acute{e}$lec element space in three-dimension
\begin{eqnarray*}
&&N_h:=\{ {\bf \Phi}\in {\bf H}(curl,\Omega),{\bf \phi}|_K\in Q_{k-1,k,k}\times Q_{k,k-1,k}\times Q_{k,k,k-1}, \forall K\in \mathcal{T}_h  \},\\
&&W_h:=\{ {\bf \Psi}\in [L^2(\Omega)]^3,{\bf \psi}|_K\in Q_{k,k-1,k-1}\times Q_{k-1,k,k-1}\times Q_{k-1,k-1,k}, \forall K\in \mathcal{T}_h  \}.
\end{eqnarray*}

Denote $\Pi^h$  as the interpolation operator  on $N_h$  and $ \pi_h$ as the interpolation operator on $W_h$. The interpolation error estimates are given by the following lemma in \cite{MR030,MR031}.

\begin{lemma}
\label{lemma:lem4-1}
Assume that $\frac{1}{2}<\delta<1$, and $\mathcal{T}^h$ is a regular family of mesh on $\Omega $ with faces aligning with the coordinate axes. Then if ${
\bf u,v}\in [H^s(\Omega)]^d, 0<d\leq 3,\frac{1}{2}+\delta <s\leq k$, there is a constant $C>0$ independent of $h$ and ${\bf u} $ such that
\begin{eqnarray*}
&&\|{\bf u}-\Pi^h{\bf u}\|_0+\|\nabla\times({\bf u}-\Pi^h{\bf u})\|_0\leq Ch^s(\|{\bf u}\|_{H^s(\Omega)}+\|\nabla\times{\bf u}\|_{H^s(\Omega)}),\\ &&\|{\bf v}-\pi_h{\bf v}\|_0\leq Ch^s\|{\bf v}\|_{[H^s(\Omega)]^d}.
\end{eqnarray*}

\end{lemma}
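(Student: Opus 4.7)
The plan is to establish both estimates by the classical scaling/reference-element argument combined with the Bramble--Hilbert lemma, exploiting the commuting diagram between the two interpolants. I will treat $\Pi^h$ first and then indicate how $\pi_h$ follows by an entirely parallel route.

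First I would reduce the problem to the reference cube $\widehat K=[0,1]^3$. Using the covariant (Piola-type) mapping $\widehat{\bf u}({\bf \widehat x})=B_K^\top{\bf u}(F_K({\bf \widehat x}))$ appropriate to $H(\mathrm{curl})$-conforming elements, the Nédélec degrees of freedom (edge, face, and interior moments of ${\bf u}$ against polynomials) pull back consistently, so $\widehat{\Pi}\widehat{\bf u}=\widehat{\Pi^h{\bf u}}$ on $\widehat K$. The threshold $s>\tfrac12+\delta$ is exactly what is needed to make every edge and face moment a continuous functional on $H^s(\widehat K)$: by the usual trace theorem $H^s(\widehat K)\hookrightarrow L^2(\widehat e)$ for $s>\tfrac12$, and one needs a small additional $\delta$ to handle the tangential component on edges of codimension 2. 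Hence $\widehat\Pi:H^s(\widehat K)\to \mathcal P_k(\widehat K)$ is a bounded linear projector onto the polynomial space that contains $\mathcal P_{k-1}$.

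Next I would invoke the Bramble--Hilbert lemma on $\widehat K$: since $\widehat\Pi$ preserves polynomials of degree $k-1$ and is bounded on $H^s(\widehat K)$, one has
\begin{equation*}
\|\widehat{\bf u}-\widehat\Pi\widehat{\bf u}\|_{L^2(\widehat K)}+\|\widehat\nabla\times(\widehat{\bf u}-\widehat\Pi\widehat{\bf u})\|_{L^2(\widehat K)}
\le C\bigl(|\widehat{\bf u}|_{H^s(\widehat K)}+|\widehat\nabla\times\widehat{\bf u}|_{H^s(\widehat K)}\bigr).
\end{equation*}
Transforming back to $K\in\mathcal T^h$ via the standard scaling estimates $\|{\bf u}\|_{L^2(K)}\sim h_K^{3/2}\|B_K^{-\top}\widehat{\bf u}\|_{L^2(\widehat K)}$ and the analogous estimate for $\nabla\times$ through Piola, the powers of $h_K$ combine to yield the factor $h_K^s$. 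Summing over all $K\in\mathcal T^h$ and using the shape-regularity of $\mathcal T^h$ produces the global estimate for $\Pi^h$.

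For $\pi_h$ the argument is the same with the Piola transform $\widehat{\bf v}({\bf \widehat x})=\det(B_K)B_K^{-1}{\bf v}(F_K({\bf \widehat x}))$ appropriate for $H(\mathrm{div})$-type/face elements; the face moments require $s>\tfrac12$ for continuity, the Bramble--Hilbert lemma applies because $\pi_h$ is a projector preserving $\mathcal P_{k-1}$, and scaling delivers the factor $h^s$ in the $L^2$-norm. The only real obstacle in both arguments is making the degrees of freedom rigorously well-defined at the regularity $s$ claimed; this is exactly the role of the assumption $\tfrac12+\delta<s\le k$, and it is the reason the estimates cannot be pushed below $s=\tfrac12$. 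The rest of the proof is a direct consequence of the approximation theory of Nédélec and Raviart--Thomas elements as in \cite{MR030,MR031}.
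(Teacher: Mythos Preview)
Your sketch is the standard reference-element/Bramble--Hilbert argument and is essentially correct as an outline of how such estimates are proved. However, you should be aware that the paper does not actually prove this lemma at all: it is stated as a quotation of known results, with the sentence ``The interpolation error estimates are given by the following lemma in \cite{MR030,MR031}'' immediately preceding it, and no proof is supplied. So there is nothing to compare your argument against in the paper itself; what you have written is a reasonable reconstruction of the proof that the cited references contain.

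One small caveat on your write-up: for the $\pi_h$ estimate in this paper, $W_h$ is used merely as an $L^2$-conforming space (note $W_h\subset [L^2(\Omega)]^3$, not $H(\mathrm{div})$), and the interpolation $\pi_h$ here is effectively an $L^2$-type projection/interpolation rather than the full $H(\mathrm{div})$ Raviart--Thomas interpolant with face moments. So invoking the contravariant Piola transform and face-moment continuity is more machinery than is strictly needed; a direct Bramble--Hilbert argument in $L^2$ suffices for the second estimate. This does not affect correctness, only economy.
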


Now, we can establish the full discrete finite element approximation to the equations (\ref{equation:eq-10})- (\ref{equation:eq-11}) as follows:
find $({\bf E}_i^h,{\bf P}_i^h)\in N_h\times W_h$ such that
\begin{eqnarray}
\epsilon_0\mu_0(\delta^2{\bf E}_i^h,{\bf \Phi})+\frac{\epsilon_0\mu_0(\epsilon_s-\epsilon_\infty)}{\tau}(\delta{\bf E}_i^h,{\bf \Phi})&+&(\nabla\times{\bf E}_i^h,\nabla\times{\bf \Phi}^h))-\frac{\epsilon_0\mu_0(\epsilon_s-\epsilon_\infty)}{\tau^2}(f'({\bf P}_{i-1}^h){\bf E}_i^h,{\bf \Phi}^h)\nonumber\\
\label{equation:eq-43}
&+&\frac{\mu_0}{\tau^2}(f'({\bf P}_{i-1}^h)f({\bf P}_{i-1}^h),{\bf \Phi}^h)=-\mu_0({\bf g}_i^h,{\bf \Phi}^h), \ \ \forall {\bf \Phi}^h\in N_h,\\
\label{equation:eq-44}
\tau(\delta{\bf P}_i^h,{\bf \Psi}^h)&+&(f({\bf P}_i^h),{\bf \Psi}^h)=\epsilon_0(\epsilon_s-\epsilon_\infty)({\bf E}_i^h,{\bf \Psi}^h), \ \hspace{0.85cm}\forall {\bf \Psi}^h\in W_h,\\
{\bf E}_0^h&=&\Pi^h{\bf E}_0,\ \ {\bf P}_0^h=\pi_h{\bf P}_0,\ \
\partial_t{\bf E}({\bf x},0)=\delta\Pi^h{\bf E}_0'.\nonumber
\end{eqnarray}


The existence and uniqueness of the solution of the equation (\ref{equation:eq-43}) at each time step is similar to Theorem 3.1 and the solvability of the equation (\ref{equation:eq-44}) can be follows the lemma 6.1.1 in \cite{MR029}.

\begin{lemma}
\label{lemma:lem5-1}
 Let $G:x\longrightarrow G(x) $ be a continuous mapping ${\bf R}^n$ in itself such that for a suitable $\rho>0$
\begin{eqnarray*}
(G(x),x)_{{\bf R}^n}>0,\ \ \forall x: |x|_{{\bf R}^n}=\rho.
\end{eqnarray*}
 then we can find a $x_0$ from the ball $|x|_{{\bf R}^n}\leq \rho$ such that
  $ G(x_0)=0$.
\end{lemma}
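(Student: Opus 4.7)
The plan is to argue by contradiction using Brouwer's fixed point theorem on the closed ball $\bar{B}_\rho = \{x \in \mathbb{R}^n : |x|_{\mathbb{R}^n} \leq \rho\}$. Suppose, contrary to the conclusion, that $G(x) \neq 0$ for every $x \in \bar{B}_\rho$. Then $|G(x)|_{\mathbb{R}^n}$ is continuous and strictly positive on the compact set $\bar{B}_\rho$, so the map
\begin{equation*}
F(x) := -\rho\,\frac{G(x)}{|G(x)|_{\mathbb{R}^n}}
\end{equation*}
is well-defined and continuous from $\bar{B}_\rho$ into itself (in fact into the sphere of radius $\rho$).

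Next I would apply Brouwer's fixed point theorem to $F$: since $\bar{B}_\rho$ is homeomorphic to the closed unit ball in $\mathbb{R}^n$ and $F$ is continuous, there exists $x_0 \in \bar{B}_\rho$ with $F(x_0) = x_0$. Because $|F(x_0)|_{\mathbb{R}^n} = \rho$, this fixed point lies on the boundary sphere $|x_0|_{\mathbb{R}^n} = \rho$, which is exactly where the hypothesis $(G(x),x)_{\mathbb{R}^n} > 0$ is assumed to hold.

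The contradiction then comes from computing the inner product $(G(x_0), x_0)_{\mathbb{R}^n}$ directly. Using $x_0 = F(x_0) = -\rho G(x_0)/|G(x_0)|_{\mathbb{R}^n}$, one obtains
\begin{equation*}
(G(x_0), x_0)_{\mathbb{R}^n} \;=\; \Bigl(G(x_0),\, -\rho\tfrac{G(x_0)}{|G(x_0)|_{\mathbb{R}^n}}\Bigr)_{\mathbb{R}^n} \;=\; -\rho\,|G(x_0)|_{\mathbb{R}^n} \;<\; 0,
\end{equation*}
which contradicts the standing hypothesis on the sphere $|x|_{\mathbb{R}^n} = \rho$. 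Hence the assumption $G \neq 0$ on $\bar{B}_\rho$ is false, and some $x_0 \in \bar{B}_\rho$ must satisfy $G(x_0) = 0$.

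There is no real obstacle here beyond invoking Brouwer's theorem; the only point requiring a little care is to verify that $F$ maps $\bar{B}_\rho$ continuously into itself, which depends crucially on $G$ being nonvanishing on the whole closed ball (so that division by $|G(x)|_{\mathbb{R}^n}$ is legitimate and continuous). Once that is in place, the sign computation on the boundary sphere closes the argument immediately.
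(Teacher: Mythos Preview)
Your proof is correct and is the standard argument for this result (often called the acute-angle lemma). Note, however, that the paper does not actually prove this lemma: it simply states it and cites it as Lemma~6.1.1 in Shaidurov's book \cite{MR029}. So there is nothing to compare against in the paper itself; you have supplied a full proof where the authors only give a reference. Your Brouwer-based contradiction argument is exactly the classical route to this statement and would be entirely appropriate here.
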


The following Lemma cited from Theorem 6.1 in [20] is to show the uniqueness for the nonlinear equation.
\begin{lemma}
\label{lemma:th5-1}
Assume that the form $\mathcal {L}$ is strong-monotone, Lipschitz-continuous, and besides bounded at zero in the second arguent
 \begin{eqnarray}
 \label{equation:eq-45}
\mathcal {L}(0, {\bf v})\leq c\|{\bf v}\|_1,\ \ \forall {\bf v}\in W_h.
\end{eqnarray}
Then the nonlinear problem $ \mathcal {L}({\bf u}^h, {\bf v})=0, \forall {\bf v}\in W_h$ has a unique solution ${\bf u}^h$ satisfying the estimate $\|{\bf u}^h\|_1\leq \frac{c}{c_0}.$
\end{lemma}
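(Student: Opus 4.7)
The proof proceeds via a finite-dimensional fixed-point argument invoking the preceding Lemma~\ref{lemma:lem5-1}. Since $W_h$ is finite-dimensional, we may equip it with an inner product $(\cdot,\cdot)_{W_h}$ inducing a norm equivalent to $\|\cdot\|_1$, and then define $G:W_h\to W_h$ through the Riesz representation
\[
(G(\mathbf{u}),\mathbf{v})_{W_h}=\mathcal{L}(\mathbf{u},\mathbf{v}),\qquad \forall\,\mathbf{v}\in W_h.
\]
Solving the nonlinear variational problem $\mathcal{L}(\mathbf{u}^h,\mathbf{v})=0$ for all $\mathbf{v}\in W_h$ is then equivalent to finding a zero of $G$. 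The Lipschitz continuity of $\mathcal{L}$ in its first argument immediately yields $\|G(\mathbf{u})-G(\mathbf{w})\|_{W_h}\leq L\|\mathbf{u}-\mathbf{w}\|_1$, so $G$ is continuous, which supplies one hypothesis of Lemma~\ref{lemma:lem5-1}.

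The sign condition on a sphere follows by combining strong monotonicity (applied with the second argument $\mathbf{w}=0$) with the boundedness at zero (\ref{equation:eq-45}):
\[
(G(\mathbf{u}),\mathbf{u})_{W_h}=\mathcal{L}(\mathbf{u},\mathbf{u})=\bigl[\mathcal{L}(\mathbf{u},\mathbf{u})-\mathcal{L}(0,\mathbf{u})\bigr]+\mathcal{L}(0,\mathbf{u})\geq c_0\|\mathbf{u}\|_1^2-c\|\mathbf{u}\|_1.
\]
Hence, for any radius $\rho>c/c_0$ and any $\mathbf{u}\in W_h$ with $\|\mathbf{u}\|_1=\rho$, one has $(G(\mathbf{u}),\mathbf{u})_{W_h}\geq \rho(c_0\rho-c)>0$. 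Lemma~\ref{lemma:lem5-1} (read in coordinates after a choice of basis) then produces a zero $\mathbf{u}^h$ of $G$ inside the closed ball $\{\|\mathbf{u}\|_1\leq\rho\}$.

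Uniqueness is a one-line consequence of strong monotonicity: if $\mathbf{u}_1,\mathbf{u}_2$ both satisfy the discrete problem, subtracting the two equations and testing with $\mathbf{v}=\mathbf{u}_1-\mathbf{u}_2$ gives $c_0\|\mathbf{u}_1-\mathbf{u}_2\|_1^2\leq\mathcal{L}(\mathbf{u}_1,\mathbf{u}_1-\mathbf{u}_2)-\mathcal{L}(\mathbf{u}_2,\mathbf{u}_1-\mathbf{u}_2)=0$. Because the (now unique) solution lies inside every admissible ball of radius $\rho>c/c_0$, taking $\rho\downarrow c/c_0$ delivers the a priori estimate $\|\mathbf{u}^h\|_1\leq c/c_0$.

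The only conceptual obstacle is setting up the auxiliary map $G$ on the finite-dimensional space correctly so that Lemma~\ref{lemma:lem5-1} applies; once that is done, the three assumptions (strong monotonicity, Lipschitz continuity, boundedness at zero) translate directly into the three ingredients (continuity, positivity on a sphere, explicit radius) that the Brouwer-type device requires. Note that this finite-dimensional route bypasses the Minty--Browder / hemicontinuity machinery used in Theorem~3.2, because continuity is automatic in $W_h$ and we do not need to pass to a limit of Galerkin approximants.
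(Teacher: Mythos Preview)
The paper does not actually prove this lemma: it is stated as a citation of Theorem~6.1 in Shaidurov~\cite{MR029} (``The following Lemma cited from Theorem 6.1 in [20]\ldots''), with no argument supplied. So there is no in-paper proof to compare against.

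Your proof is correct and is precisely the standard route for such results in finite-dimensional spaces: define $G$ by Riesz representation, use Lipschitz continuity of $\mathcal{L}$ for continuity of $G$, combine strong monotonicity with the bound \eqref{equation:eq-45} to force $(G(\mathbf{u}),\mathbf{u})>0$ on a sphere of radius $\rho>c/c_0$, invoke the Brouwer-type Lemma~\ref{lemma:lem5-1} for existence, and read off uniqueness and the a~priori bound from strong monotonicity. This is almost certainly the argument in the cited reference as well. One small technical point: you choose an inner product on $W_h$ whose induced norm is only \emph{equivalent} to $\|\cdot\|_1$, but then apply the sign condition on the sphere $\|\mathbf{u}\|_1=\rho$; to match the hypotheses of Lemma~\ref{lemma:lem5-1} literally you should either take the inner product to be the one inducing $\|\cdot\|_1$ itself (possible since $W_h$ is finite-dimensional and any norm arises from an inner product up to equivalence), or carry the equivalence constants through the radius. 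Either fix is routine and does not affect the conclusion.
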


Then we have the existence and uniqueness of the equation (\ref{equation:eq-44}).
\begin{theorem}
\label{theorem:th5-2}
For any i=1,$\cdots$,n, there exists a unique solution ${\bf P}_i^h\in W_h$ to solve the fully discrete problem (\ref{equation:eq-44}).
\end{theorem}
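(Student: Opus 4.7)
The plan is to adapt the continuous-level argument in Theorem 3.2 to the finite-dimensional subspace $W_h$ and then appeal to Lemma \ref{lemma:th5-1}. Fix $i$ and treat ${\bf P}_{i-1}^h\in W_h$ and ${\bf E}_i^h\in N_h$ (already determined by the decoupled scheme) as data. Rewriting (\ref{equation:eq-44}) by multiplying through by $\Delta t$ and collecting all ${\bf P}_i^h$-dependent terms on the left, I would define the nonlinear form
\begin{equation*}
\mathcal{L}({\bf u},{\bf v}):=\tau({\bf u},{\bf v})+\Delta t(f({\bf u}),{\bf v})-\tau({\bf P}_{i-1}^h,{\bf v})-\Delta t\,\epsilon_0(\epsilon_s-\epsilon_\infty)({\bf E}_i^h,{\bf v}),
\end{equation*}
on $W_h\times W_h$, so that finding a solution of (\ref{equation:eq-44}) is exactly finding ${\bf P}_i^h\in W_h$ with $\mathcal{L}({\bf P}_i^h,{\bf v})=0$ for all ${\bf v}\in W_h$.

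Next I would check the three hypotheses of Lemma \ref{lemma:th5-1} one by one. For strong monotonicity, subtracting the form at two points and using (\ref{equation:eq-006}) gives
\begin{equation*}
\mathcal{L}({\bf u},{\bf u}-{\bf w})-\mathcal{L}({\bf w},{\bf u}-{\bf w})=\tau\|{\bf u}-{\bf w}\|_0^2+\Delta t(f({\bf u})-f({\bf w}),{\bf u}-{\bf w})\geq(\tau+\omega_f\Delta t)\|{\bf u}-{\bf w}\|_0^2,
\end{equation*}
so the coercivity constant $c_0=\tau+\omega_f\Delta t>0$. Lipschitz continuity in the first argument follows analogously from (\ref{equation:eq-007}) together with Cauchy--Schwarz, with constant $\tau+C_L\Delta t$. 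The boundedness-at-zero condition (\ref{equation:eq-45}) is verified directly: since $f(0)=0$,
\begin{equation*}
|\mathcal{L}(0,{\bf v})|=|\tau({\bf P}_{i-1}^h,{\bf v})+\Delta t\,\epsilon_0(\epsilon_s-\epsilon_\infty)({\bf E}_i^h,{\bf v})|\leq\bigl(\tau\|{\bf P}_{i-1}^h\|_0+\Delta t\,\epsilon_0(\epsilon_s-\epsilon_\infty)\|{\bf E}_i^h\|_0\bigr)\|{\bf v}\|_0,
\end{equation*}
where the right-hand bound is finite because ${\bf P}_{i-1}^h$ and ${\bf E}_i^h$ are already known (from the induction on $i$, together with Theorem 4.2 for ${\bf E}_i^h$).

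Having verified the three hypotheses, Lemma \ref{lemma:th5-1} immediately provides a unique ${\bf P}_i^h\in W_h$ solving the finite-dimensional nonlinear equation, with the $a$ $priori$ bound $\|{\bf P}_i^h\|_0\leq C/c_0$. The base case $i=0$ is covered by ${\bf P}_0^h=\pi_h{\bf P}_0$, and induction on $i$ completes the argument. The main delicate point I anticipate is not any of the three estimates themselves (each is essentially a one-line consequence of the assumptions on $f$), but rather ensuring that the inner-product norm $\|\cdot\|_0$ used to verify the hypotheses is compatible with the norm $\|\cdot\|_1$ appearing in Lemma \ref{lemma:th5-1}; on the finite-dimensional space $W_h\subset[L^2(\Omega)]^3$ these are equivalent (and the statement of (\ref{equation:eq-44}) only involves the $L^2$-pairing), so no $\nabla\times$-control is needed for ${\bf P}_i^h$ and the application of the lemma goes through cleanly.
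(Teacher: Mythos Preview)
Your argument is correct. It differs from the paper's route in one organizational respect: the paper establishes \emph{existence} separately via the Brouwer-type Lemma~\ref{lemma:lem5-1}, expanding ${\bf P}^h$ in a basis $\{\varphi_j\}$ of $W_h$, defining the coordinate map $\mathcal{L}:\mathbb{R}^{3n}\to\mathbb{R}^{3n}$, and checking that $\mathcal{L}(\gamma)\cdot\gamma>0$ on a large sphere; only afterwards does it invoke Lemma~\ref{lemma:th5-1} for \emph{uniqueness}. You instead apply Lemma~\ref{lemma:th5-1} once to obtain existence and uniqueness simultaneously, which is more economical since that lemma already delivers both. The trade-off is that you must verify the Lipschitz hypothesis (via (\ref{equation:eq-007})), which the paper's existence step does not need; conversely, the paper's coordinate-level Brouwer argument is entirely self-contained and does not rely on the black-box Lemma~\ref{lemma:th5-1} for the existence half. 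Your remark about the norm $\|\cdot\|_1$ versus $\|\cdot\|_0$ is apt and correctly resolved: on $W_h\subset[L^2(\Omega)]^3$ no curl or gradient control enters, so the $L^2$ inner product is the natural one and the lemma applies with $\|\cdot\|_0$ throughout. One small slip: your parenthetical ``together with Theorem~4.2 for ${\bf E}_i^h$'' points to the very theorem you are proving; you mean the existence of ${\bf E}_i^h$ from the linear problem (\ref{equation:eq-43}), which the paper handles (by Lax--Milgram) in analogy with Theorem~3.1.
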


\begin{proof}

For the nonlinear problem (\ref{equation:eq-44}), we consider its algebra system of the following scheme
\begin{eqnarray}
 \label{equation:eq-46}
(\frac{\tau}{\Delta t}{\bf P}^h,{\bf \Psi}^h)+(f({\bf P}^h),{\bf \Psi}^h)=(F_1,{\bf \Psi}^h),\forall {\bf \Psi}^h\in W_h,
\end{eqnarray}
where $F_1=\epsilon_0(\epsilon_s-\epsilon_\infty){\bf E}_i^h+\frac{\tau}{\Delta t}{\bf P}_{i-1}^h$.

Let $\varphi_i=(\varphi_{ix},\varphi_{iy},\varphi_{iz}), i=1,\cdots, n,$ be the standard basis functions of $W_h$ on $\mathcal T^h$. By representing ${\bf P}^h=\sum\limits_{i=1}^n{\bf \alpha}_i\varphi_i$ with ${\bf \alpha}_i=(\alpha_{ix},\alpha_{iy},\alpha_{iz})$, we need only to find ${\bf \gamma}=({\bf \alpha}_1,{\bf \alpha}_2,\cdots,{\bf \alpha}_n)$ for this problem. Define $\mathcal L$ from ${\bf R}^{3n}\ to\ {\bf R}^{3n}:\mathcal L({\bf \gamma})=(\mathcal L_1({\bf \gamma}),\cdots,\mathcal L_n({\bf \gamma}))$ with
\begin{eqnarray}
 \label{equation:eq-47}
\mathcal L_j({\bf \gamma})=(\frac{\tau}{\Delta t}\sum\limits_{\i=1}^n{\bf \alpha}_i\varphi_i,\varphi_j)+(f(\sum\limits_{i=1}^n{\bf \alpha}_i\varphi_i),\varphi_j)-(F_1,\varphi_j),
\end{eqnarray}
for $j=1,\cdots,n$. Then, we can reduce this problem to the nonlinear algebraic equation ${\mathcal {L}}({\bf \gamma})=0$. From (\ref{equation:eq-006}) we have
\begin{eqnarray}
{\mathcal L}({\bf \gamma}){\bf \gamma}&=&(\frac{\tau}{\Delta t}\sum\limits_{i=1}^n{\bf \alpha}_i\varphi_i,\sum\limits_{i=1}^n{\bf \alpha}_i\varphi_i)+(f(\sum\limits_{i=1}^n{\bf \alpha}_i\varphi_i ),\sum\limits_{i=1}^n{\bf \alpha}_i\varphi_i)-(F_1,\sum\limits_{i=1}^n{\bf \alpha}_i\varphi_i)\nonumber\\
&\geq&\frac{\tau_{min}}{\Delta t}\|\sum\limits_{i=1}^n{\bf \alpha}_i\varphi_i\|_0^2+\omega_f\|\sum\limits_{i=1}^n{\bf \alpha}_i\varphi_i\|_0^2-(F_1,\sum\limits_{i=1}^n{\bf \alpha}_i\varphi_i)\nonumber\\
&\geq&C(\|\sum\limits_{i=1}^n{\bf \alpha}_i\varphi_i\|_0^2-\|F_1\|_0^2)
\geq C(|{\bf \alpha}|^2-\|F_1\|_0^2)\nonumber\\
 \label{equation:eq-48}
&=&C(|{\bf \gamma}|^2-\|F_1\|_0^2).
\end{eqnarray}
Thus, ${\mathcal L}({\bf \gamma}){\bf \gamma}>0$ if $|{\bf \alpha}|^2=r^2$ provided we select $r>0$ sufficiently large. We apply Lemma \ref{lemma:lem5-1} to conclude that the equation $L({\bf \gamma})=0$ has at least one solution in the set $\{{\bf \gamma}\in {\bf R}^{2n}:|{\bf \gamma}|\leq r\}$ if $r>\|F_1\|_0$. This implies the existence of ${\bf P}_i^h\in N_h$ which solves (\ref{equation:eq-44}).

The uniqueness can be achieved by Lemma \ref{lemma:th5-1} directly. We leave this to the reader.
\end{proof}

Referring to Lemma 3.1-3.3, we  can obtain  the similar results without proof for ${\bf E}_i^h,\ {\bf P}_i^h$.
\begin{lemma}
 For j=1,$\cdots$,n, there is a positive real number C such that
\begin{eqnarray}
 \label{equation:eq-49}
&&\frac{\mu_0\epsilon_0}{2}\|{\bf E}_j^h\|_0^2+\frac{\epsilon_0\mu_0}{2}\sum\limits_{i=1}^j\|{\bf E}_i^h-{\bf E}_{i-1}^h\|_0^2+\frac{\Delta t^2}{2}\sum\limits_{i=1}^j\|\nabla\times{\bf E}_i^h\|_0^2
 +\frac{\tau}{2}\|{\bf P}_j^h\|_0^2+\frac{\tau}{2}\sum_{i=1}^j\|{\bf P}_i^h-{\bf P}_{i-1}^h\|_0^2\leq C,\\
\label{equation:eq-50}
&&\frac{\mu_0\epsilon_0}{2}\|\delta{\bf E}_j^h\|_0^2+\frac{\mu_0\epsilon_0}{2}\sum\limits_{i=1}^j\|\delta{\bf E}_i^h-\delta{\bf E}_{i-1}^h\|_0^2+\frac{1}{2}\|\nabla\times{\bf E}_j^h\|_0^2+\frac{1}{2}\sum\limits_{i=1}^j\|\nabla\times({\bf E}_i^h-{\bf E}_{i-1}^h)\|_0^2\leq C,\\
 \label{equation:eq-52}
&&\|\delta{\bf P}_j^h\|_0^2\leq C.
\end{eqnarray}
\end{lemma}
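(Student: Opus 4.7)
The plan is to mirror the semi-discrete proofs of Lemmas 3.1--3.3 line by line, since all the natural test functions $\mathbf{E}_i^h$, $\delta \mathbf{E}_i^h$, $\mathbf{P}_i^h$, $\delta \mathbf{P}_i^h$ belong to $N_h$ or $W_h$ by construction, so they are admissible in the discrete weak formulations (\ref{equation:eq-43})--(\ref{equation:eq-44}). Because the bilinear structure of the discrete equations is identical to the semi-discrete ones, every estimate in the proofs of Lemmas 3.1--3.3 transfers verbatim, and the resulting constants $C$ are independent of $h$ (they depend only on $\epsilon_0,\mu_0,\epsilon_s,\epsilon_\infty,\tau$, $M$, $B$, the data, and the same smallness condition on $\Delta t$ as before).

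For (\ref{equation:eq-49}) I would first test (\ref{equation:eq-43}) by $\boldsymbol{\Phi}^h = \mathbf{E}_i^h$ with the ghost value $\mathbf{E}_{-1}^h = \mathbf{E}_0^h$, multiply by $\Delta t$, and sum in $k=1,\dots,i$. The algebraic identity producing $\tfrac{\epsilon_0\mu_0}{2\Delta t^2}(\|\mathbf{E}_i^h\|_0^2-\|\mathbf{E}_{i-1}^h\|_0^2+\|\mathbf{E}_i^h-\mathbf{E}_{i-1}^h\|_0^2)$ and the Abel-summation argument used for the curl term in (\ref{equation:eq-18}) are purely algebraic and are unaffected by discretization. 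The nonlinear terms involving $f'$ and $f$ are controlled exactly as in the derivation of (\ref{equation:eq-18})--(\ref{equation:eq-19}) using $0<f'<B$ and $|f(x)|\leq M|x|$ from (\ref{equation:eq-008}). Then I test (\ref{equation:eq-44}) by $\boldsymbol{\Psi}^h = \mathbf{P}_i^h$, use strong monotonicity (\ref{equation:eq-006}) to get the counterpart of (\ref{equation:eq-20}), combine with the electric-field inequality, fix the Young parameters $\epsilon_1,\dots,\epsilon_4$ as in Lemma 3.1, and close with the discrete Grönwall inequality.

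For (\ref{equation:eq-50}) I would test (\ref{equation:eq-43}) by $\delta \mathbf{E}_i^h \in N_h$ and sum in $i$, invoking the identity
\begin{equation*}
\Delta t\,(\delta^2 \mathbf{E}_i^h,\delta \mathbf{E}_i^h) = \tfrac{1}{2}\bigl(\|\delta \mathbf{E}_i^h\|_0^2 - \|\delta \mathbf{E}_{i-1}^h\|_0^2 + \|\delta \mathbf{E}_i^h - \delta \mathbf{E}_{i-1}^h\|_0^2\bigr),
\end{equation*}
and the analogous identity for $(\nabla\times \mathbf{E}_i^h,\nabla\times\delta \mathbf{E}_i^h)$. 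The remaining terms are estimated by Cauchy--Schwarz and Young exactly as in Lemma 3.2, using (\ref{equation:eq-49}) together with $0<f'<B$ and $|f|\leq M|\cdot|$, and discrete Grönwall concludes. For (\ref{equation:eq-52}) I would test (\ref{equation:eq-44}) by $\boldsymbol{\Psi}^h = \delta \mathbf{P}_i^h \in W_h$, obtaining the discrete analogue of (\ref{equation:eq-22}); choosing $\epsilon_5,\epsilon_6$ so that $\tau - \epsilon_5 - \epsilon_6\epsilon_0(\epsilon_s-\epsilon_\infty)>0$ and invoking (\ref{equation:eq-49}) finishes the proof.

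There is no real obstacle beyond bookkeeping: admissibility of the discrete test functions is automatic, and no inverse inequality or interpolation estimate is needed. The only conceptual point that must be checked is the $\Delta t$-smallness condition (the same one appearing after (\ref{equation:eq-20}) and invoked in Theorem 3.1), which is independent of $h$; once this is assumed, all the coercivity and positivity arguments used in the semi-discrete setting apply verbatim at the discrete level.
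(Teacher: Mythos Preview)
Your proposal is correct and matches the paper's approach: the paper states this lemma ``without proof'' and simply refers back to Lemmas 3.1--3.3, since the fully discrete equations have the same structure and the natural test functions $\mathbf{E}_i^h,\delta\mathbf{E}_i^h\in N_h$ and $\mathbf{P}_i^h,\delta\mathbf{P}_i^h\in W_h$ are admissible. Your outline of testing with these functions and repeating the Abel summation, Young's inequality, and discrete Gr\"onwall arguments is exactly what the paper intends.
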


Next, the  error estimates for the full discrete schemes (\ref{equation:eq-43})-(\ref{equation:eq-44}) can be established by the following theorem. For the convenience, we denote ${\bf \Theta}_j^h={\bf E}_j^h-\Pi^h{\bf E}_j$ and ${\bf \Psi}^h_j={\bf P}_j^h-\pi_h {\bf P}_j$.
\begin{theorem}
\label{theorem:th5-3}
Assume that ${\bf E}\in H^1(0,T;{\bf H}(curl,\Omega))\bigcap H^2(0,T;[H^s(\Omega)]^d)$, ${\bf P}\in H^1(0,T;(L^\infty(\Omega))^2)\bigcap H^1(0,T;[H^s(\Omega)]^d)$.
Let $({\bf E},{\bf P})$ and $({\bf E}_j^h,{\bf P}_j^h)(1\leq j\leq n)$ be the solutions of the problem (\ref{equation:eq-10})-(\ref{equation:eq-11}) and the problem (\ref{equation:eq-43})-(\ref{equation:eq-44}) at time $t=j\Delta t$,
respectively. For  the polynomial degree $k\geq 2$, and ${\bf E}_0\in [H^s(\Omega)]^d, 0<d\leq 3,\frac{1}{2}<\delta<1, \frac{1}{2}+\delta <s\leq k$,
there holds
\begin{eqnarray}
\label{equation:eq-53}
\|{\bf \Theta}_n^h\|_0^2+\sum\limits_{j=1}^n\|\Delta t\nabla\times{\bf \Theta}_j^h\|_0^2+\|\sum\limits_{j=1}^n\Delta t\nabla\times{\bf \Theta}_j^h\|_0^2+\|{\bf \Psi}^h_n\|_0^2\leq C(\Delta t^2+h^{2s}),
\end{eqnarray}
where $C$ is a positive constant independent of the time step length $\Delta t$ and the mesh size $h$.
\end{theorem}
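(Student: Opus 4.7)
The plan is to adapt the Rothe-style energy argument from the proof of Lemma~3.1 to the discretization error, combined with the standard projection-based split
$$
{\bf E}_j^h-{\bf E}_j = {\bf \Theta}_j^h - \eta_j^E,\qquad
{\bf P}_j^h-{\bf P}_j = {\bf \Psi}_j^h - \eta_j^P,
$$
where $\eta_j^E={\bf E}_j-\Pi^h{\bf E}_j$ and $\eta_j^P={\bf P}_j-\pi_h{\bf P}_j$ are the interpolation residuals controlled at order $O(h^s)$ by Lemma~\ref{lemma:lem4-1}. Evaluating the continuous weak form (\ref{equation:eq-10})--(\ref{equation:eq-11}) at $t=t_i$ with test functions in $N_h\subset X$ and $W_h\subset Y$, subtracting the fully discrete scheme (\ref{equation:eq-43})--(\ref{equation:eq-44}), and replacing the continuous time derivatives by their difference quotients generates two error identities for $({\bf \Theta}_i^h,{\bf \Psi}_i^h)$ whose right-hand sides consist only of interpolation residuals (of size $O(h^s)$) and Taylor time-truncation residuals (of size $O(\Delta t)$ in $L^2$).

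For the electric-field error identity I would mimic the proof of Lemma~3.1 almost verbatim: first sum the identity in $k=1,\ldots,i$ so that $\delta^2$ telescopes to a single first-order difference and the curl term becomes $(\sum_{k=1}^i\nabla\times{\bf \Theta}_k^h,\nabla\times{\bf \Phi})$; then set ${\bf \Phi}={\bf \Theta}_i^h$; finally sum in $i=1,\ldots,j$. This is exactly what produces the three left-hand quantities $\|{\bf \Theta}_j^h\|_0^2$, $\sum_i\|\Delta t\,\nabla\times{\bf \Theta}_i^h\|_0^2$ and $\|\sum_i\Delta t\,\nabla\times{\bf \Theta}_i^h\|_0^2$ appearing in (\ref{equation:eq-53}). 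The nonlinear terms $A_2(f'({\bf P}_{i-1}^h){\bf E}_i^h,{\bf \Phi})$ and $\tfrac{\mu_0}{\tau^2}(f'({\bf P}_{i-1}^h)f({\bf P}_{i-1}^h),{\bf \Phi})$ are compared with their continuous counterparts by adding and subtracting $f'({\bf P}_{i-1}){\bf E}_i^h$ and $f'({\bf P}_{i-1})f({\bf P}_{i-1}^h)$; Lipschitz continuity of $f$ and $f'$ together with the a-priori $L^\infty$ control of ${\bf P}_{i-1}^h$ then bounds each difference by $C(\|{\bf \Psi}_{i-1}^h\|_0+\|\eta_{i-1}^P\|_0)\|{\bf \Phi}\|_0$ up to a term involving $\|{\bf \Theta}_i^h\|_0+\|\eta_i^E\|_0$, which Young's inequality absorbs into the coercive part.

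For the polarization identity I would test with ${\bf \Psi}^h={\bf \Psi}_i^h$ and sum in $i=1,\ldots,j$. The discrete time derivative telescopes to $\tfrac{\tau}{2}\|{\bf \Psi}_j^h\|_0^2$ plus squared differences, while the strong monotonicity (\ref{equation:eq-006}) gives $(f({\bf P}_i^h)-f(\pi_h{\bf P}_i),{\bf \Psi}_i^h)\ge\omega_f\|{\bf \Psi}_i^h\|_0^2$. The coupling term $\epsilon_0(\epsilon_s-\epsilon_\infty)({\bf E}_i^h-{\bf E}_i,{\bf \Psi}_i^h)$ is split as $({\bf \Theta}_i^h-\eta_i^E,{\bf \Psi}_i^h)$ and bounded by Young's inequality. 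Adding the resulting bound to the electric-field estimate, with the Young parameters chosen as in Lemma~3.1 so that the coercive parts absorb the trouble terms, yields an inequality of the form
$$
\|{\bf \Theta}_j^h\|_0^2+\sum_{i=1}^j\|\Delta t\,\nabla\times{\bf \Theta}_i^h\|_0^2+\Big\|\sum_{i=1}^j\Delta t\,\nabla\times{\bf \Theta}_i^h\Big\|_0^2+\|{\bf \Psi}_j^h\|_0^2 \le C(\Delta t^2+h^{2s})+C\Delta t\sum_{i=1}^{j-1}\bigl(\|{\bf \Theta}_i^h\|_0^2+\|{\bf \Psi}_i^h\|_0^2\bigr),
$$
to which the discrete Grönwall inequality applies and the claim (\ref{equation:eq-53}) follows.

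The main obstacle is that the comparisons of $f'({\bf P}_{i-1}^h)$ and $f'({\bf P}_{i-1})$ need an a-priori $L^\infty$ bound on ${\bf P}_{i-1}^h$, which the energy estimates (\ref{equation:eq-49})--(\ref{equation:eq-52}) alone do not provide. The hypothesis $k\ge 2$ makes such a bound available inductively: combining the induction assumption $\|{\bf \Psi}_{i-1}^h\|_0\le C(\Delta t+h^s)$ with the inverse inequality $\|{\bf v}^h\|_\infty\le Ch^{-d/2}\|{\bf v}^h\|_0$ and Lemma~\ref{lemma:lem4-1} yields $\|{\bf P}_{i-1}^h\|_\infty\le C$ whenever $s-d/2>0$, which the hypothesis $\tfrac{1}{2}+\delta<s\le k$ secures for $k\ge 2$; the $k=1$ case must wait for the superconvergence bootstrap of Section~5. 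A secondary subtlety is that the order of operations -- sum in $k$, then test with ${\bf \Theta}_i^h$, then sum in $i$ -- is forced by the structure of the left-hand side of (\ref{equation:eq-53}): any other ordering destroys either the $\|{\bf \Theta}_j^h\|_0^2$ term or the $\|\sum_i\Delta t\,\nabla\times{\bf \Theta}_i^h\|_0^2$ term and reintroduces uncontrolled cross-products.
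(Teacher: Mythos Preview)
Your proposal is correct and follows essentially the same route as the paper: the double summation structure (telescope $\delta^2$ to $\delta$, test with ${\bf \Theta}_j^h$, sum again), the projection split, the Lipschitz treatment of the $f'$-terms under an a-priori $L^\infty$ assumption on ${\bf P}_{i-1}^h$, the monotonicity argument for the ${\bf P}$-equation, the discrete Gr\"onwall step, and the a-posteriori recovery of the $L^\infty$ bound via the inverse inequality are exactly what the paper does. The only cosmetic difference is that the paper generates the time-truncation residuals by integrating (\ref{equation:eq-10}) over $[t_{i-1},t_i]$ (producing the explicit remainders $R_i^{(1)},R_i^{(2)},R_i^{(3)}$) rather than evaluating at $t_i$ and Taylor-expanding, and it imposes the mild constraint $\Delta t=O(h^{d/2})$ to close the $L^\infty$ bootstrap.
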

\begin{proof}
 First integrating (\ref{equation:eq-10}) over $[t_{i-1},t_i]$ in time  yields
\begin{eqnarray}
\epsilon_0\mu_0(\partial_t{\bf E}_i-\partial_t{\bf E}_{i-1},{\bf \Phi}^h)&+&\frac{\epsilon_0\mu_0(\epsilon_s-\epsilon_\infty)}{\tau}
({\bf E}_i-{\bf E}_{i-1},{\bf \Phi}^h)-\frac{\epsilon_0\mu_0(\epsilon_s-\epsilon_\infty)}{\tau^2}(\int_{t_{i-1}}^{t_i}f'({\bf P}){\bf E},{\bf \Phi}^h)\nonumber\\
\label{equation:eq-54}
&+&(\int_{t_{i-1}}^{t_i}\nabla\times{\bf E},\nabla\times{\bf \Phi}^h)+\frac{\mu_0}{\tau^2}(\int_{t_{i-1}}^{t_i}f'({\bf P})f({\bf P}),{\bf \Phi}^h)=-\mu_0(\int_{t_{i-1}}^{t_i}{\bf g},{\bf \Phi}^h).
\end{eqnarray}
For $i=1,\cdots,n$, setting
\begin{eqnarray*}
&&R_i^{(1)}=\partial_t{\bf E}_i-\delta{\bf E}_i=\frac{1}{\Delta t}\int_{t_{i-1}}^{t_i}(t-t_{i-1})\partial_{tt}{\bf E}dt,\\
&&R_i^{(2)}=\Delta t{\bf E}_i-\int_{t_{i-1}}^{t_i}{\bf E}dt=\int_{t_{i-1}}^{t_i}(t-t_{i-1})\partial_t{\bf E}dt,\\
&&R_i^{(3)}=\Delta t\nabla\times{\bf E}_i-\int_{t_{i-1}}^{t_i}\nabla\times{\bf E}dt=\int_{t_{i-1}}^{t_i}(t-t_{i-1})\partial_t(\nabla\times{\bf E})dt,
\end{eqnarray*}
we have
\begin{eqnarray}
\label{equation:eq-554}
&&\|R_i^{(1)}\|_0^2\leq C\Delta t\int_{t_{i-1}}^{t_i}\|\partial_{tt}{\bf E}\|_0^2dt,\ \
\|R_i^{(2)}\|_0^2\leq C\Delta t^3\int_{t_{i-1}}^{t_i}\|\partial_t{\bf E}\|_0^2dt,\ \
\|R_i^{(3)}\|_0^2\leq C\Delta t^3\int_{t_{i-1}}^{t_i}\|\partial_t(\nabla\times{\bf E})\|_0^2dt.
\end{eqnarray}
Then, there holds
\begin{eqnarray}
&&\epsilon_0\mu_0(\delta{\bf E}_i-\delta{\bf E}_{i-1},{\bf \Phi}^h)+\frac{\epsilon_0\mu_0(\epsilon_s-\epsilon_\infty)}{\tau}({\bf E}_i-{\bf E}_{i-1},{\bf \Phi}^h)-\frac{\epsilon_0\mu_0(\epsilon_s-\epsilon_\infty)}{\tau^2}B(\Delta t{\bf E}_i,{\bf \Phi}^h)\nonumber\\
&&\ \ \ \ +(\Delta t\nabla\times{\bf E}_i,\nabla\times{\bf \Phi}^h)+\frac{\mu_0}{\tau^2}(\int_{t_{i-1}}^{t_i}f'({\bf P})f({\bf P}),{\bf \Phi}^h)\leq\epsilon_0\mu_0(-R_i^{(1)}+R_{i-1}^{(1)},{\bf \Phi}^h)\nonumber\\
&&\ \ \ \ \ \ \ \ \ \ \ \ \ \ \ \ \ +\frac{\epsilon_0\mu_0(\epsilon_s-\epsilon_\infty)}{\tau^2}B(-R_i^{(2)},{\bf \Phi}^h)+(R_i^{(3)},{\bf \Phi}^h)-\mu_0
(\int_{t_{i-1}}^{t_i}{\bf g},{\bf \Phi}^h).
\end{eqnarray}
Making summation for $i=1,\cdots,j$, leads to
\begin{eqnarray}
\epsilon_0\mu_0(\delta{\bf E}_j,{\bf \Phi}^h)&+&\frac{\epsilon_0\mu_0(\epsilon_s-\epsilon_\infty)}{\tau}({\bf E}_j,{\bf \Phi}^h)-\frac{\epsilon_0\mu_0(\epsilon_s-\epsilon_\infty)}{\tau^2}B(\sum\limits_{i=1}^j\Delta t{\bf E}_i,{\bf \Phi}^h)\nonumber\\
&+&(\sum\limits_{i=1}^j\Delta t\nabla\times{\bf E}_i,\nabla\times{\bf \Phi}^h)+\frac{\mu_0}{\tau^2}(\sum\limits_{i=1}^j\int_{t_{i-1}}^{t_i}f'({\bf P})f({\bf P}),{\bf \Phi}^h)\nonumber\\
&\leq&\epsilon_0\mu_0(-R_j^{(1)},{\bf \Phi}^h)+\frac{\epsilon_0\mu_0(\epsilon_s-\epsilon_\infty)}{\tau^2}B
(\sum\limits_{i=1}^j-R_i^{(2)},{\bf \Phi}^h)+(\sum\limits_{i=1}^jR_i^{(3)},{\bf \Phi}^h)\nonumber\\
\label{equation:eq-56}
&-&\mu_0(\sum\limits_{i=1}^j\int_{t_{i-1}}^{t_i}{\bf g},{\bf \Phi}^h)+\frac{\epsilon_0\mu_0(\epsilon_s-\epsilon_\infty)}{\tau}({\bf E}_0,{\bf \Phi}^h).
\end{eqnarray}
Next, integrating (\ref{equation:eq-43}) over $[t_{i-1},t_i]$ in time
and summing up for $i=1,\cdots,j$, we have
\begin{eqnarray}
\epsilon_0\mu_0(\delta{\bf E}_j^h,{\bf \Phi}^h)&+&\frac{\epsilon_0\mu_0(\epsilon_s-\epsilon_\infty)}{\tau}({\bf E}_j^h,{\bf \Phi}^h)-\frac{\epsilon_0\mu_0(\epsilon_s-\epsilon_\infty)}{\tau^2}B(\sum\limits_{i=1}^j\Delta t{\bf E}_i^h,{\bf \Phi}^h)\nonumber\\
&+&(\sum\limits_{i=1}^j\Delta t\nabla\times{\bf E}_i^h,\nabla\times{\bf \Phi}^h)+\frac{\mu_0}{\tau^2}(\sum\limits_{i=1}^j\int_{t_{i-1}}^{t_i}f'({\bf P}_i^h)f({\bf P}_i^h),{\bf \Phi}^h)\nonumber\\
\label{equation:eq-57}
&\leq&-\mu_0(\sum\limits_{i=1}^j\int_{t_{i-1}}^{t_i}{\bf g}_i,{\bf \Phi}^h)+\frac{\epsilon_0\mu_0(\epsilon_s-\epsilon_\infty)}{\tau}({\bf E}_0^h,{\bf \Phi}^h).
\end{eqnarray}

Then subtracting (\ref{equation:eq-56}) from (\ref{equation:eq-57}) and
multiplying both sides  by $\Delta t$,  we have
\begin{eqnarray}
\epsilon_0\mu_0\Delta t(\delta{\bf \Theta}_j^h,{\bf \Theta}_j^h)&+&\frac{\epsilon_0\mu_0(\epsilon_s-\epsilon_\infty)}{\tau} \Delta t\|{\bf \Theta}_j^h\|_0^2+(\sum\limits_{i=1}^j\Delta t\nabla\times{\bf \Theta}_j^h,\Delta t\nabla\times{\bf \Theta}_j^h)\nonumber\\
& \leq&-\mu_0\Delta t(\sum\limits_{i=1}^j\int_{t_{i-1}}^{t_i}({\bf g}_i-{\bf g}),{\bf \Theta}_j^h)+\epsilon_0\mu_0\Delta t(R_j^{(1)},{\bf \Theta}_j^h)\nonumber\\
&+&\frac{\epsilon_0\mu_0(\epsilon_s-\epsilon_\infty)}{\tau^2}B\Delta t(\sum\limits_{i=1}^jR_i^{(2)},{\bf \Theta}_j^h)+\Delta t(-\sum\limits_{i=1}^jR_i^{(3)},{\bf \Theta}_j^h)+\Delta t(\delta({\bf E}_j-\Pi^h{\bf E}_j),{\bf \Theta}_j^h)\nonumber\\
& +&\Delta t({\bf E}_j-\Pi^h{\bf E}_j,{\bf \Theta}_j^h)+(\Delta t\sum\limits_{i=1}^j\nabla\times({\bf E}_i-\Pi^h{\bf E}_i),\Delta t\nabla\times{\bf \Theta}_j^h)\nonumber\\
& +&\frac{\epsilon_0\mu_0(\epsilon_s-\epsilon_\infty)}{\tau^2}B(\sum\limits_{i=1}^j\Delta t(\Pi^h{\bf E}_i-{\bf E}_i),\Delta t{\bf \Theta}_j^h)+\epsilon_0\mu_0\Delta t(\Pi^h{\bf E}_0^{'}-{\bf E}_0^{'},{\bf \Theta}_j^h)\nonumber\\
& +&\frac{\epsilon_0\mu_0(\epsilon_s-\epsilon_\infty)}{\tau}\Delta t(\Pi^h{\bf E}_0-{\bf E}_0,{\bf \Theta}_j^h)+\frac{\epsilon_0\mu_0(\epsilon_s-\epsilon_\infty)}{\tau^2}B (\Delta t\sum\limits_{i=1}^j{\bf \Theta}_i^h,\Delta t{\bf \Theta}_j^h)\nonumber\\
\label{equation:eq-58}
& -&\frac{\mu_0}{\tau^2}\Delta t(\sum\limits_{i=1}^j\int_{t_{i-1}}^{t_i}(f'({\bf P}_{i-1}^h)f({\bf P}_{i-1}^h)-f'({\bf P})f({\bf P})),{\bf \Theta}_j^h)=:\sum\limits_{i=1}^{12}T_i.
\end{eqnarray}

Now, summing up $j=1,\cdots,n,$ we deal with each term on both sides of the equation (\ref{equation:eq-58}).
Applying Abel's summation rule, we have
\begin{eqnarray*}
&&\epsilon_0\mu_0\Delta t\sum\limits_{j=1}^n(\delta{\bf \Theta}_j^h,{\bf \Theta}_j^h)=\frac{1}{2}\epsilon_0\mu_0\| {\bf \Theta}_n^h\|_0^2,\\
&&\sum\limits_{j=1}^n(\sum\limits_{i=1}^j\Delta t\nabla\times{\bf \Theta}_j^h,\Delta t\nabla\times{\bf \Theta}_j^h)
=\frac{1}{2}\Delta t^2(\sum\limits_{j=1}^n\|\nabla\times{\bf \Theta}_j^h\|_0^2
+\frac{1}{2}\|\sum\limits_{j=1}^n\nabla\times{\bf \Theta}_j^h\|_0^2).
\end{eqnarray*}
Using Yong's inequality and the estimates (\ref{equation:eq-554}), we have the following results:
\begin{eqnarray*}
T_1&=&\mu_0\sum\limits_{j=1}^n\Delta t(\sum\limits_{i=1}^j\int_{t_{i-1}}^{t_i}({\bf g}_i-{\bf g}),{\bf \Theta}_j^h)\\
&\leq&\mu_0\sum\limits_{j=1}^n\Delta t\|\sum\limits_{i=1}^j\int_{t_{i-1}}^{t_i}({\bf g}_i-{\bf g})\|_0\|{\bf \Theta}_j^h\|_0
\leq\mu_0\Delta t^2\frac{1}{4\epsilon_1}\|{\bf g}\|_{H^1(0,T;L^2(\Omega)^2)}^2+\mu_0\epsilon_1\sum\limits_{j=1}^n \|\Delta t{\bf \Theta}_j^h\|_0^2,\\
T_2&=&\epsilon_0\mu_0\Delta t\sum\limits_{j=1}^n(R_j^{(1)},{\bf \Theta}_j^h)\leq\epsilon_0\mu_0\frac{1}{4\epsilon_2}\Delta t^2\|{\bf E}\|_{H^2(0,T;L^2(\Omega)^2)}^2dt+\epsilon_0\mu_0\epsilon_2\Delta t\sum\limits_{j=1}^n\|{\bf \Theta}_j^h\|_0^2\ ,\\
T_3&=&\frac{\epsilon_0\mu_0(\epsilon_s-\epsilon_\infty)B\Delta t}{\tau^2}\sum\limits_{j=1}^n(\sum\limits_{i=1}^jR_i^{(2)},{\bf \Theta}_j^h)\\
&\leq&\frac{\epsilon_0\mu_0(\epsilon_s-\epsilon_\infty)B}{4\tau^2\epsilon_3}  \sum\limits_{j=1}^n \sum\limits_{i=1}^j \|R_i^{(2)}\|_0^2
+\frac{\epsilon_0\mu_0(\epsilon_s-\epsilon_\infty)B\epsilon_3}{\tau^2}\sum\limits_{j=1}^n \|\Delta t{\bf \Theta}_j^h\|_0^2\ ,\\
& \leq &C\frac{\epsilon_0\mu_0(\epsilon_s-\epsilon_\infty)B\Delta t^2}{4\epsilon_3\tau^2}\|{\bf E}\|_{H^1(0,T;L^2(\Omega)^2)}^2dt+\frac{\epsilon_0\mu_0(\epsilon_s-\epsilon_\infty)B\Delta t\epsilon_3}{\tau^2} \sum\limits_{j=1}^n\|\Delta t{\bf \Theta}_j^h\|_0^2\ ,\\
T_4&=&\sum\limits_{j=1}^n\Delta t(\sum\limits_{i=1}^jR_i^{(3)},{\bf \Theta}_j^h)\leq\sum\limits_{j=1}^n\sum\limits_{i=1}^j\|R_i^{(3)}\|_0\|\Delta t{\bf \Theta}_j^h\|_0\\
&\leq& C\frac{\Delta t^2}{4\epsilon_4}\|{\bf E}\|_{H^1(0,T;H(curl,\Omega))}^2+\epsilon_4\sum\limits_{j=1}^n\|\Delta t{\bf \Theta}_j^h\|_0^2\ ,\\
T_5&=&\sum\limits_{j=1}^n\Delta t(\delta({\bf E}_j-\Pi^h{\bf E}_j),{\bf \Theta}_j^h)\leq \sum\limits_{j=1}^n\frac{C\Delta t h^{2s}}{4\epsilon_5}\|\delta({\bf E}_j)\|_{[H^s(\Omega)]^d}^2+\epsilon_5\sum\limits_{j=1}^n\Delta t\|{\bf \Theta}_j^h\|_0^2\\
&\leq& \frac{Ch^{2s}}{4\epsilon_5}\|\partial_t{\bf E}\|^2_{L^\infty(0,T;[H^{s}(\Omega)]^d)}+\epsilon_5\sum\limits_{j=1}^n\Delta t\|{\bf \Theta}_j^h\|_0^2\\
&\leq& \frac{C h^{2s}}{4\epsilon_5}\|{\bf E}\|_{H^2(0,T;[H^s(\Omega)]^d)}^2+\epsilon_5\sum\limits_{j=1}^n\Delta t\|{\bf \Theta}_j^h\|_0^2\ ,\\
T_6&=&\sum\limits_{j=1}^n\Delta t({\bf E}_j-\Pi^h{\bf E}_j,{\bf \Theta}_j^h)\leq \frac{Ch^{2s}}{4\epsilon_6}\sum\limits_{j=1}^n\Delta t\|{\bf E}_j\|_{[H^s(\Omega)]^d}+\epsilon_6\sum\limits_{j=1}^n\Delta t\|{\bf \Theta}_j^h\|_0\\
&\leq& \frac{Ch^{2s}}{4\epsilon_6}\|{\bf E}\|_{L^\infty(0,T;[H^s(\Omega)]^d)}^2+\epsilon_6\Delta t\|{\bf \Theta}_j^h\|_0^2\\
&\leq &\frac{Ch^{2s}}{4\epsilon_6}\|{\bf E}\|_{H^1(0,T;[H^s(\Omega)]^d)}^2+\epsilon_6\Delta t\|{\bf \Theta}_j^h\|_0^2,\\
T_{7}&=&\sum\limits_{j=1}^n(\Delta t\sum\limits_{i=1}^j\nabla\times({\bf E}_i-\Pi^h{\bf E}_i),\Delta t\nabla\times{\bf \Theta}_j^h)\\
& \leq&\frac{C}{4\epsilon_{7}}\sum\limits_{j=1}^n\|\Delta t\sum\limits_{i=1}^j\nabla\times({\bf E}_i-\Pi^h{\bf E}_i)\|_0^2+\epsilon_{7}\sum\limits_{j=1}^n\|\Delta t\nabla\times{\bf \Theta}_j^h\|_0^2\\
&\leq&\frac{C}{4\epsilon_{7}}\sum\limits_{j=1}^n\sum\limits_{i=1}^j\Delta t^2\|{\bf E}_i-\Pi^h{\bf E}_i\|_0^2+{\epsilon_{7}}\sum\limits_{j=1}^n\|\Delta t\nabla\times{\bf \Theta}_j^h\|_0^2\\
&\leq&\frac{Ch^{2s}}{4\epsilon_{7}}\|{\bf E}\|_{L^\infty(0,T;[H^s(\Omega)]^d)}+{\epsilon_{7}}\sum\limits_{j=1}^n \|\Delta t\nabla\times{\bf \Theta}_j^h\|_0^2\\
& \leq&\frac{Ch^{2s}}{4\epsilon_{7}}\|{\bf E}\|_{H^1(0,T;[H^s(\Omega)]^d)}+{\epsilon_{7}}\sum\limits_{j=1}^n\|\Delta t\nabla\times{\bf \Theta}_j^h\|_0^2\ ,\\
T_{8}&=&\frac{\epsilon_0\mu_0(\epsilon_s-\epsilon_\infty)B}{\tau^2}\sum\limits_{j=1}^n
(\sum\limits_{i=1}^j\Delta t(\Pi^h{\bf E}_i-{\bf E}_i),\Delta t{\bf \Theta}_j^h)\\
&\leq&\frac{\epsilon_0\mu_0(\epsilon_s-\epsilon_\infty)}{\tau^2} \frac{C Bh^{2s}}{4\epsilon_{8}}\|{\bf E}\|_{H^1(0,T;[H^s(\Omega)]^d)}+\frac{\epsilon_0\mu_0(\epsilon_s-\epsilon_\infty)}{\tau^2} B\epsilon_{8}\sum\limits_{j=1}^n\|\Delta t{\bf \Theta}_j^h\|_0^2\ ,\\
T_{9}&=&\epsilon_0\mu_0\sum\limits_{j=1}^n\Delta t(\Pi^h{\bf E}_0^{'}-{\bf E}_0^{'},{\bf \Theta}_j^h)\leq\frac{C\epsilon_0\mu_0h^{2s}}{4\epsilon_{9}}\|{\bf E}_0^{'}\|_{[H^s(\Omega)]^d}^2+\epsilon_0\mu_0\epsilon_{9}\sum\limits_{j=1}^n
\Delta t\|{\bf \Theta}_j^h\|_0^2\ ,\\
T_{10}&=&\frac{\epsilon_0\mu_0(\epsilon_s-\epsilon_\infty)}{\tau}\sum\limits_{j=1}^n\Delta t(\Pi^h{\bf E}_0-{\bf E}_0,{\bf \Theta}_j^h)\\
&\leq&\frac{\epsilon_0\mu_0(\epsilon_s-\epsilon_\infty)}{\tau}
\frac{Ch^{2s}}{4\epsilon_{10}}\|{\bf E}_0\|_{[H^s(\Omega)]^d}^2+\frac{\epsilon_0\mu_0(\epsilon_s-\epsilon_\infty)}{\tau}
\epsilon_{10}\sum\limits_{j=1}^n\Delta t\| {\bf \Theta}_j^h\|_0^2\ ,\\
T_{11}&=&\frac{\epsilon_0\mu_0(\epsilon_s-\epsilon_\infty)B}{\tau^2} \sum\limits_{j=1}^n(\Delta t\sum\limits_{i=1}^j{\bf \Theta}_i^h,\Delta t{\bf \Theta}_j^h)\\
&\leq&\frac{\epsilon_0\mu_0(\epsilon_s-\epsilon_\infty)B}{2\tau^2}
\sum\limits_{j=1}^n\|\Delta t{\bf \Theta}_j^h\|_0^2+\frac{\epsilon_0\mu_0(\epsilon_s-\epsilon_\infty)B}{2\tau^2}\| \sum\limits_{j=1}^n\Delta t{\bf \Theta}_j^h\|_0^2.
\end{eqnarray*}
For the last term, we have to use the properties  of the function $f$. Assume $f$ and $f'$ are  local Lipschitz functions and $f, f'\in W^{1,\infty}(\Omega)$ , respectively.
For ${\bf \Psi}^h_i={\bf P}_i^h-\pi_h {\bf P}_i$, we have
\begin{eqnarray*}
&&(f'({\bf P}_{i-1}^h)f({\bf P}_{i-1}^h)-f'({\bf P})f({\bf P})),{\bf \Theta}_j^h)\\
&=&((f'({\bf P}_{i-1}^h)-f'({\bf P}))f({\bf P}_{i-1}^h),{\bf \Theta}_j^h)
+((f({\bf P}_{i-1}^h)-f({\bf P}))f'({\bf P}),{\bf \Theta}_j^h)\\
&=&((f'({\bf P}_{i-1}^h)-f'(\pi_h{\bf P}_{i-1})+f'(\pi_h{\bf P}_{i-1})-f'({\bf P}_{i-1})+f'({\bf P}_{i-1})-f'({\bf P}))f({\bf P}_{i-1}^h),{\bf \Theta}_j^h)\\
&+&((f({\bf P}_{i-1}^h)-f(\pi_h{\bf P}_{i-1})+f(\pi_h{\bf P}_{i-1})-f({\bf P}_{i-1})+f({\bf P}_{i-1})-f({\bf P}))f'({\bf P}),{\bf \Theta}_j^h)\\
&\leq & (C\|{\bf \Psi}^h_i\|_0+Ch^s\|{\bf P}_{i-1}\|_{[H^s(\Omega)]^d}+C\Delta t \| \partial_t {P}\|_0)\|{\bf \Theta}_j^h\|_0.
\end{eqnarray*}
Hence, we have
\begin{eqnarray*}
T_{12}&=&\frac{\mu_0}{\tau^2}\sum\limits_{j=1}^n\Delta t(\sum\limits_{i=1}^j\int_{t_{i-1}}^{t_i}(f'({\bf P}_{i-1}^h)-f'({\bf P}))f({\bf P}_{i-1}^h),{\bf \Theta}_j^h)
 +\frac{\mu_0}{\tau^2}\sum\limits_{j=1}^n\Delta t(\sum\limits_{i=1}^j\int_{t_{i-1}}^{t_i}(f({\bf P}_{i-1}^h)-f({\bf P}))f'({\bf P}),{\bf \Theta}_j^h)\\
&\leq&\frac{\mu_0\Delta t^2}{4\epsilon_{11}\tau^2}\sum\limits_{j=1}^n
(\|{\bf \Psi}^h_j\|_0^2+Ch^{2s}\|{\bf P}\|_{L^\infty(0,T;[H^s(\Omega)]^d)}^2+C\Delta t^2 \| {\bf P}\|_{H^1(0,T;L^\infty(\Omega)}^2)+\epsilon_{11}\sum\limits_{j=1}^n\Delta t\|{\bf \Theta}_j^h\|_0^2.
\end{eqnarray*}

Next, subtracting (\ref{equation:eq-13}) from (\ref{equation:eq-44}),  multiplying both sides by $\Delta t$, we have
\begin{eqnarray*}
\tau({\bf P}_i^h-{\bf P}_i,{\bf \Psi}^h)-\tau({\bf P}_{i-1}^h-{\bf P}_{i-1},{\bf \Psi}^h)+\Delta t(f({\bf P}_i^h)-f({\bf P}_i),{\bf \Psi})=\epsilon_0(\epsilon_s-\epsilon_\infty)\Delta t({\bf E}_i^h-{\bf E}_i,{\bf \Psi}^h).
\end{eqnarray*}
Replaced  $\Psi^h$ by ${\bf \Psi}^h_i$, we have
\begin{eqnarray*}
&&\tau({\bf P}_i^h-{\bf P}_i,{\bf \Psi}^h_i)-\tau({\bf P}_{i-1}^h-{\bf P}_{i-1},{\bf \Psi}^h_i)+\Delta t(f({\bf P}_i^h)-f(\pi_h{\bf P}_i),{\bf \Psi}^h_i)\\
&&\ \ \ \ =\epsilon_0(\epsilon_s-\epsilon_\infty)\Delta t({\bf E}_i^h-{\bf E}_i,{\bf \Psi}^h_i)+\Delta t(f({\bf P}_i)-f(\pi_h{\bf P}_i),{\bf \Psi}^h_i).
\end{eqnarray*}
From (\ref{equation:eq-006}),
$(f({\bf P}_i^h)-f(\pi_h{\bf P}_i),{\bf \Psi}^h_i)\geq 0,$
we have
\begin{eqnarray}
\tau({\bf \Psi}^h_i-{\bf \Psi}_{i-1}^h,{\bf \Psi}^h_i)&\leq & \epsilon_0(\epsilon_s-\epsilon_\infty) \Delta t({\bf E}_i^h-{\bf E}_i,{\bf \Psi}^h_i)+\Delta t(f({\bf P}_i)-f(\pi_h{\bf P}_i),{\bf \Psi}^h_i)\nonumber\\
\label{equation:eq-62}
&+&\tau({\bf P}_i-\pi_h{\bf P}_i,{\bf \Psi}^h_i)+\tau(\pi_h{\bf P}_{i-1}-{\bf P}_{i-1},{\bf \Psi}^h_i).
\end{eqnarray}
Then, we sum (\ref{equation:eq-62}) up for $i=1,\cdots,n$ to obtain
\begin{eqnarray}
&&\sum\limits_{i=1}^n\tau({\bf \Psi}^h_i-{\bf \Psi}^h_{i-1},{\bf \Psi}^h_i)=\epsilon_0(\epsilon_s-\epsilon_\infty)\sum\limits_{i=1}^n \Delta t({\bf E}_i^h-{\bf E}_i,{\bf \Psi}^h_i)+\sum\limits_{i=1}^n\Delta t(f({\bf P}_i)-f(\pi_h{\bf P}_i),{\bf \Psi}^h_i)\nonumber\\
\label{equation:eq-63}
&&\ \ \ \ \ \ \ \ \ \ \ \ \ \ \ \ \ \ \ \ \ \ \ \ \ \ \ \ \ \ \ \ +\tau({\bf P}_n-\pi_h{\bf P}_n+\pi_h{\bf P}_0-{\bf P}_0,\sum\limits_{i=1}^n{\bf \Psi}^h_i).
\end{eqnarray}

The rest of the work is focus on the  error estimates on the right hand side of (\ref{equation:eq-63}), which concludes the nonlinear error estimates. We note a $L^\infty$ bound for the exact solution and its interpolation
\begin{eqnarray}
\label{equation:eq-apri-1}
 \|{\bf P}_i\|_{L^\infty(\Omega)}\leq C^*, \ \ \ \| \pi_h {\bf P}_i\|_{L^\infty(\Omega)}\leq C^*,
 \end{eqnarray}
and
\begin{eqnarray}
\label{equation:eq-apri-2}
 \|{\bf P}_i-\pi_h {\bf  P}_i\|_{L^\infty(\Omega)}\leq Ch^{s+1}|ln h|.
 \end{eqnarray}
{\bf An a-priori $L^\infty$ assumption up to time step $t_i, i\leq n-1.$ }
We also assume
a-priori that the numerical error function for ${\bf P}$ has a  $L^\infty$ bound at time steps $t_i, i\leq n-1,$
\begin{eqnarray}
\label{equation:eq-apri-3}
\|{\bf \Psi}_i^h\|_{L^\infty(\Omega)}\leq 1, \ \ i=1,2,\cdots, n-1,
\end{eqnarray}
so that a $L^\infty$  bound for the numerical solution $ {\bf P}_i^h$
is available
\begin{eqnarray}
\label{equation:eq-apri-4}
\|{\bf P}_i^h\|_{L^\infty(\Omega)}=\|\pi_h{\bf P}_i^h-{\bf \Psi}_i^h\|_{L^\infty(\Omega)} \leq \|\pi_h{\bf P}_i^h\|_{L^\infty(\Omega)}
+\|{\bf \Psi}_i^h\|_{L^\infty(\Omega)}\leq C^*+1.
\end{eqnarray}
This assumption will be recovered in later analysis.

Now, we deal with each term on both sides of the equation (\ref{equation:eq-63}). Using the inequality $a(a-b)\geq a^2/2-b^2/2$ yields
\begin{eqnarray}
\label{equation:eq-888}
\sum\limits_{i=1}^n\tau({\bf \Psi}^h_i-{\bf \Psi}^h_{i-1},{\bf \Psi}^h_i)\geq\frac{\tau}{2} \|{\bf \Psi}^h_n\|_0^2-\frac{\tau}{2}\|{\bf \Psi}^h_0\|_0^2=\frac{\tau}{2}\|{\bf \Psi}^h_n\|_0^2.
\end{eqnarray}
Using Cauchy's inequality and Young's inequality, and applying the result of Lemma \ref{lemma:lem4-1}, we have the following estimates
\begin{eqnarray}
\epsilon_0(\epsilon_s-\epsilon_\infty)\sum\limits_{i=1}^n \Delta t({\bf E}_i^h-{\bf E}_i,{\bf \Psi}^h_i)
&=&\epsilon_0(\epsilon_s-\epsilon_\infty)\sum\limits_{i=1}^n\Delta t({\bf \Theta}_i^h,{\bf \Psi}_i^h)+
\epsilon_0(\epsilon_s-\epsilon_\infty)\sum\limits_{i=1}^n\Delta t(\Pi^h{\bf E}_i-{\bf E}_i,{\bf \Psi}_i^h)\nonumber\\
&\leq&\frac{\epsilon_0(\epsilon_s-\epsilon_\infty)}{4\epsilon_{12}}
\sum\limits_{i=1}^n\Delta t\|{\bf \Theta}_i^h\|_0^2+
\epsilon_{12}\epsilon_0(\epsilon_s-\epsilon_\infty)\sum\limits_{i=1}^n\Delta t\|{\bf \Psi}_i^h\|_0^2\nonumber\\
\label{equation:eq-880}
&+&\epsilon_{13}\epsilon_0(\epsilon_s-\epsilon_\infty)
\sum\limits_{i=1}^n\Delta t\|{\bf \Psi}_i^h\|_0^2
+\frac{\epsilon_0(\epsilon_s-\epsilon_\infty)}{4\epsilon_{13}}
h^{2s}\|{\bf E}\|_{[H^s(curl,\Omega)]^d},\\
\sum\limits_{i=1}^n\Delta t(f({\bf P}_i)-f(\Pi^h{\bf P}_i),{\bf \Theta}_i^h)
&\leq&\sum\limits_{i=1}^n\Delta t\|f({\bf P}_i)
-f(\Pi^h{\bf P}_i)\|_0\|{\bf \Psi}^h_i\|_0\nonumber\\
&\leq&\frac{1}{4\epsilon_{14}}\sum\limits_{i=1}^nB^2\Delta t\|{\bf P}_i-\Pi^h{\bf P}_i\|_0^2+\epsilon_{14}\sum\limits_{i=1}^n\Delta t\|{\bf \Psi}^h_i\|_0^2\nonumber \\
\label{equation:eq-881}
&\leq& \frac{Ch^{2s}}{4\epsilon_{14}}\|{\bf P}\|_{H^1(0,T;[H^s(\Omega)]^d)}^2+\epsilon_{14}\sum\limits_{i=1}^n\Delta t\|{\bf \Psi}^h_i\|_0^2\ ,\\
\tau({\bf P}_n-\Pi^h{\bf P}_n+\Pi^h{\bf P}_0-{\bf P}_0,{\bf \Psi}^h_i)
&\leq& C\tau\|{\bf P}_n-\Pi^h{\bf P}_n\|_0\|{\bf \Psi}^h_i\|_0+C\tau\|\Pi^h{\bf P}_0-{\bf P}_0\|_0\|{\bf \Psi}^h_i\|_0\nonumber\\
&\leq& C\epsilon_{15}\tau h^{2s}\|{\bf P}_n\|_{[H^s(\Omega)]^d}^2+C\tau\frac{1}{4\epsilon_{15}}\sum\limits_{i=1}^n\|{\bf \Psi}^h_i\|_0^2 \nonumber\\
\label{equation:eq-882}
&+& \epsilon_{16}C\tau h^{2s}\|{\bf P}_0\|_{[H^s(\Omega)]^d}^2+C\tau\frac{1}{4\epsilon_{16}}
\sum\limits_{i=1}^n\|{\bf \Psi}^h_i\|_0^2.
\end{eqnarray}
Thus, by selecting  suitable $\epsilon_{i}, i=1,\cdots,16$, adding $T_i,i=1,\cdots,12$ to the estimates (\ref{equation:eq-880})-(\ref{equation:eq-882}), and applying Gr\"{o}nwall's inequality, we obtain
\begin{eqnarray}
\label{equation:eq-69}
\|{\bf \Theta}_n^h\|_0^2+\sum\limits_{j=1}^n\|\Delta t\nabla\times{\bf \Theta}_j^h\|_0^2+\|\sum\limits_{j=1}^n\Delta t\nabla\times{\bf \Theta}_j^h\|_0^2+\|{\bf \Psi}^h_n\|_0^2\leq C(\Delta t^2+h^{2s}).
\end{eqnarray}
The above constant $C$ is independent of time step $\Delta t$ and mesh size $ h$.

{\bf Recovery of the a-priori bound (\ref{equation:eq-apri-3}).}
With the help of the $L_2$ error estimate
(\ref{equation:eq-69}) and an application of inverse inequality, the following inequality is available,
for $0<d \leq  3, s\geq 2$,
\begin{eqnarray}
\label{equation:eq-70}
\|{\bf \Psi}^h_i\|_{L^\infty(\Omega)}\leq \frac{\|{\bf \Psi}^h_i\|_{L^2(\Omega)}}{h^{\frac{d}{2}}}\leq \frac{C(\Delta t+h^{s})}{h^{\frac{d}{2}}}\leq C_0^*,
\end{eqnarray}
under a  requirement $\Delta t=O(h^{\frac{d}{2}})$.

\end{proof}

\begin{corollary}
Under the assumptions of Theorem \ref{theorem:th5-3},
there holds
\begin{eqnarray}
\label{equation:eq-999}
\max\limits_{1\leq i\leq n}\|{\bf E}_i-{\bf E}_i^h\|_{H(curl,\Omega)}^2+\max\limits_{1\leq i\leq n}\|{\bf P}_i-{\bf P}_i^h\|_0^2\leq C(\Delta t^2+h^{2s}).
\end{eqnarray}
\end{corollary}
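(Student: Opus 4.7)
The plan is to reduce the claim to Theorem~\ref{theorem:th5-3} by splitting the total error with the Nédélec/Raviart–Thomas interpolants already introduced in Lemma~\ref{lemma:lem4-1}. Concretely, I would use the triangle decompositions
\begin{eqnarray*}
\|{\bf E}_i-{\bf E}_i^h\|_{H(curl,\Omega)}&\leq&\|{\bf E}_i-\Pi^h{\bf E}_i\|_{H(curl,\Omega)}+\|{\bf \Theta}_i^h\|_{H(curl,\Omega)},\\
\|{\bf P}_i-{\bf P}_i^h\|_{0}&\leq&\|{\bf P}_i-\pi_h{\bf P}_i\|_{0}+\|{\bf \Psi}_i^h\|_{0}.
\end{eqnarray*}
Under the regularity assumed in Theorem~\ref{theorem:th5-3}, Lemma~\ref{lemma:lem4-1} immediately handles the two interpolation pieces at the rate $O(h^s)$, uniformly in $i$.

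For the $L^2$ discrete error contributions $\|{\bf \Theta}_i^h\|_0$ and $\|{\bf \Psi}_i^h\|_0$, I would observe that the estimate established in Theorem~\ref{theorem:th5-3} is derived by summing the error equation up to an arbitrary step and that the constant $C$ there depends only on $T$ and the data, not on the particular terminal index. Hence the same bound $\|{\bf \Theta}_j^h\|_0^2+\|{\bf \Psi}_j^h\|_0^2\leq C(\Delta t^2+h^{2s})$ holds for every $j=1,\dots,n$, and we may pass to $\max_{1\leq i\leq n}$ with the same constant.

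The non-routine step is the pointwise bound on the curl, because Theorem~\ref{theorem:th5-3} only controls $\sum_j\|\Delta t\,\nabla\times{\bf \Theta}_j^h\|_0^2$ and the telescoped quantity $\|\sum_j\Delta t\,\nabla\times{\bf \Theta}_j^h\|_0^2$, neither of which yields a node-by-node bound. To fill this gap I would mimic the passage from Lemma~3.1 to Lemma~3.2: form the discrete error equation obtained by subtracting the time-integrated version of (\ref{equation:eq-10}) from (\ref{equation:eq-43}) \emph{without} the outer summation over $j$, test with $\delta{\bf \Theta}_i^h$ instead of ${\bf \Theta}_i^h$, and exploit the identity $(\nabla\times{\bf \Theta}_i^h,\nabla\times\delta{\bf \Theta}_i^h)\geq\frac{1}{2\Delta t}\bigl(\|\nabla\times{\bf \Theta}_i^h\|_0^2-\|\nabla\times{\bf \Theta}_{i-1}^h\|_0^2\bigr)$. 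Summing in $i$, the left-hand side telescopes to $\|\nabla\times{\bf \Theta}_j^h\|_0^2$, while the right-hand side reproduces the same truncation-plus-interpolation terms estimated in the proof of Theorem~\ref{theorem:th5-3} (using Lemma~\ref{lemma:lem4-1}, the regularity of $\bf E$, $\bf P$, and the Lipschitz/boundedness hypotheses on $f$, $f'$).

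The main obstacle will be controlling the nonlinear cross-term $\bigl(f'({\bf P}_{i-1}^h)f({\bf P}_{i-1}^h)-f'({\bf P})f({\bf P}),\delta{\bf \Theta}_i^h\bigr)$ after testing with $\delta{\bf \Theta}_i^h$, since $\delta{\bf \Theta}_i^h$ is only bounded in $X^{*}$ a priori. I would handle it by reusing the decomposition through $\pi_h{\bf P}_{i-1}$ already employed for $T_{12}$ in Theorem~\ref{theorem:th5-3}, absorbing the $\|\delta{\bf \Theta}_i^h\|_0$ factor via Young's inequality into the coercive $A_1\|\delta{\bf \Theta}_i^h\|_0^2$ term that the same test function produces on the left, and invoking the previously established $L^\infty$ bound (\ref{equation:eq-apri-4}) so that no new a-priori assumption is required. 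A discrete Grönwall inequality then yields $\max_i\|\nabla\times{\bf \Theta}_i^h\|_0^2\leq C(\Delta t^2+h^{2s})$, and combining with the triangle inequalities above closes the proof of (\ref{equation:eq-999}).
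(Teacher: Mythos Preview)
The paper states this corollary \emph{without proof}, evidently treating it as an immediate consequence of Theorem~\ref{theorem:th5-3} combined with the triangle inequality and the interpolation estimates of Lemma~\ref{lemma:lem4-1}. Your decomposition via $\Pi^h{\bf E}_i$ and $\pi_h{\bf P}_i$ is exactly that intended route, and your observation that the $L^2$ bounds on ${\bf \Theta}_j^h$, ${\bf \Psi}_j^h$ from Theorem~\ref{theorem:th5-3} hold uniformly in $j$ matches what the paper tacitly assumes.

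Where you depart from the paper is in recognizing a genuine gap that the paper glosses over: Theorem~\ref{theorem:th5-3} only controls $\sum_{j}\|\Delta t\,\nabla\times{\bf \Theta}_j^h\|_0^2$ and $\|\sum_{j}\Delta t\,\nabla\times{\bf \Theta}_j^h\|_0^2$, neither of which yields the pointwise bound $\max_j\|\nabla\times{\bf \Theta}_j^h\|_0^2\leq C(\Delta t^2+h^{2s})$ needed for the full $H(\mathrm{curl})$ norm. The paper offers no argument here. Your proposal to revisit the error equation with the test function $\delta{\bf \Theta}_i^h$, in direct analogy with the passage from Lemma~3.1 to Lemma~3.2, is the natural remedy: the curl term telescopes to $\|\nabla\times{\bf \Theta}_j^h\|_0^2$, the first-order term produces a coercive $A_1\|\delta{\bf \Theta}_i^h\|_0^2$ that can absorb the nonlinear cross-terms via Young's inequality, and the already-established a-priori bound~(\ref{equation:eq-apri-4}) closes the nonlinear estimate without further assumptions. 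This is a sound plan and is in fact \emph{more} complete than the paper's own treatment, which simply asserts the corollary.
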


\section{The super-convergence of the lowest Raviart-Thomas-N$\acute{e}$d$\acute{e}$lec element: k=1}

From the above section, we observe that the convergence order estimate  in (\ref{equation:eq-69}) has played a crucial role to recover the a-priori bound (\ref{equation:eq-70}). In more details, its spatial accuracy has to be stronger than $O(h^{\frac{d}{2}})$, that is, the estimate (\ref{equation:eq-70}) holds only when $k\geq2$ for $d=3$ and $k=1$ for $d\leq2$.  In order to improve the convergence order for the lowest Raviart-Thomas-N$\acute{e}$d$\acute{e}$lec element $k=1,d=3$, we can employ a super-convergence technique on a uniform mesh, seeing \cite{MR032} for the related theoretical tools. Now we consider the lowest Raviart-Thomas-N$\acute{e}$d$\acute{e}$lec element space in three dimension
\begin{eqnarray*}
&&N_h=\{ {\bf \Phi}\in {\bf H}(curl,\Omega),\phi|_K\in Q_{0,1,1}\times Q_{1,0,1} \times Q_{1,1,0}, \forall K\in \mathcal{T}_h  \},\\
&&W_h=\{ {\bf \Psi}\in [L^2(\Omega)]^3,\psi|_K\in Q_{1,0,0}\times Q_{0,1,0}\times Q_{0,0,1} , \forall K\in \mathcal{T}_h  \}.
\end{eqnarray*}

The following results are needed in the later analysis and the detailed proofs can be found in \cite{MR63}.

\begin{lemma} \label{lem:super convergence}
For any ${\bf \Phi}^h\in N_h,{\bf \Psi}^h\in W_h$, denote $\Pi^h$ and $\pi_h$ as the interpolation operator on $N_h$ and $W_h$, respectively, we have
\begin{eqnarray}
\label{equation:eq-a66666}
&&(\pi_h{\bf P}-{\bf P},{\bf \Psi}^h)=0,\ \ \forall\ {\bf \Psi}^h\in W_h,\\
\label{equation:eq-a63}
&&({\bf E}-\Pi^h {\bf E},{\bf \Phi}^h)=O(h^2)\|{\bf E}\|_2  \|{\bf \Phi}^h\|_0,\\
\label{equation:eq-a64}
&&(\nabla\times({\bf E}-\Pi^h {\bf E}),{\bf \Psi}^h)=O(h^2)\|{\bf E}\|_2  \|{\bf \Psi}^h\|_0.
\end{eqnarray}
There exists the post-processing operators $\Pi_{2h}^1 {\bf w} \in Q_{1,1,1}(\overline{K}),\pi_{2h}^1 {\bf v} \in Q_{1,1,1}(\overline{K})$ \cite{MR032,MR63}, such that
\begin{eqnarray}
\label{equation:eq-a65}
&&(i)~\|\Pi_{2h}^1 {\bf w}-{\bf w}\|_0\leq Ch^2\|{\bf w}\|_0,~~~~\|\pi_{2h}^1 {\bf v}-{\bf v}\|_0\leq Ch^2\|{\bf v}\|_2, \qquad \ \ \forall~{\bf w},{\bf v} \in [ H^2(\Omega)]^3,\\
\label{equation:eq-a66}
&&(ii)~\|\Pi_{2h}^1 {\bf w}\|_0\leq C\|{\bf w}\|_0,~~~~~~~~~~~~~\|\pi_{2h}^1 {\bf v}\|_0\leq C\|{\bf v}\|_0,~~~~~~~~~~~~~~~~~~~~~\forall~{\bf w} \in N_h,{\bf v} \in W_h,\\
\label{equation:eq-a67}
&&(iii)~\Pi_{2h}^1 {\bf w}=\Pi_{2h}^1\Pi^h {\bf w},~~~~~~~~~~~~~\pi_{2h}^1 {\bf v}=\pi_{2h}^1\pi_h {\bf v},~~~~~~~~~~~~~~~~~~~~~~~~~\forall~{\bf w} \in N_h,{\bf v} \in W_h ,
\end{eqnarray}
for the adjoint element $\overline{K}=\bigcup K_i,i=1,2,3,4$.
\end{lemma}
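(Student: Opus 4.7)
The plan is to establish the three integral identities (\ref{equation:eq-a66666})--(\ref{equation:eq-a64}) and the three properties (i)--(iii) of the post-processing operators by adapting the superconvergence machinery developed in \cite{MR032,MR63} for Raviart-Thomas-N\'{e}d\'{e}lec elements on uniform cubic meshes to the present setting.

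First, for (\ref{equation:eq-a66666}), I would work element by element. Since the lowest order space $W_h$ has one degree of freedom per face for each component and the interpolation $\pi_h$ is defined via the corresponding face moments, a direct computation on each cube $K$ shows that $(\pi_h{\bf P}-{\bf P},{\bf \Psi}^h)_K=0$ for every ${\bf \Psi}^h\in W_h$: the component of ${\bf \Psi}^h$ in each coordinate direction is independent of the two transverse variables, so the moment condition defining $\pi_h$ is exactly the $L^2$-orthogonality required. Summation over the partition then yields the global identity.

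Next, I would establish (\ref{equation:eq-a63}) and (\ref{equation:eq-a64}) by pulling back to a reference cube and Taylor-expanding ${\bf E}$ about the element center. On the reference cube the components of $(I-\Pi^h){\bf E}$ can be decomposed into a piece whose polynomial degree with respect to at least one variable is odd---so that integration against any test function in $Q_{0,1,1}\times Q_{1,0,1}\times Q_{1,1,0}$ vanishes by the centro-symmetry of the element---plus a remainder of order $O(h^2)\|{\bf E}\|_2$ controlled by the Bramble-Hilbert lemma. Scaling back and summing over elements gives the global $O(h^2)$ estimate, uniformly in $\|{\bf \Phi}^h\|_0$; the argument for $\nabla\times(I-\Pi^h){\bf E}$ against ${\bf \Psi}^h\in W_h$ is the same in spirit, since $\nabla\times$ of the N\'ed\'elec interpolation lives in $W_h$ and the same parity cancellation applies to the derivative of the Taylor remainder.

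For the post-processing operators, I would construct $\Pi_{2h}^1$ and $\pi_{2h}^1$ on each macro-element $\overline{K}=\bigcup_{i=1}^4 K_i$ as the unique $Q_{1,1,1}$ polynomial on $\overline{K}$ that matches the relevant edge/face moments of its argument. Property (i) then follows from the Bramble-Hilbert lemma once one checks that both operators reproduce $Q_{1,1,1}$ polynomials on $\overline{K}$, and property (ii) is a standard $L^2$-stability estimate obtained by a scaling argument on the reference macro-element. The main obstacle I expect is property (iii), the commutation $\Pi_{2h}^1{\bf w}=\Pi_{2h}^1\Pi^h{\bf w}$ (and its analogue for $\pi_{2h}^1$): I would verify it by showing that each moment used to define $\Pi_{2h}^1$ on $\overline{K}$ is precisely one of the degrees of freedom preserved by $\Pi^h$ on the sub-cubes $K_i$, so that the post-processing sees no difference between ${\bf w}$ and $\Pi^h{\bf w}$. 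This matching of macro-element moments with the local N\'ed\'elec/Raviart-Thomas degrees of freedom is the most delicate bookkeeping step and is what genuinely requires the uniform cubic mesh assumption.
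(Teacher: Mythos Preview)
The paper does not actually prove this lemma: immediately before the statement it says ``The following results are needed in the later analysis and the detailed proofs can be found in \cite{MR63},'' and no argument is given in the text. So there is no ``paper's own proof'' to compare against; the authors simply import these superconvergence identities and post-processing properties from \cite{MR032,MR63}.

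Your sketch is a faithful outline of how those references establish the results: the orthogonality \eqref{equation:eq-a66666} from the moment definition of $\pi_h$, the $O(h^2)$ weak estimates \eqref{equation:eq-a63}--\eqref{equation:eq-a64} via Taylor expansion about element centers plus parity cancellation on a uniform cubic mesh (the Lin-type integral identity technique), and the macro-element construction of $\Pi_{2h}^1,\pi_{2h}^1$ with Bramble--Hilbert for (i), scaling for (ii), and degree-of-freedom matching for (iii). That is exactly the machinery in \cite{MR032,MR63}, so your approach is not different in substance---it is simply the proof the paper delegates to its citations. One minor bookkeeping point: in three dimensions the macro-element $\overline K$ is a $2\times 2\times 2$ patch of eight cubes, not four; the paper's ``$i=1,2,3,4$'' appears to be a slip (or a holdover from the 2D TE setting used in the numerics), and you should use eight sub-cubes when verifying (iii) in 3D.
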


Using these post-processing operators, we can achieve the following global super-convergence for all three dispersive media.

\begin{theorem} \label{thm:super convergence}
Assume the partition $\mathcal{T}^h$ of $\Omega$ is uniform \cite{MR032}, $\Pi^h$ and $\pi_h$ are the interpolation on $N_h$ and $W_h$, respectively. If \ ${\bf E }\in H^2(0,T;[ H^2(\Omega)]^3)$, ${\bf H} \in H^1(0,T;[ H^2(\Omega)]^3)$, for the lowest Raviart-Thomas-N$\acute{e}$d$\acute{e}$lec element space, there exists the following super-convergence estimate under the condition that $\Delta t = O(h^{\frac{d}{2}})$
\begin{eqnarray}
\label{equation:eq-a68}
&&\max_{1\leq j \leq n}\|{\bf E}_j-\Pi_{2h}^1 {\bf E}^h_j\|_0\leq C(\Delta t+h^2),\ \
\max_{1\leq j \leq n}\|{\bf P}_j-\pi_{2h}^1 {\bf P}^h_j\|_0\leq C(\Delta t+h^2),
\end{eqnarray}
in which C is independent of $\Delta t$ and h.
\end{theorem}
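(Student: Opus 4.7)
The strategy follows the standard two-step post-processing super-convergence argument: first establish a super-close estimate between the numerical solution and the canonical interpolant, then lift it to a genuine $O(\Delta t + h^2)$ error bound via the post-processor. Introduce the splittings
\[
{\bf E}_j - {\bf E}_j^h = \eta_j + \theta_j^h, \qquad {\bf P}_j - {\bf P}_j^h = \zeta_j + \psi_j^h,
\]
with $\eta_j = {\bf E}_j - \Pi^h {\bf E}_j$, $\theta_j^h = \Pi^h {\bf E}_j - {\bf E}_j^h$, $\zeta_j = {\bf P}_j - \pi_h {\bf P}_j$, and $\psi_j^h = \pi_h {\bf P}_j - {\bf P}_j^h$. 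Since $\theta_j^h \in N_h$ and $\psi_j^h \in W_h$, they are admissible as discrete test functions.

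The first (and main) step is to repeat the energy analysis of Theorem~\ref{theorem:th5-3}, but now wherever an interpolation remainder $\eta_j$ or $\zeta_j$ is paired with a discrete test function, the super-convergence identities (\ref{equation:eq-a66666})--(\ref{equation:eq-a64}) upgrade the naive $O(h)$ bound to $O(h^2)$; in particular, the pairing $(\zeta_j,\psi_j^h)$ vanishes exactly by (\ref{equation:eq-a66666}). Treating the time-truncation residuals $R_i^{(1)}, R_i^{(2)}, R_i^{(3)}$ as in Theorem~\ref{theorem:th5-3} (so they contribute $O(\Delta t)$), handling the nonlinear contribution by the three-step Lipschitz decomposition used there, and invoking the discrete Gr\"onwall inequality, I would arrive at the \emph{super-close} bound
\[
\max_{1\leq j\leq n}\bigl(\|\theta_j^h\|_0^2 + \|\psi_j^h\|_0^2\bigr) + \sum_{j=1}^n\|\Delta t\, \nabla\times\theta_j^h\|_0^2 \leq C(\Delta t^2 + h^4).
\]

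The second step uses property~(iii) of Lemma~\ref{lem:super convergence}, namely $\Pi_{2h}^1 {\bf E}_j = \Pi_{2h}^1 \Pi^h {\bf E}_j$, to decompose
\[
{\bf E}_j - \Pi_{2h}^1 {\bf E}_j^h = \bigl({\bf E}_j - \Pi_{2h}^1 \Pi^h {\bf E}_j\bigr) + \Pi_{2h}^1 \theta_j^h .
\]
Property~(i) bounds the first piece by $C h^2 \|{\bf E}_j\|_2$, while property~(ii) combined with the super-close estimate bounds $\|\Pi_{2h}^1 \theta_j^h\|_0$ by $C\|\theta_j^h\|_0 \leq C(\Delta t + h^2)$. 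The argument for ${\bf P}$ is identical, using $\pi_{2h}^1$, $\pi_h$ and $\psi_j^h$ in place of their counterparts. Because the resulting bound is sharper than (\ref{equation:eq-69}), the inverse-inequality recovery (\ref{equation:eq-70}) of the $L^\infty$ a-priori assumption (\ref{equation:eq-apri-3}) remains valid for $d=3$, $k=1$ under the stated constraint $\Delta t = O(h^{d/2})$.

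The main obstacle is controlling the nonlinear term at the sharper super-close level. In Theorem~\ref{theorem:th5-3} the jump $f'(\pi_h {\bf P}_{i-1}) - f'({\bf P}_{i-1})$ was handled by $L^2$ Lipschitz continuity, producing only an $O(h)$ contribution, which is insufficient here. To close the estimate at $O(h^2)$, I would instead exploit the $L^\infty$ super-closeness $\|{\bf P}_{i-1} - \pi_h {\bf P}_{i-1}\|_{L^\infty(\Omega)} \leq C h^{s+1}|\ln h|$ from (\ref{equation:eq-apri-2}) together with the boundedness of $f({\bf P}_{i-1}^h)$ (via the a-priori $L^\infty$ bound and the boundedness assumption on $f$), so that the troublesome inner product is estimated in $L^\infty \times L^1$ and contributes at the desired $O(h^2|\ln h|)$ level; the logarithmic factor is then absorbed harmlessly by Gr\"onwall's inequality, completing the super-close step and, via the post-processing decomposition above, the theorem.
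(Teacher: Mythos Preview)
Your proposal is correct and follows essentially the same approach as the paper: establish a super-close estimate $\|\theta_j^h\|_0 + \|\psi_j^h\|_0 \leq C(\Delta t + h^2)$ by rerunning the Theorem~\ref{theorem:th5-3} argument with the super-convergence identities of Lemma~\ref{lem:super convergence}, then apply the post-processor decomposition via properties (i)--(iii) and recover the a-priori $L^\infty$ bound by the inverse inequality under $\Delta t = O(h^{d/2})$. The paper's proof is terser---it simply asserts that the super-close bound ``can be derived in a similar way'' without discussing the nonlinear term---so your explicit treatment of the jump $f'(\pi_h{\bf P}_{i-1}) - f'({\bf P}_{i-1})$ via the $L^\infty$ estimate (\ref{equation:eq-apri-2}) actually fills in a detail the paper leaves implicit.
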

\begin{proof}
From $(\ref{equation:eq-a65})$-$(\ref{equation:eq-a67})$, we have
\begin{eqnarray}
\label{equation:eq-a70}
&&\|{\bf E}_j-\Pi_{2h}^1 {\bf E}^h_j\|_0=\|\Pi_{2h}^1 ({\bf E}^h_j-\Pi^h {\bf E}_j)+(\Pi_{2h}^1 {\bf E}_j-{\bf E}_j)\|_0\\
&&\leq C\|{\bf E}^h_j-\Pi_h {\bf E}_j\|_0+\|\Pi_{2h}^1 {\bf E}_j-{\bf E}_j\|_0.\nonumber
\end{eqnarray}
Similarly, we have
\begin{eqnarray}
\label{equation:eq-a71}
&&\|{\bf P}_j-\pi_{2h}^1 {\bf P}^h_n\|_0=\|\pi_{2h}^1 ({\bf P}^h_n-\pi_h {\bf P}_j)+(\pi_{2h}^1 {\bf P}_j-{\bf P}_j )\|_0\\
&&\leq C\|{\bf P}^h_n-\pi_h {\bf P}_j\|_0+\|\pi_{2h}^1 {\bf P}_j-{\bf P}_j\|_0.\nonumber
\end{eqnarray}
From the proof of Theorem $ \ref{theorem:th5-3} $ and Lemma \ref{lem:super convergence},  the super-close of $L^2$ error estimate for $ {\bf \Theta}^h_j,{\bf \Psi}^h_j$ in a similar way can be derived by
\begin{eqnarray}
\label{equation:eq-a73}
&&\| {\bf \Theta}^h_j\|_0^2+\|{\bf \Psi}^h_j\|_0^2\leq C(\Delta t+h^{2}),
\end{eqnarray}
under the a-priori $L^{\infty}$ assumption $(\ref{equation:eq-apri-3})$. As a result, such an assumption could be similarly recovered as
\begin{eqnarray}
\label{equation:eq-a74}
&&\|{\bf \Psi}^h_j\|_{L^{\infty}}\leq \frac{C\|{\bf \Psi}^h_j\|_0}{h^{\frac{d}{2}}}\leq \frac{C(\Delta t+h^2)}{h^{\frac{d}{2}}}\leq C.
\end{eqnarray}
This finishes the argument for the a-priori bound $(\ref{equation:eq-70})$.

Finally, with the help of (\ref{equation:eq-a65})-(\ref{equation:eq-a67}), we obtain
\begin{eqnarray}
\label{equation:eq-a76}
&&\|{\bf E}_j-\Pi_{2h}^1 {\bf E}^h_j\|_0\leq Ch^2\|{\bf E}_j\|_2,~~~\|{\bf P}_j-\pi_{2h}^1 {\bf P}^h_j\|_0\leq Ch^2\|{\bf P}_j\|_2,
\end{eqnarray}
which completes the proof of Theorem \ref{thm:super convergence}.
\end{proof}

\section{Numerical Examples}

In this section, we provide some numerical examples in the transverse electromagnetic(TE) case to confirm our theoretical analysis,
 with ${\bf E}=[E_1,E_2,0]$ and ${\bf P}=[P_1,P_2,0]$. For convenience, we still denote ${\bf E}=[E_1,E_2]$ and ${\bf P}=[P_1,P_2]$. The computations are performed using the Matlab code.
In these numerical examples, we observe that, numerical results have shown that, the stability and convergence are well preserved with a  relaxed constraint for the time step, $\Delta t = O (h)$. For the experiments, the parameters are taken as
$\epsilon_0=1,\mu_0=1, \tau=1,\epsilon_s=2,\epsilon_\infty=1.$

Define
\begin{eqnarray*}
&&errE=\|{\bf E}_n-{\bf E}^h_n\|_0, \quad errP=\|{\bf P}_n-{\bf P}^h_n\|_0,\ \
errCurlE= \|\nabla\times({\bf E}_n-{\bf E}^h_n)\|_0,\\
&&SerrE=\| {\bf E}_n-\Pi_{2h}^1 {\bf E}^h_n\|_0, \quad SerrP=\|{\bf  P}_n-\pi_{2h}^1{\bf P}^h_n\|_0.
\end{eqnarray*}

\subsection{Example One}
Denoting the real solution
\begin{eqnarray}
{\bf E}=\exp(t)[sin((1+x)y)(y-1)*|2x-1|^{\alpha}),
sin((1+y)x)(x-1)|2y-1|^{\alpha}],
\end{eqnarray}
and
letting $\alpha=2.1$, ${\bf P}(x,y,t)={\bf E}(x,y,t)$, we can see ${\bf n}\times {\bf E}=0.$

From the Table 1, we can see that the convergent order  in spatial is $O(h)$ with respect to the lowest Raviart-Thomas-N$\acute{e}$delec element as well as that of the $L^2$ super-convergence in Table 2.
In figure 1, we demonstrate the numerical solution for $E_{1h}$ , $ E_{2h}$ (the two left ) and $P_{1h}$, $P_{2h}$ (the two right) at grids on the mesh $32\times 32$ after 100 time steps by $ \Delta t=1e-5$. In figure 2, we show the error for two components of ${\bf E}_{h}$ (the two left ) and ${\bf P}_{h}$ (the two right), respectively. In figure 3 and figure 4, we present the super-convergent solutions and error. In figure 5, we also give the vector values at grids on the mesh for the numerical solutions $ {\bf E}^h_n,\ {\bf P}^h_n$ and the super-convergent solutions $ \Pi_{2h}^1 {\bf E}^h_n,\ \pi_{2h}^1{\bf P}^h_n$, respectively.

\begin{table}[!htbp]
\begin{center}
\label{tab:1}       
\caption{Convergence error results for ${\bf E}$, ${\bf P}$  in example 1, with time step size $\Delta t = 10^{-5}$.}
\begin{tabular}{ccccccccc}
\hline\noalign{\smallskip}
$N\times N$& $errE$ &order & $errP$ &order  & errCurlE &order\\
\noalign{\smallskip}\hline\noalign{\smallskip}
4$\times$4   & 0.0575    &--        &0.0346     &--       & 0.2397   &-- \\
8$\times$8   & 0.0296    & 0.9582   &0.0176     & 0.9717    & 0.1239  &0.9524 \\
16$\times$16 & 0.0149 &0.9898     &0.0089   &0.9923  &0.0624 & 0.9901\\
32$\times$32 & 0.0075  &0.9975      &0.0046     & 0.9940  &0.0312   &  0.9982\\
\noalign{\smallskip}\hline
\end{tabular}
\end{center}
\end{table}


\begin{table}[!htbp]
\begin{center}
\label{tab:1}       
\caption{Super-convergence error results for ${\bf E}$, ${\bf P}$ in example 1, with time step size $\Delta t = 10^{-5}$.}
\begin{tabular}{ccccccccc}
\hline\noalign{\smallskip}
$N\times N$& $SerrE$ &order & $SerrP$ &order  \\
\noalign{\smallskip}\hline\noalign{\smallskip}
4$\times$4   & 0.0305   &--        & 0.0473    &--        \\
8$\times$8   &  0.0073   & 2.0625  &0.0119     & 1.9968     \\
16$\times$16 & 0.0018  &2.0518  &0.0029     &2.0414  \\
32$\times$32 & 0.0004  &2.0279  &0.0007    & 2.0114   \\
\noalign{\smallskip}\hline
\end{tabular}
\end{center}
\end{table}


\begin{figure}
\centering
        \hbox{
\includegraphics [width=1.0in,height=1.0in]{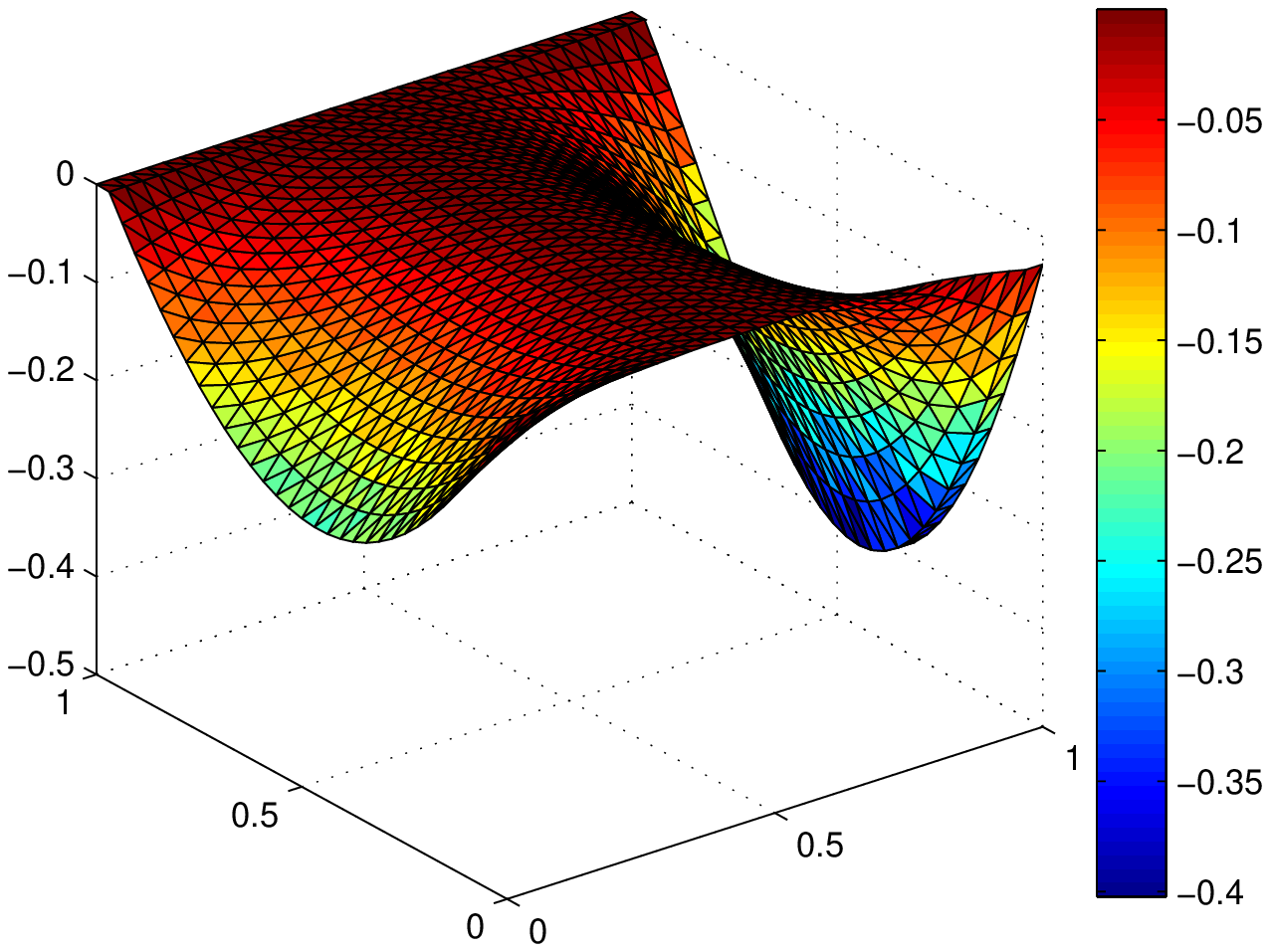}
\includegraphics [width=1.0in,height=1.0in]{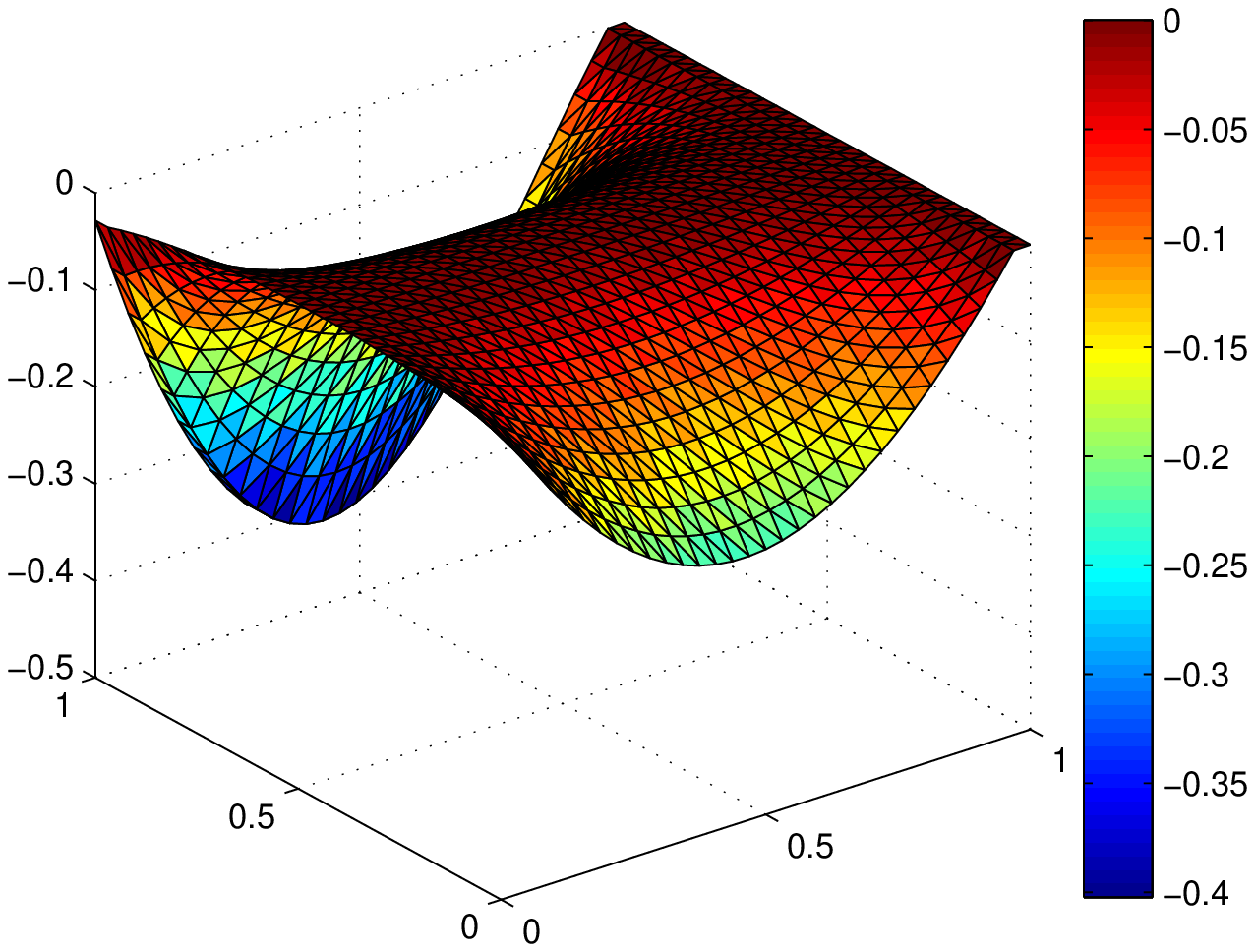}
\includegraphics [width=1.0in,height=1.0in]{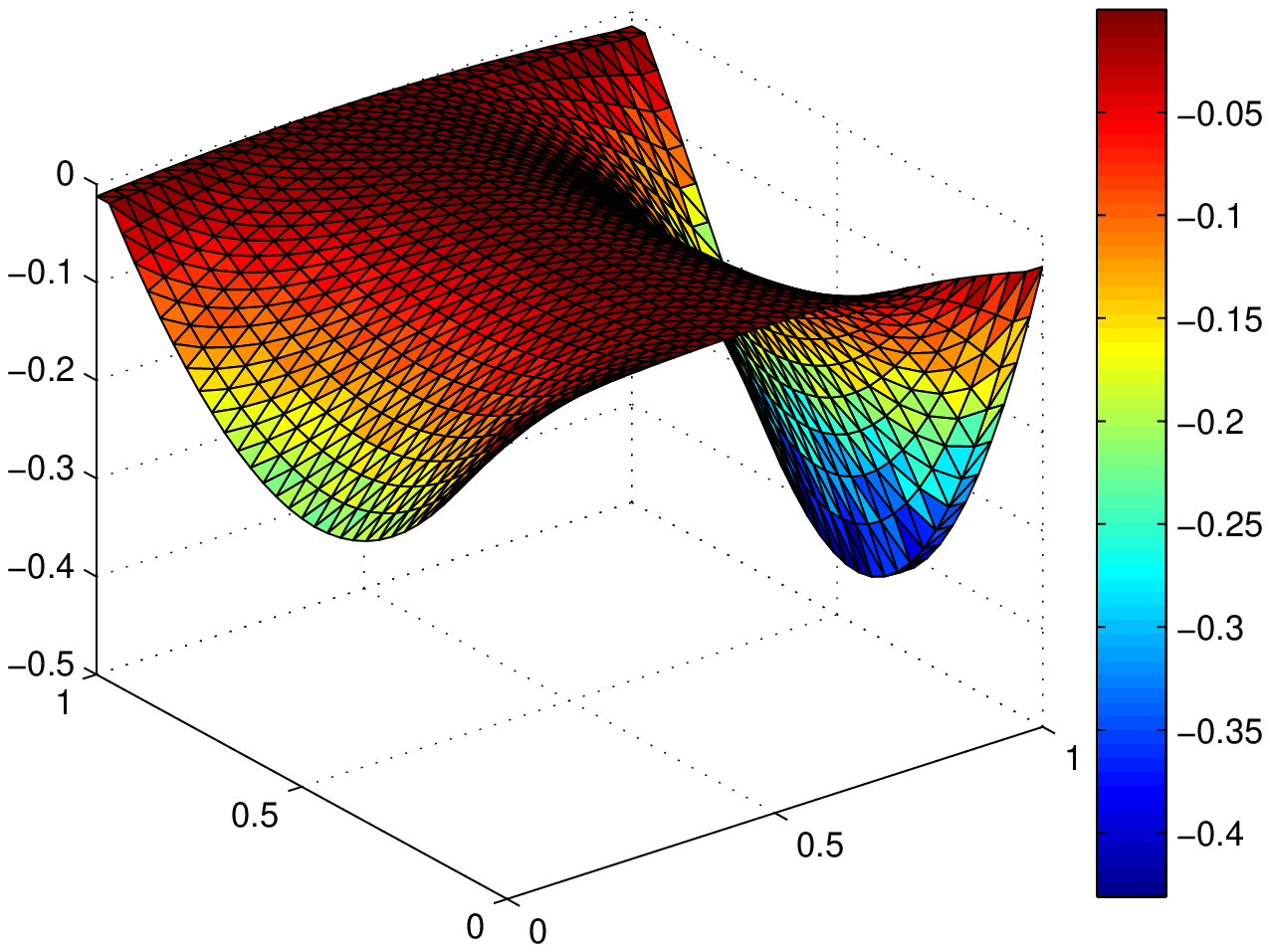}
\includegraphics [width=1.0in,height=1.0in]{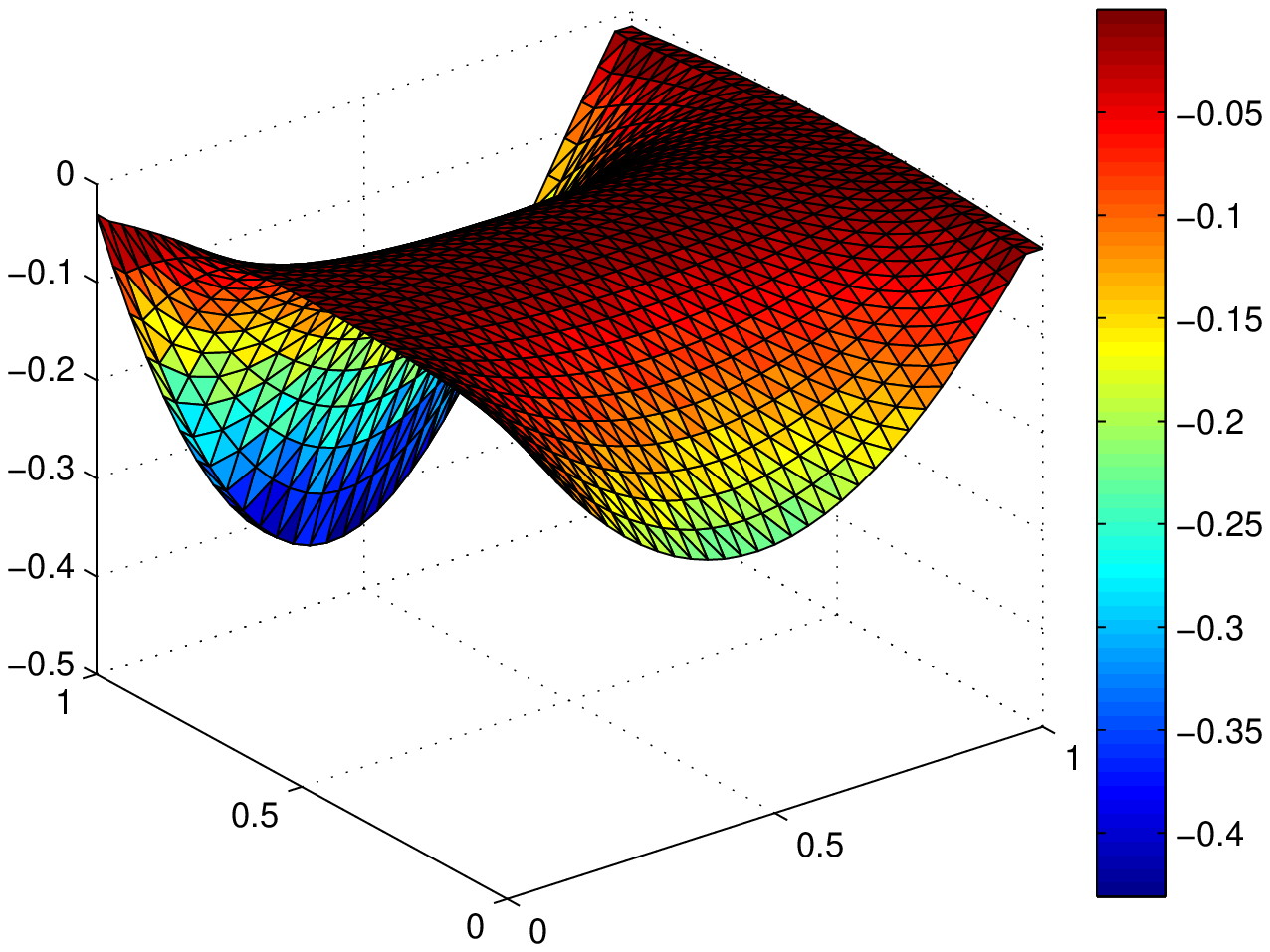}
 }
\label{fig:1}\caption{Numerical solution for $E_{1h}$ (the first left) , $E_{2h}$ (the second left ) and $P_{1h}$ (the first right) $ P_{2h}$ (the second right).}
\end{figure}

\begin{figure}
\centering
        \hbox{
\includegraphics [width=1.0in,height=1.0in]{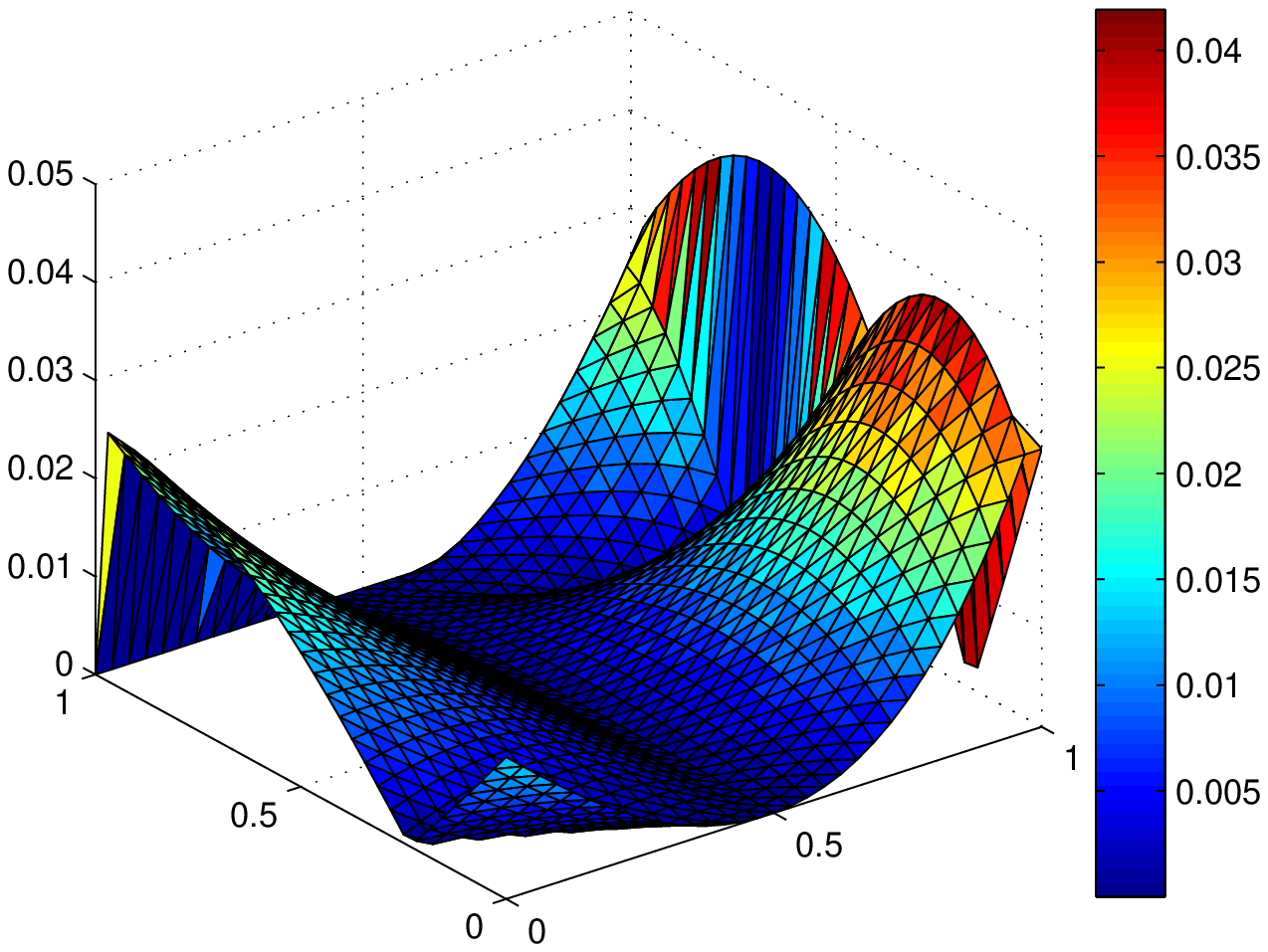}
\includegraphics [width=1.0in,height=1.0in]{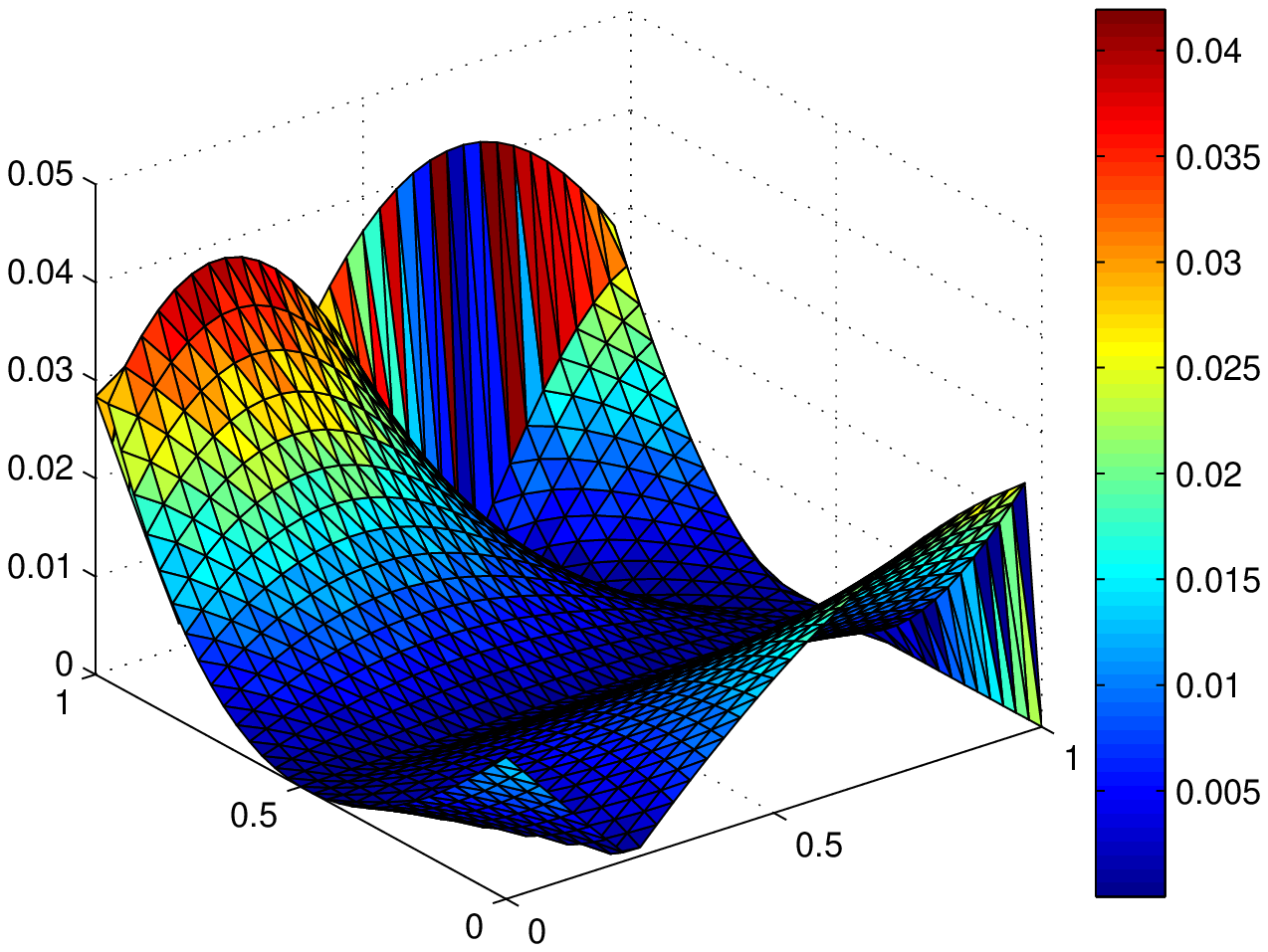}
 \includegraphics [width=1.0in,height=1.0in]{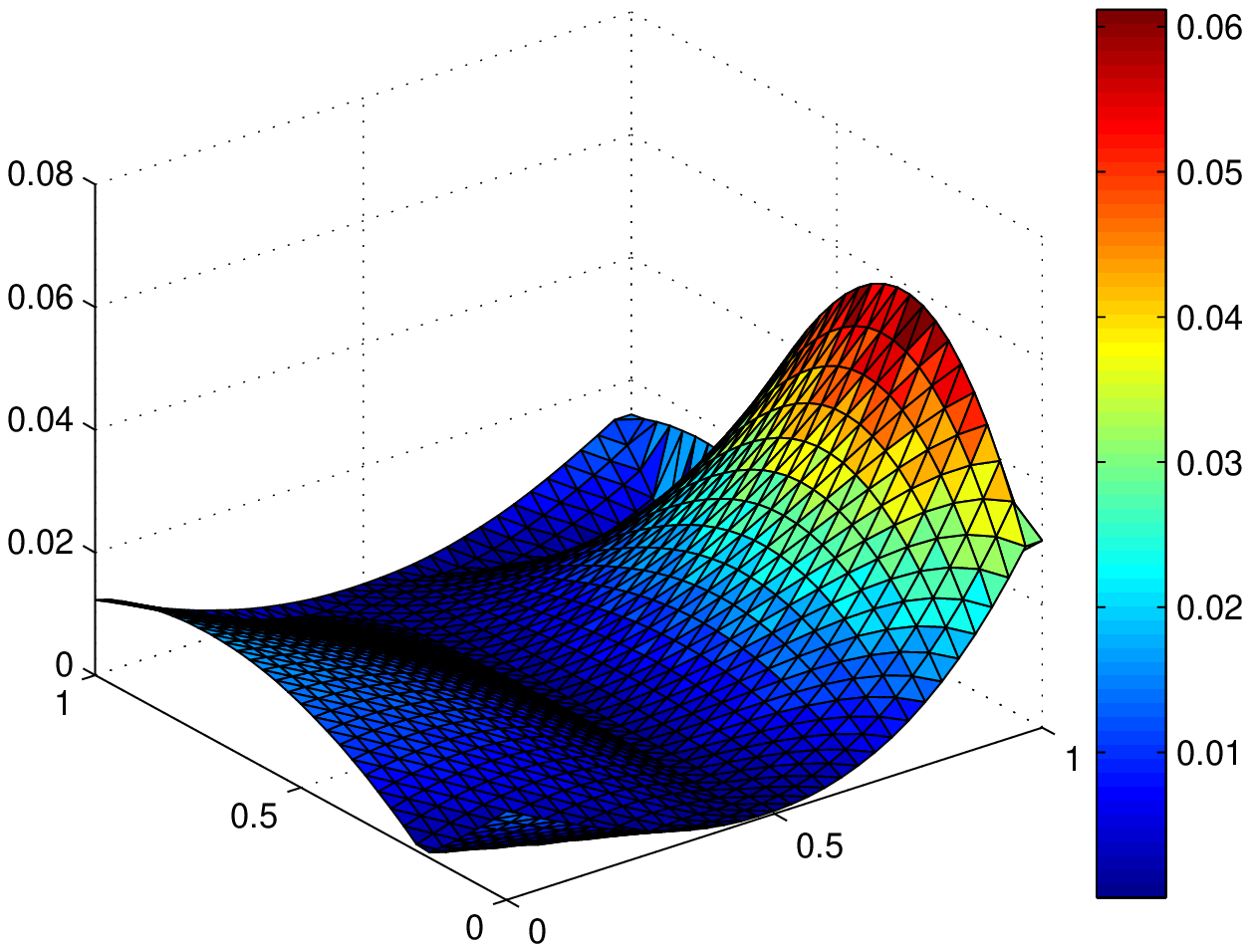}
\includegraphics [width=1.0in,height=1.0in]{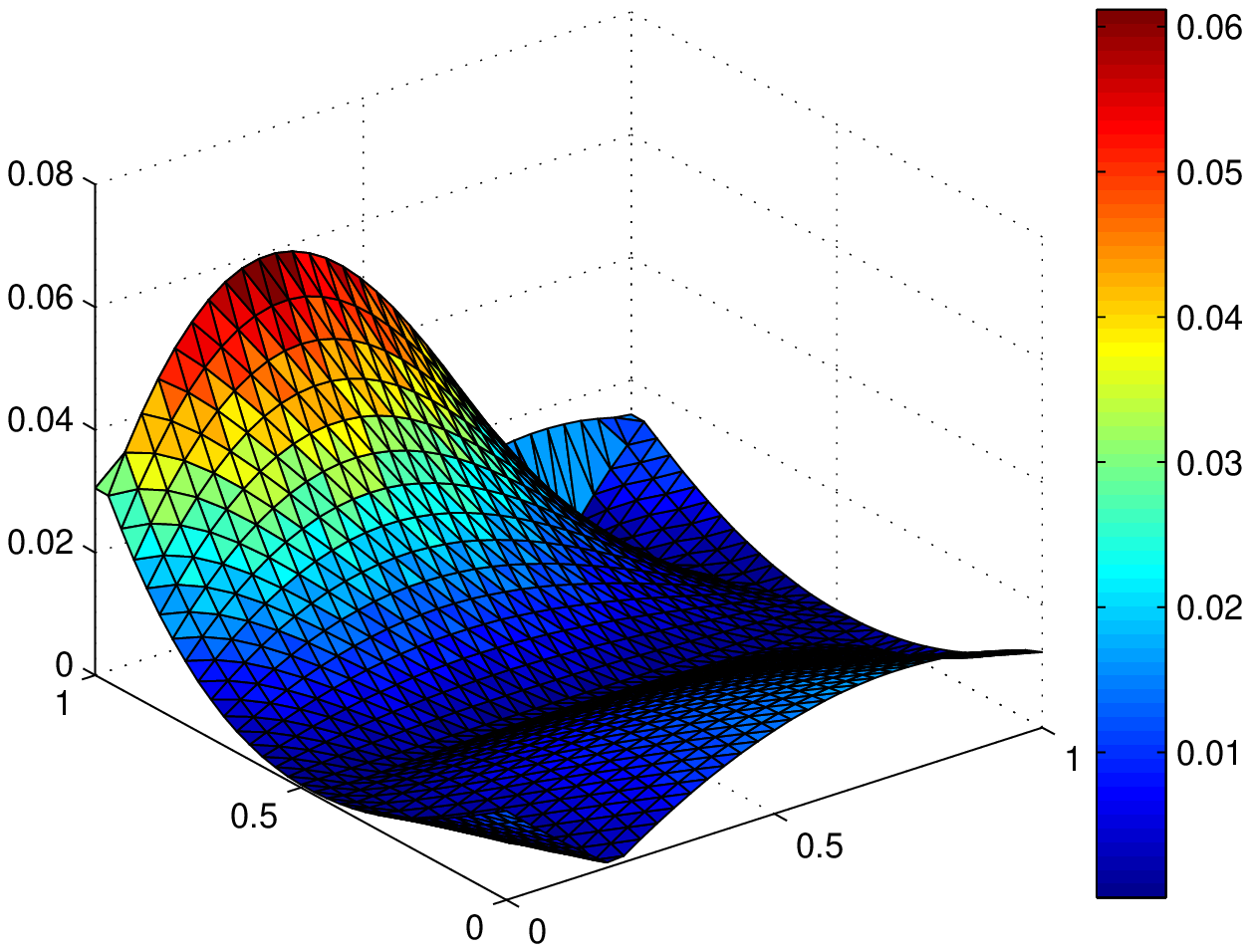}}
\label{fig:2}\caption{Error values for $E_{1h}$ (the first left) , $E_{2h}$ (the second left ) and $P_{1h}$ (the first right) $P_{2h}$ (the second right).}
\end{figure}

\begin{figure}
\centering
        \hbox{
\includegraphics [width=1.0in,height=1.0in]{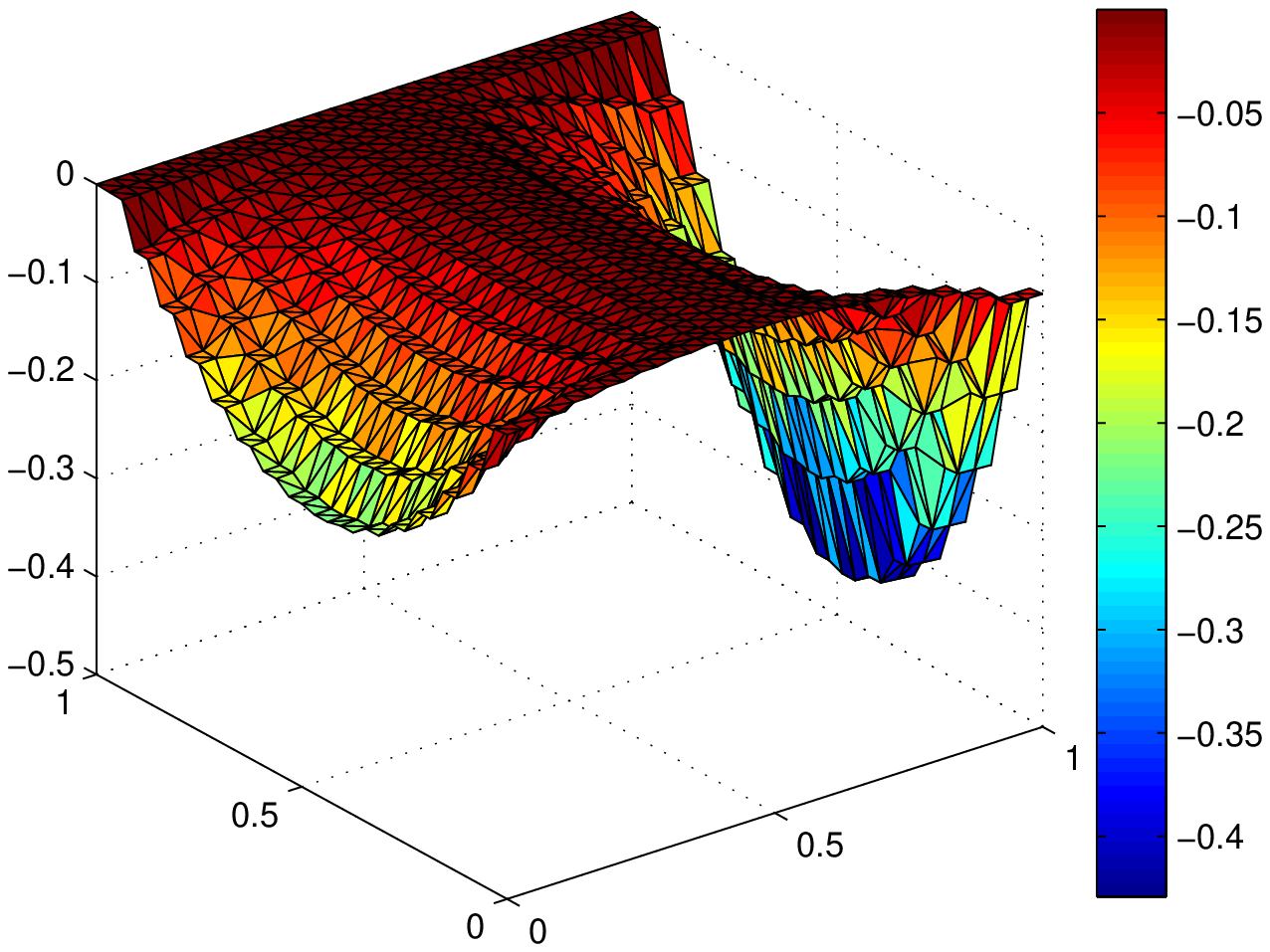}
\includegraphics [width=1.0in,height=1.0in]{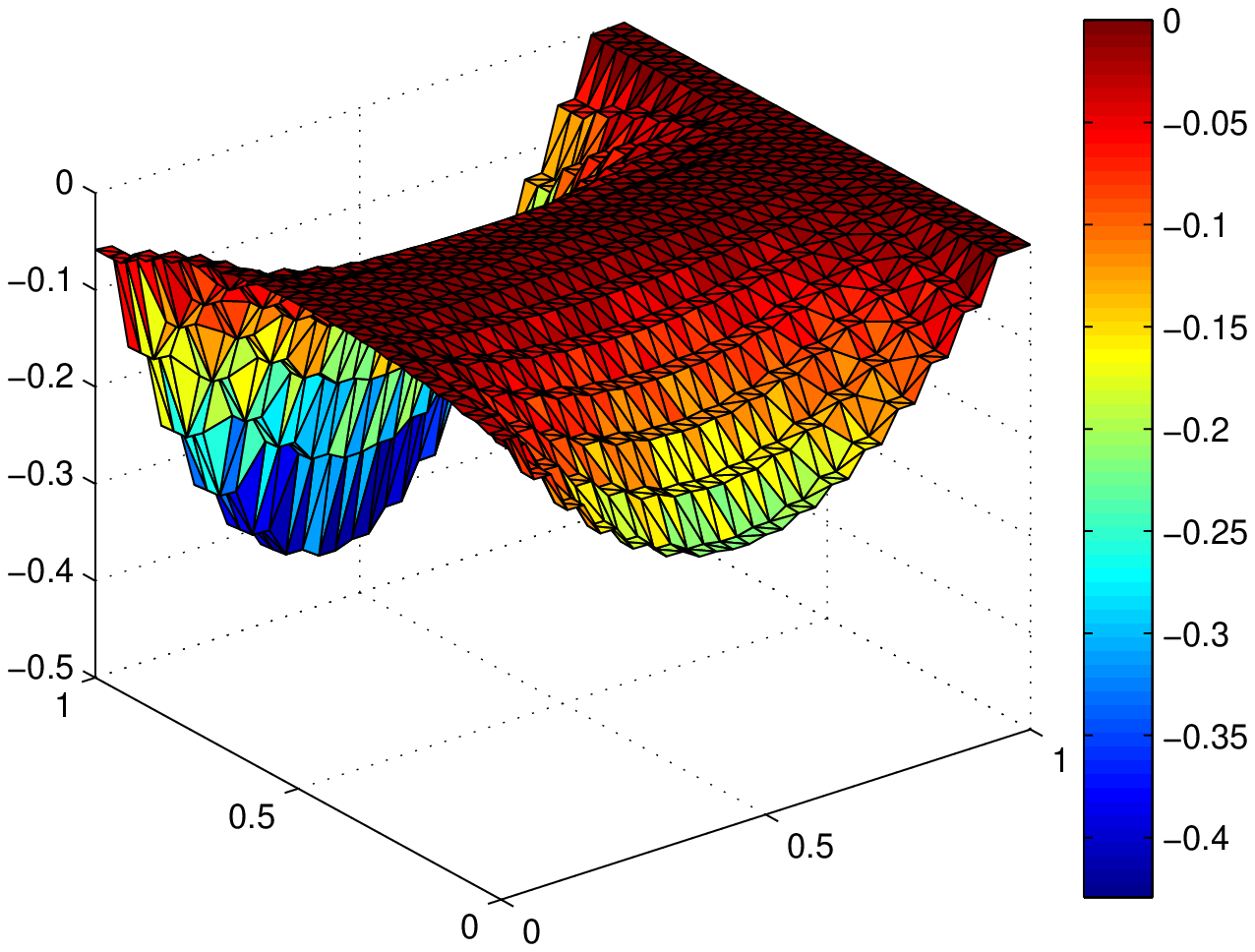}
\includegraphics [width=1.0in,height=1.0in]{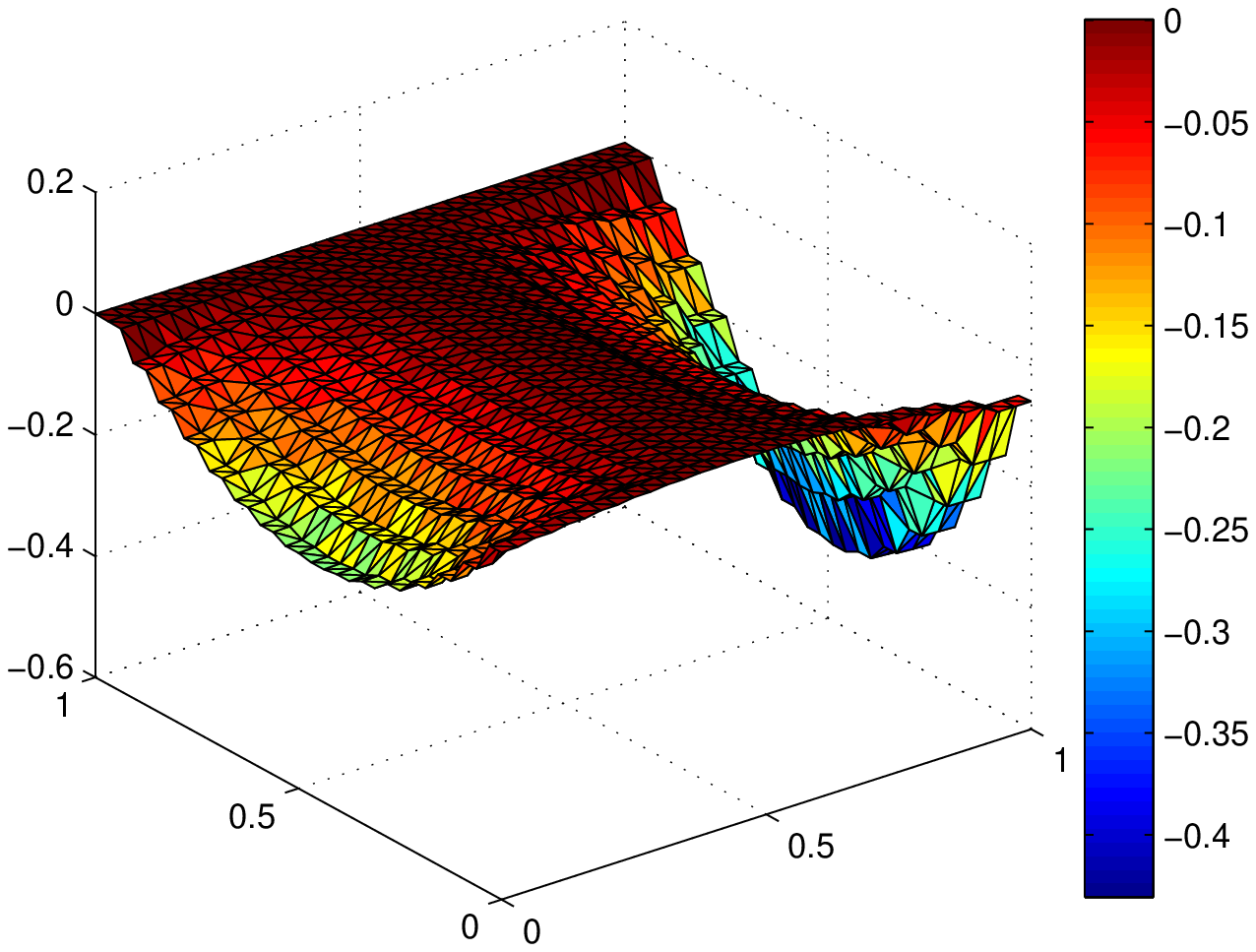}
\includegraphics [width=1.0in,height=1.0in]{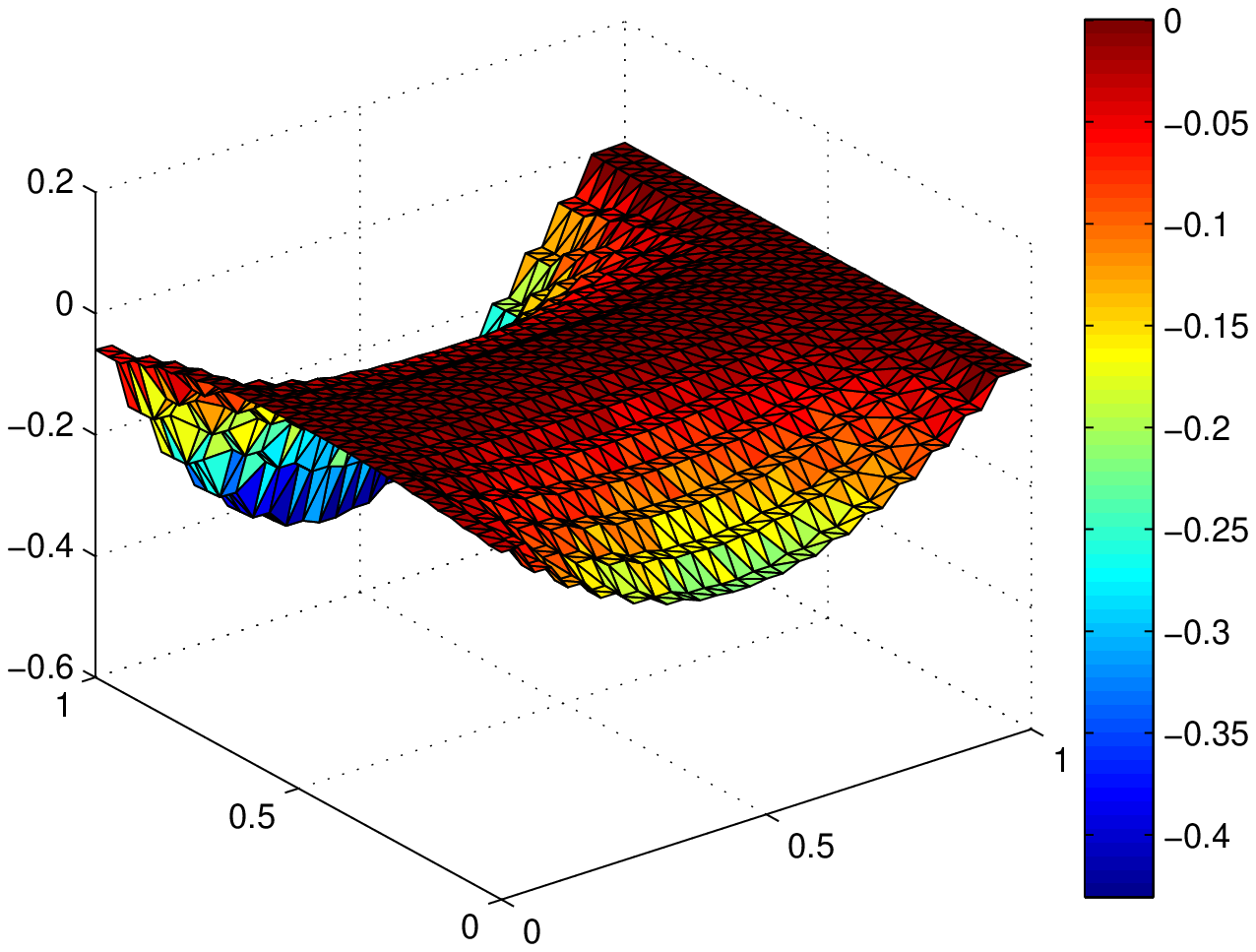}
 }
\label{fig:3}\caption{ Numerical solution for $\Pi_{2h}^1 E_{1h}$ (the first left) , $\Pi_{2h}^1E_{2h}$ (the second left ) and $\pi_{2h}^1P_{1h}$ (the first right) $\pi_{2h}^1P_{2h}$ (the second right) by super-convergence technique.}
\end{figure}

\begin{figure}
\centering
        \hbox{
\includegraphics [width=1.0in,height=1.0in]{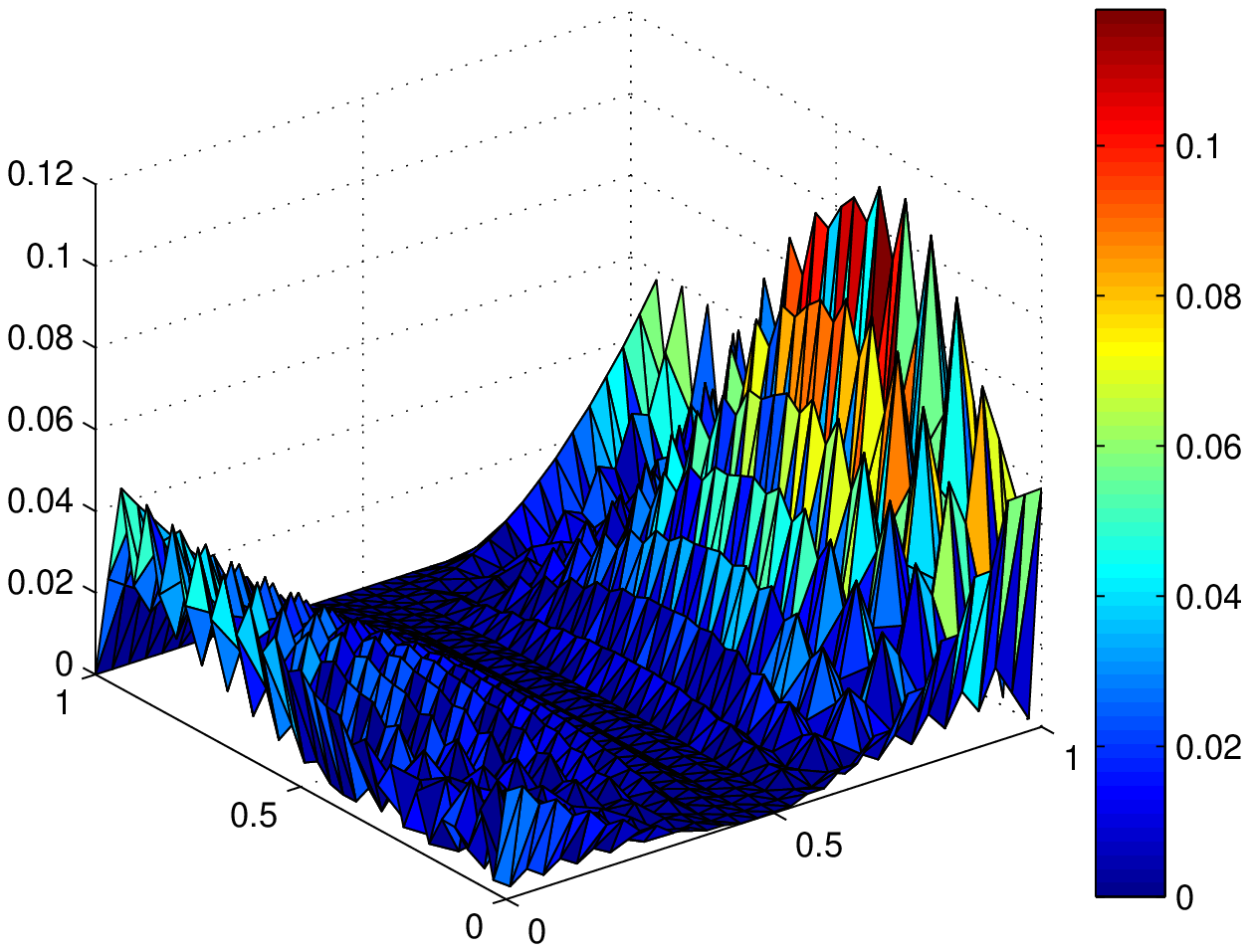}
\includegraphics [width=1.0in,height=1.0in]{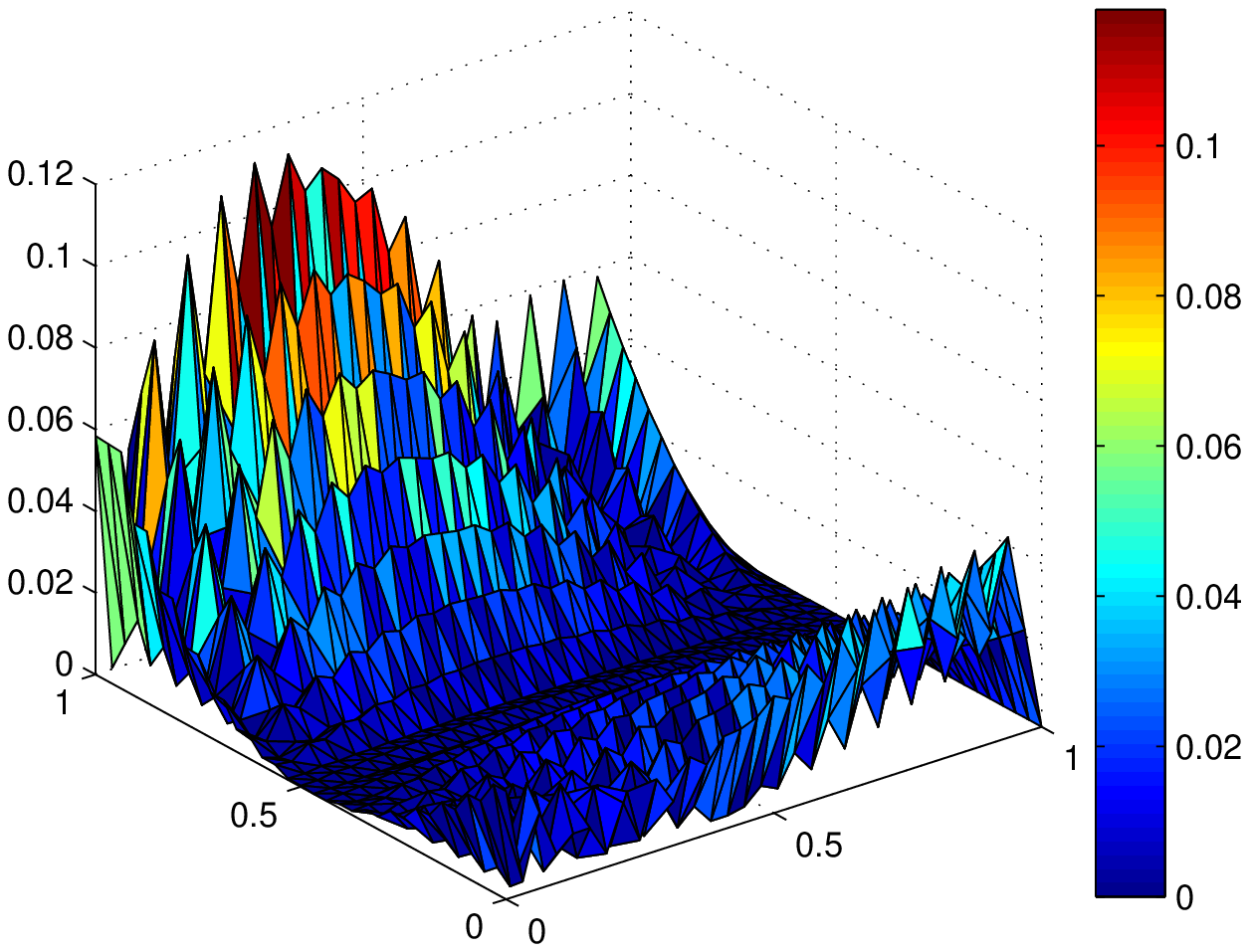}
 \includegraphics [width=1.0in,height=1.0in]{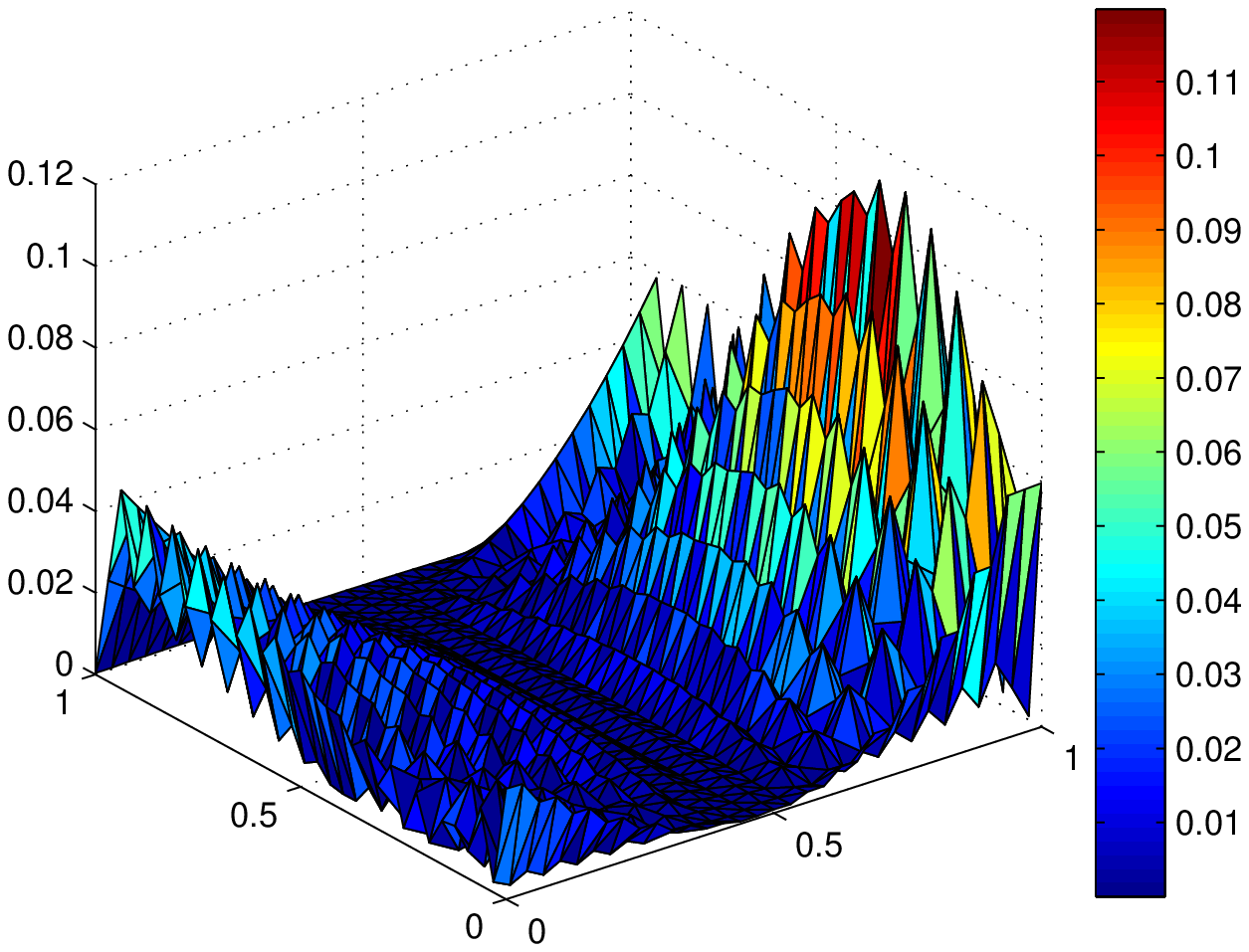}
\includegraphics [width=1.0in,height=1.0in]{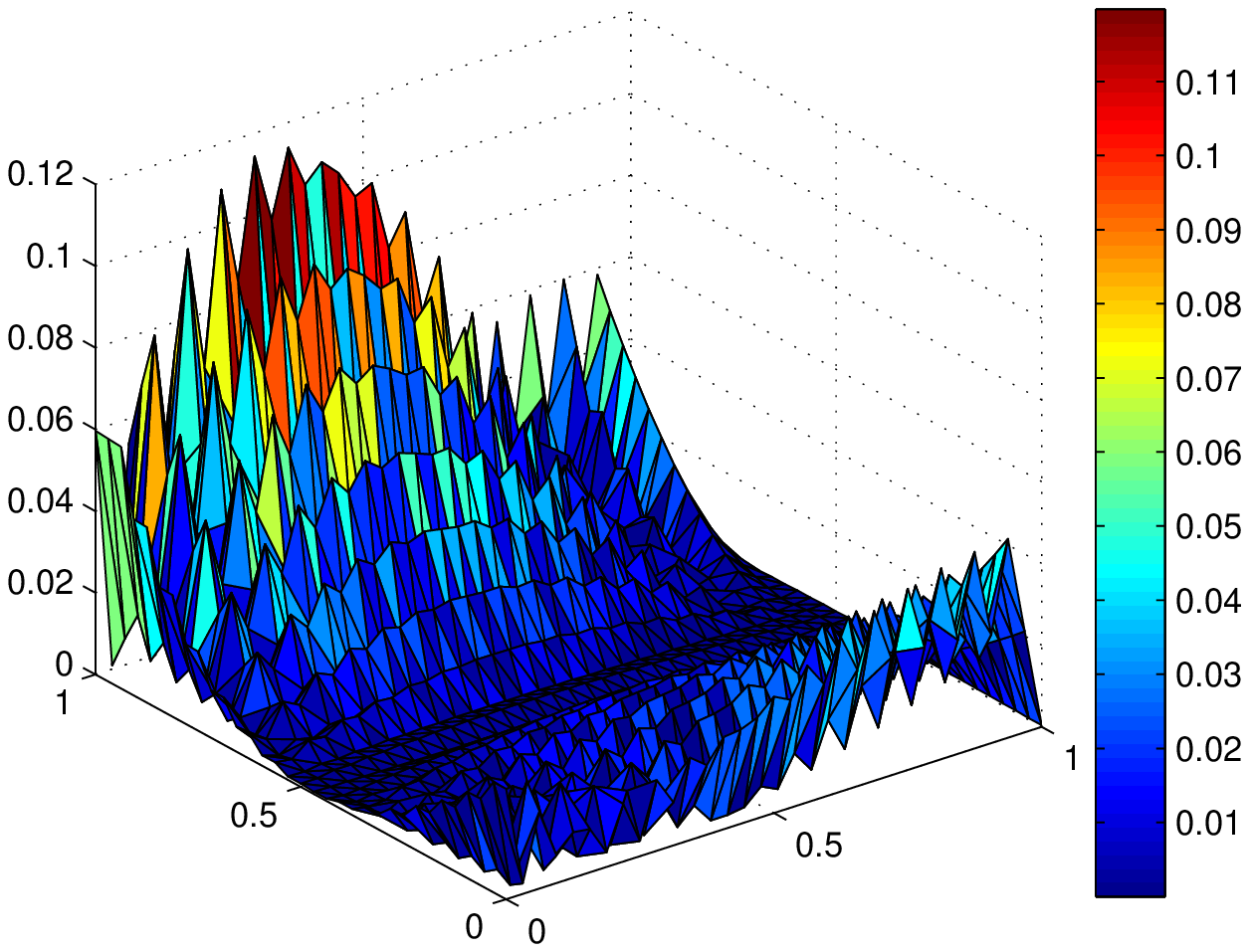}}
\label{fig:4}\caption{Error values for $E_{1h}$ (the first left) , $E_{2h}$ (the second left ) and $P_{1h}$ (the first right) $P_{2h}$ (the second right) by the  super-convergence technique.}
\end{figure}


\begin{figure}
\centering
        \hbox{
\includegraphics [width=1.0in,height=1.0in]{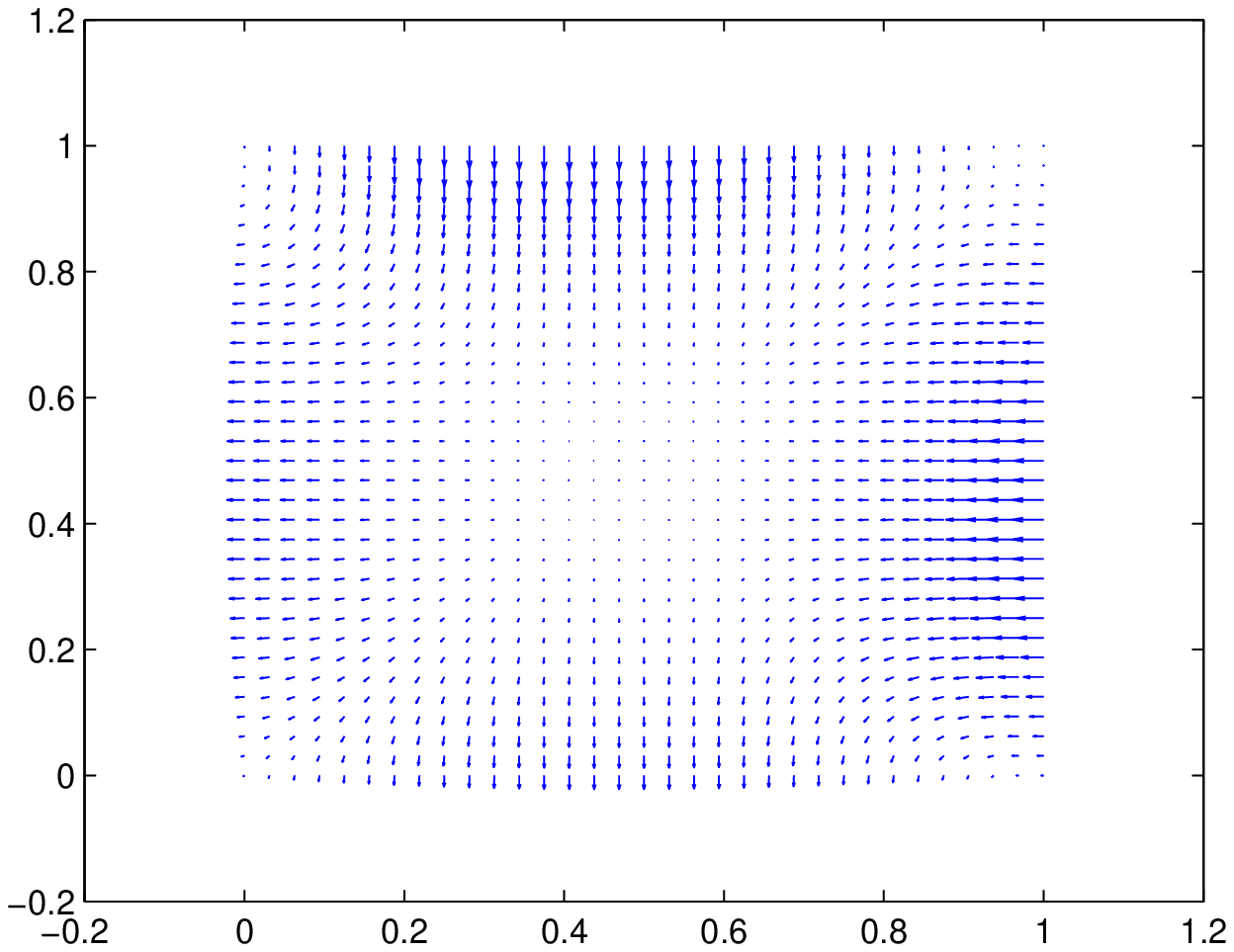}
\includegraphics [width=1.0in,height=1.0in]{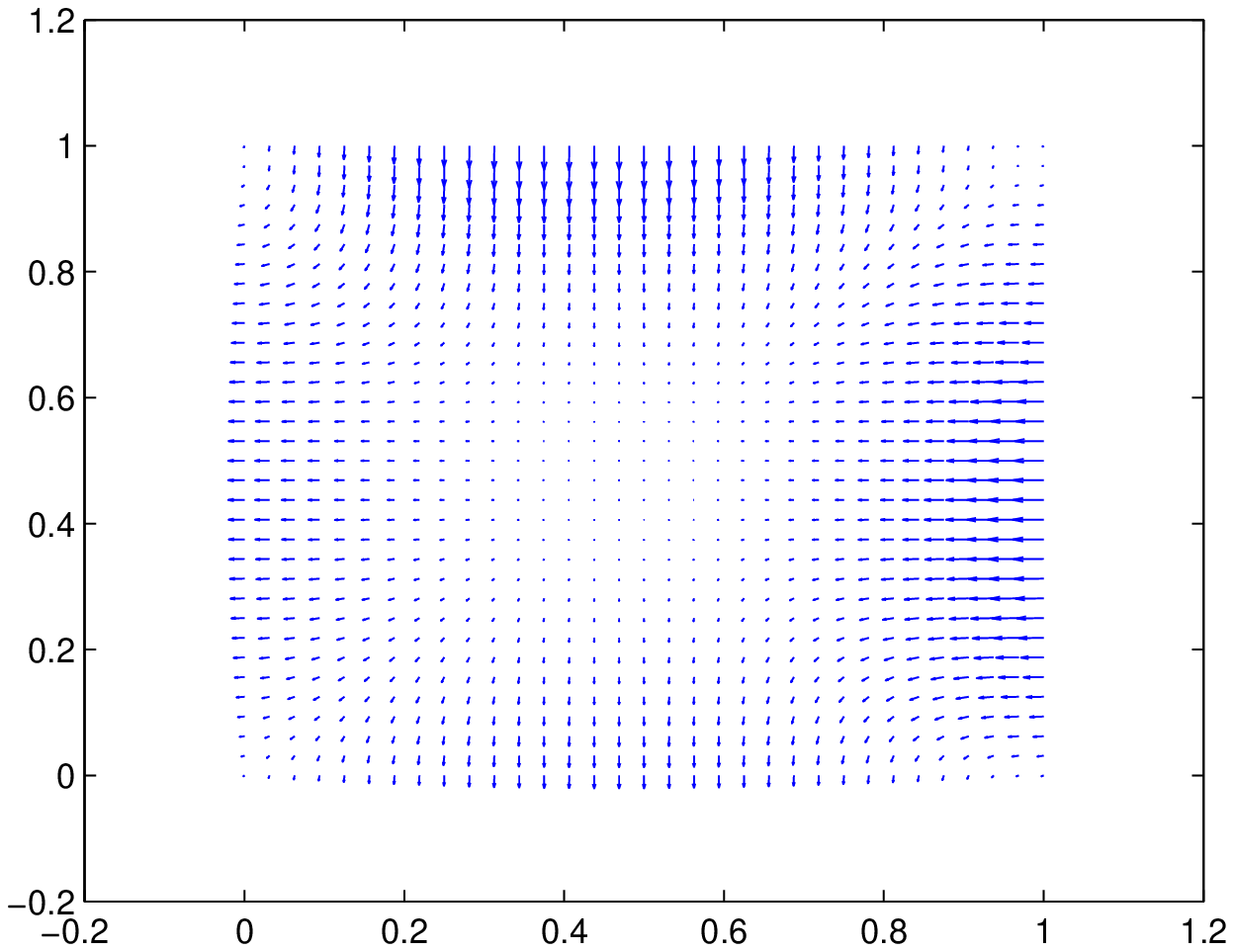}
\includegraphics [width=1.0in,height=1.0in]{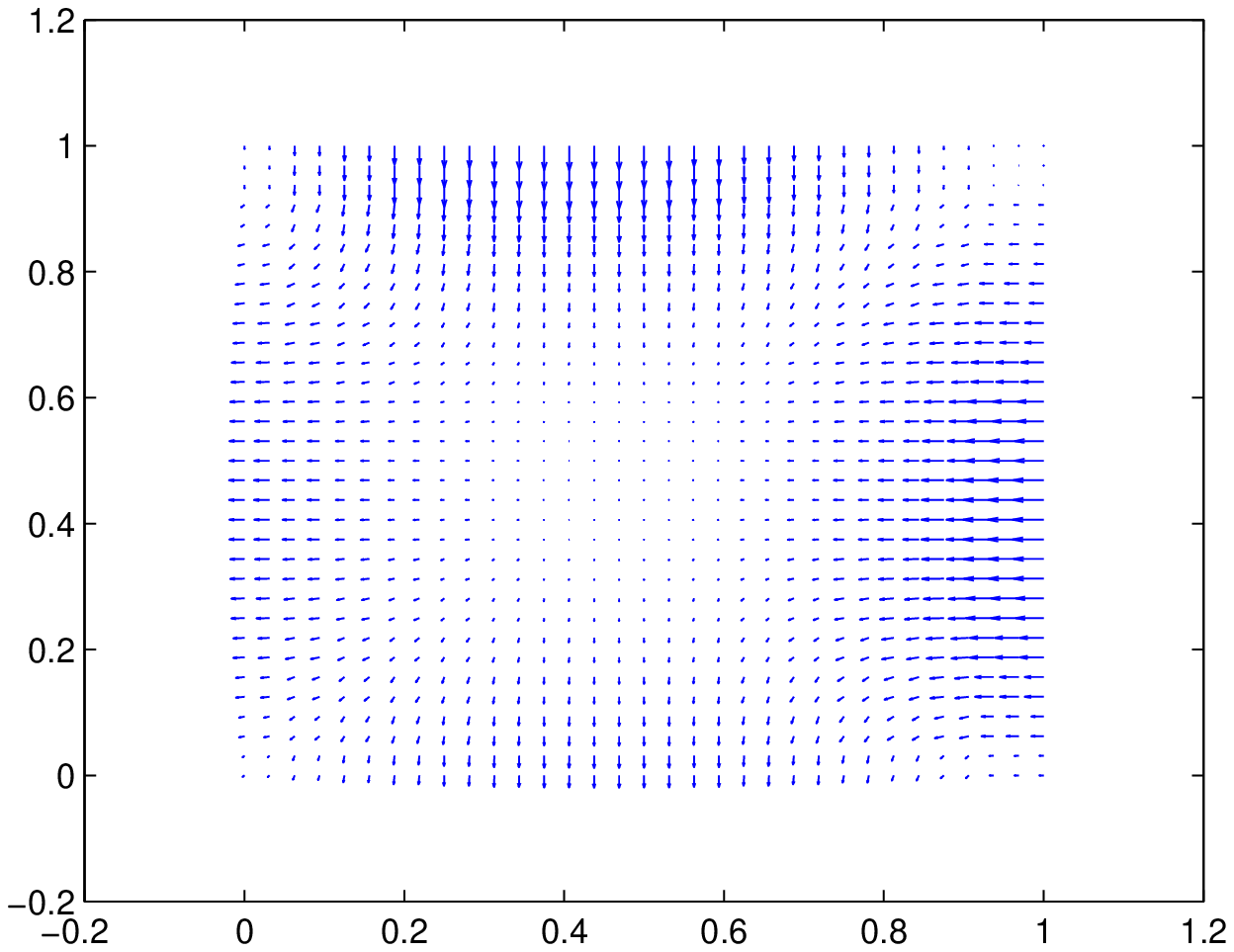}
\includegraphics [width=1.0in,height=1.0in]{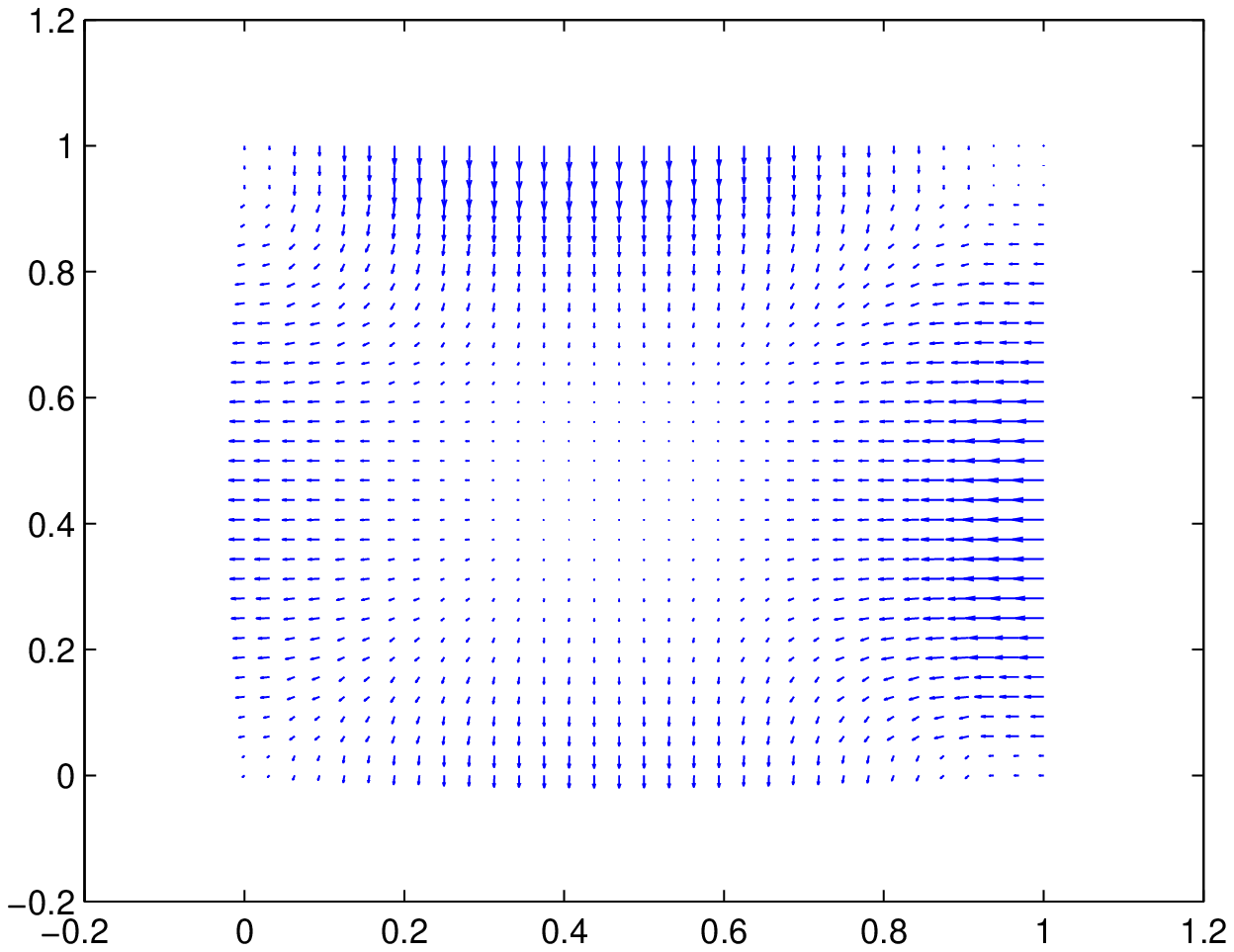}

 }
\label{fig:5}\caption{Vector values at the grids for the numerical solutions of ${\bf E}_h$, ${\bf P}_h$ (two left) and super-convergence vector values
${\bf E}_h$, ${\bf P}_h$ (two right).}
\end{figure}

\subsection{Example two: L-type domain}
In this subsection, we consider the domain is $L-$type, $\Omega=[0,1]^2/([0.5,1]\times [0, 0.5])$. The analysis solution are given by
\begin{eqnarray*}
{\bf E}&=&\exp(t)[sin(xy)y(y-0.5)(y-1)|2x-1|^\alpha,sin(xy)x(x-0.5)(x-1)|2y-1|^\alpha],\\
{\bf P}&=&{\bf E}.
\end{eqnarray*}
In figure 6-10, we show the numerical solutions, error values, the super-convergent solutions, error values by super-convergent technique and vector values at grids on the mesh $32\times 32$ after 100 time steps by $\Delta t=1e-5.$

\begin{figure}
\centering
        \hbox{
\includegraphics [width=1.0in,height=1.0in]{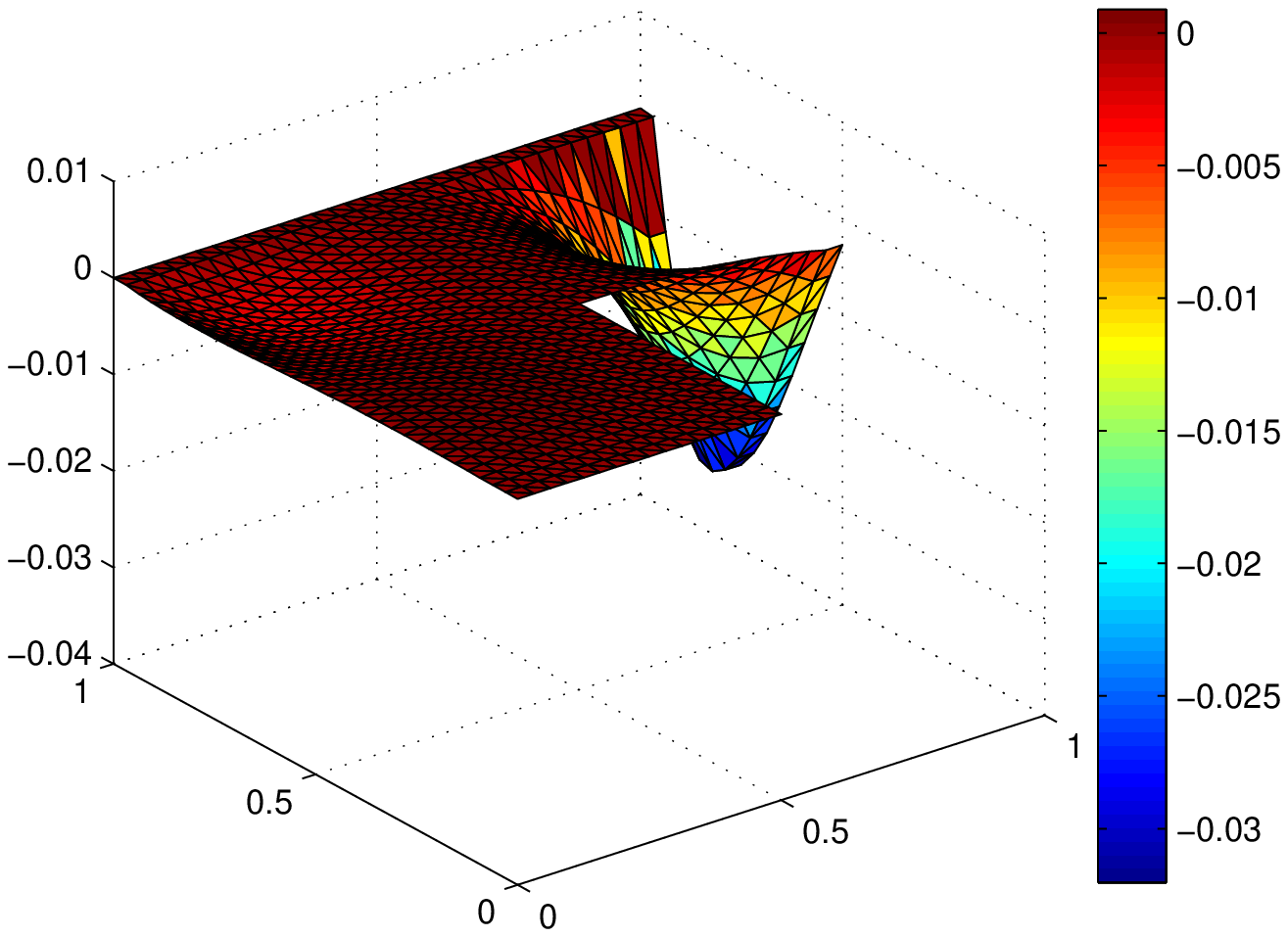}
\includegraphics [width=1.0in,height=1.0in]{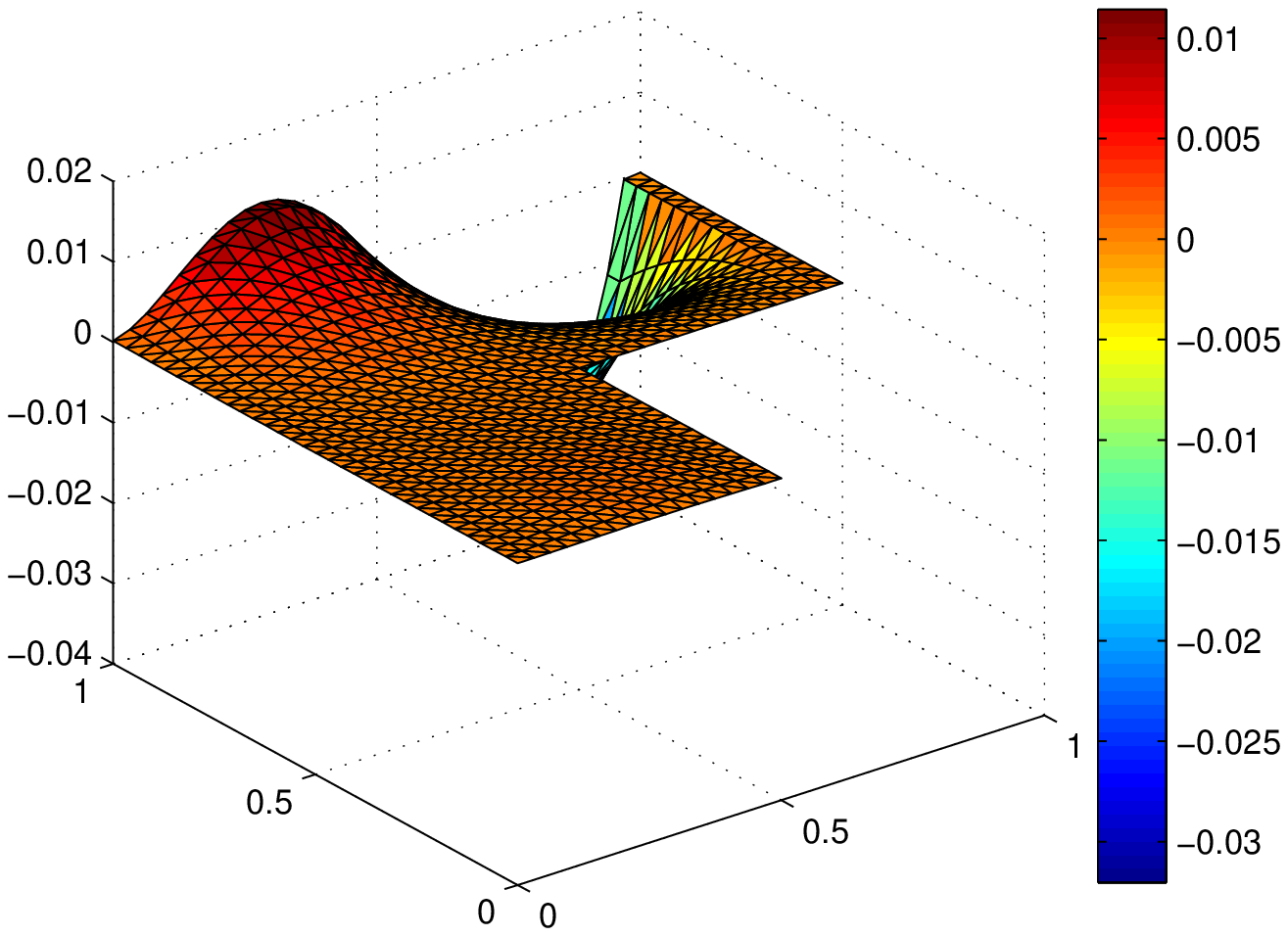}
\includegraphics [width=1.0in,height=1.0in]{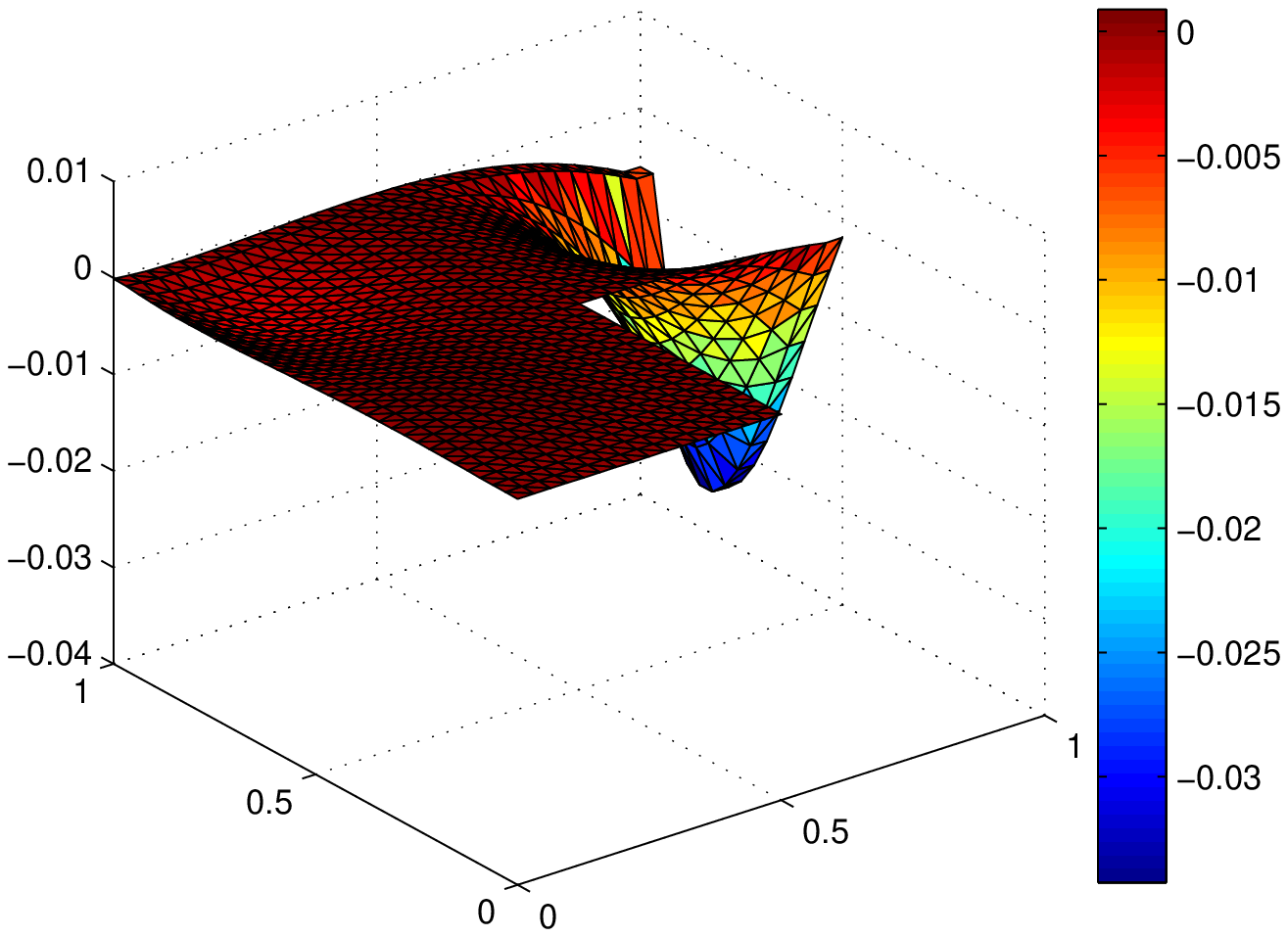}
\includegraphics [width=1.0in,height=1.0in]{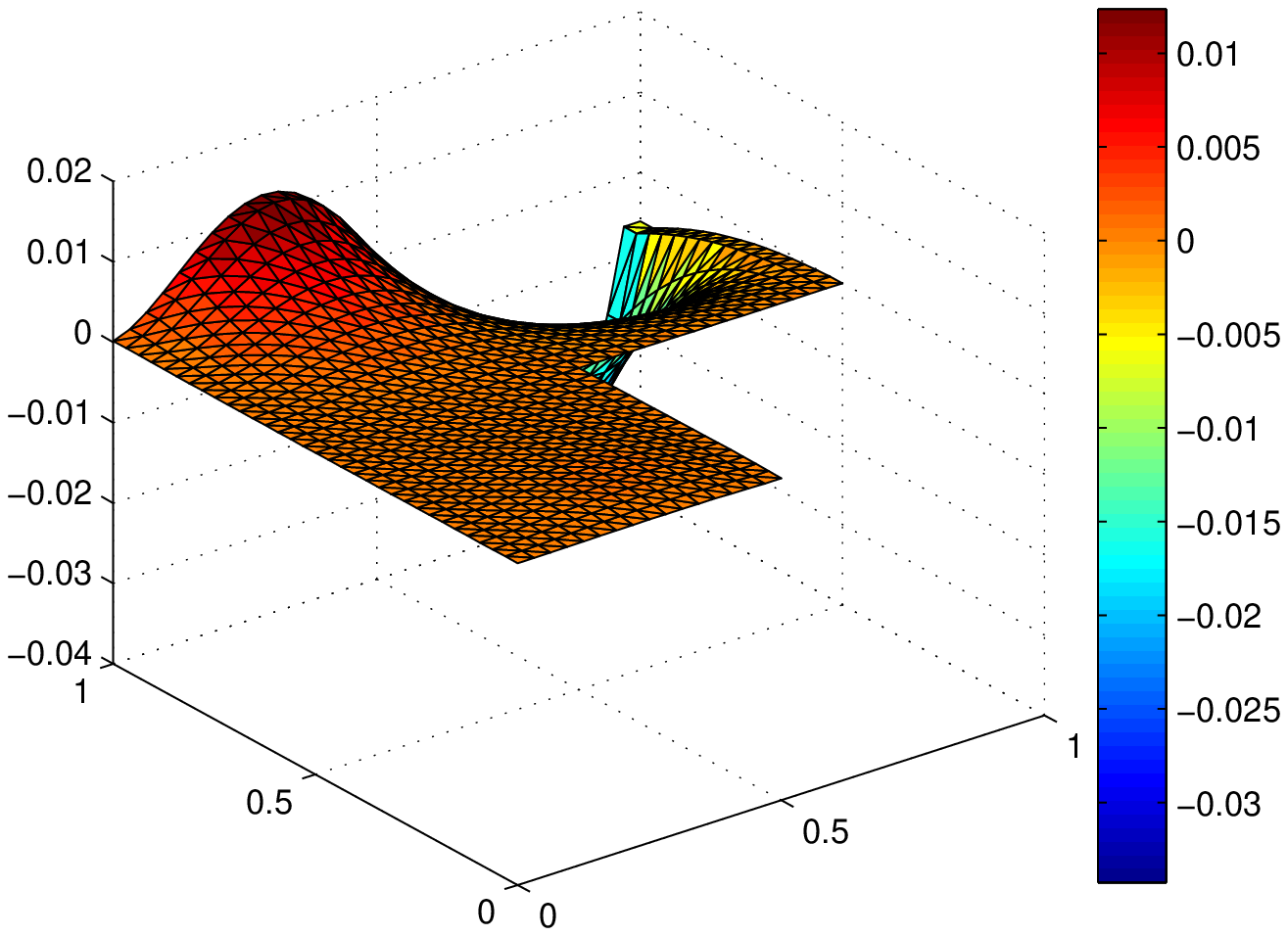}
 }
\label{fig:1}\caption{Numerical solution for $E_{1h}$ (the first left) , $E_{2h}$ (the second left ) and $P_{1h}$ (the first right) $P_{2h}$ (the second right) on the $L-$type domain.}
\end{figure}

\begin{figure}
\centering
        \hbox{
\includegraphics [width=1.0in,height=1.0in]{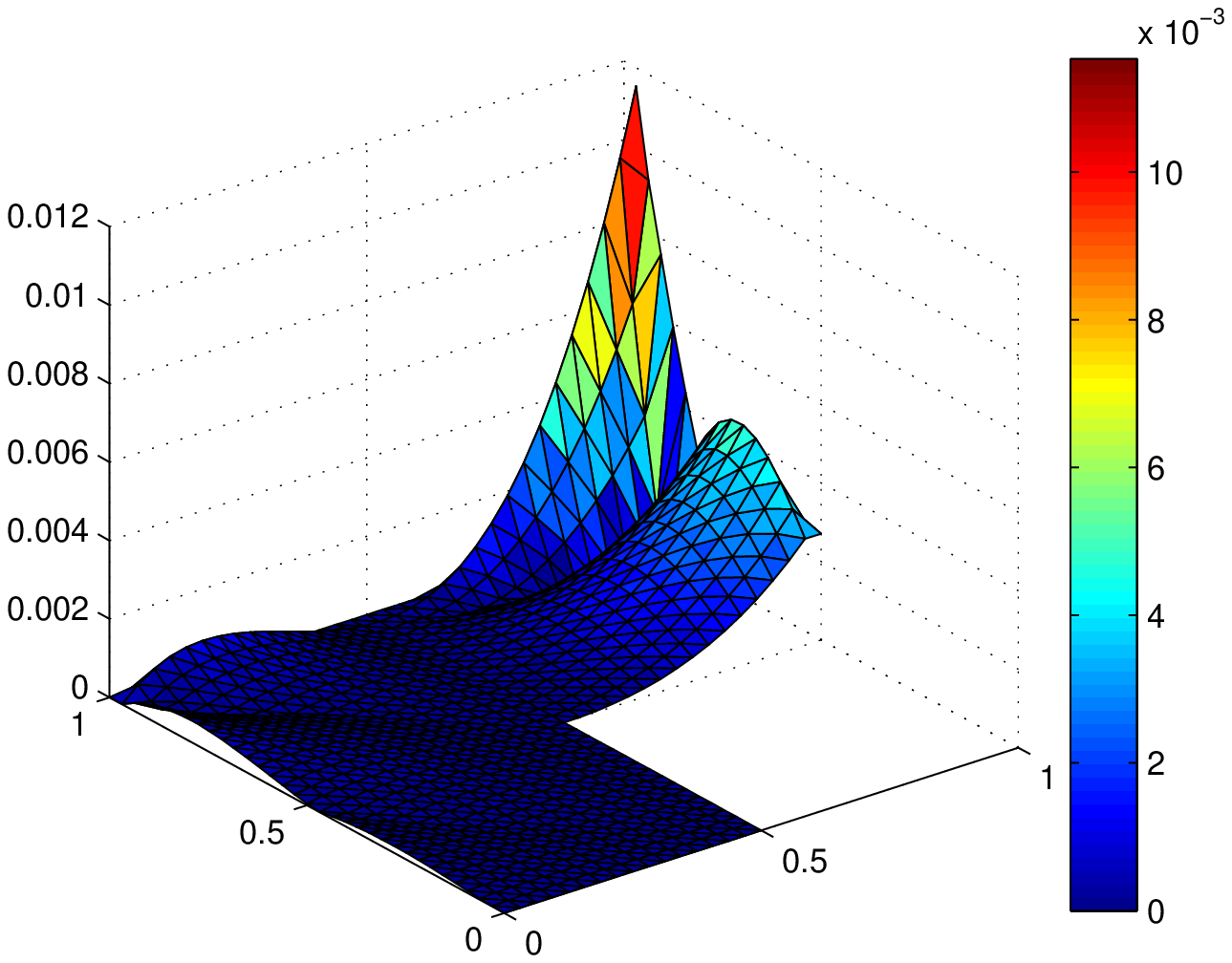}
\includegraphics [width=1.0in,height=1.0in]{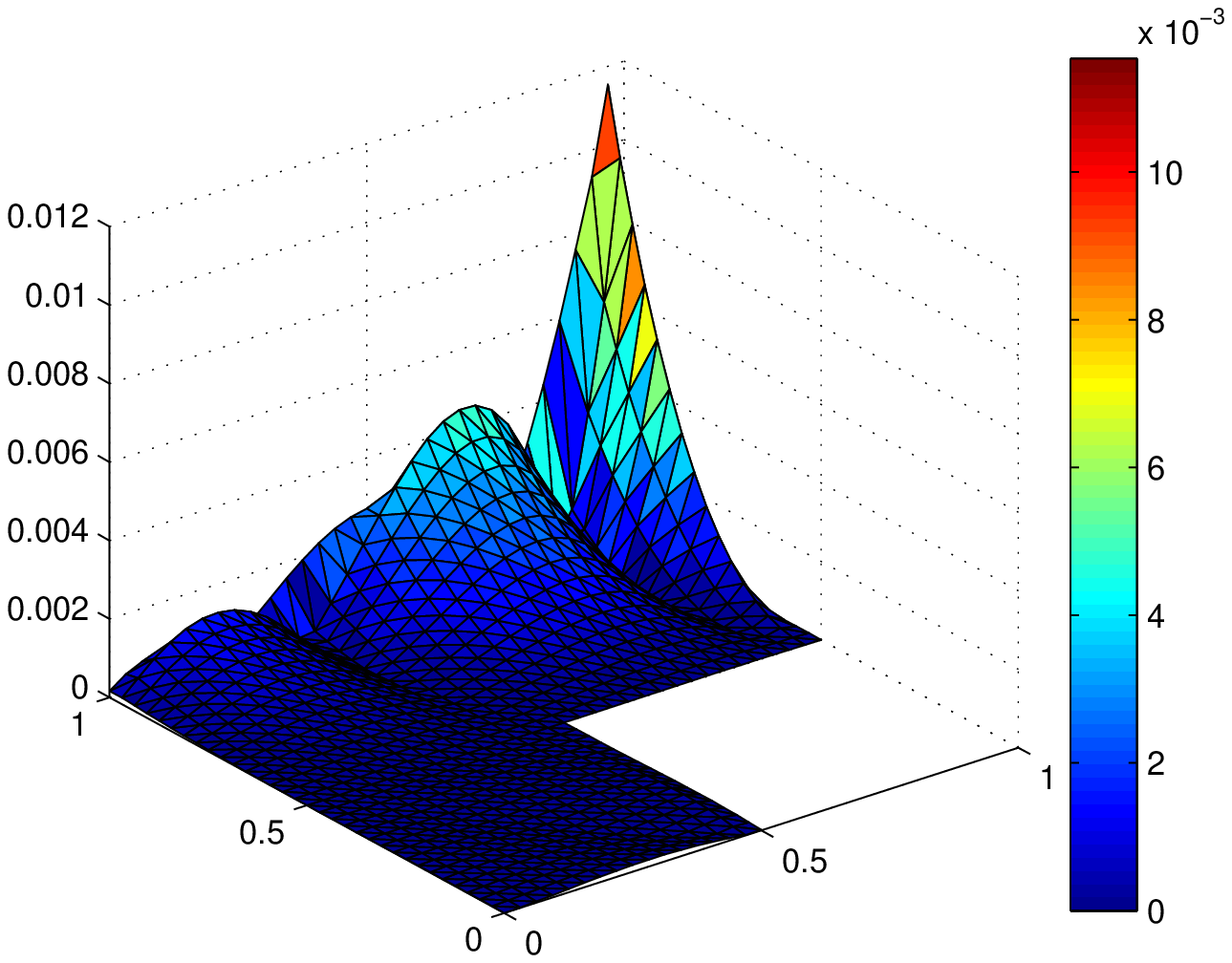}
 \includegraphics [width=1.0in,height=1.0in]{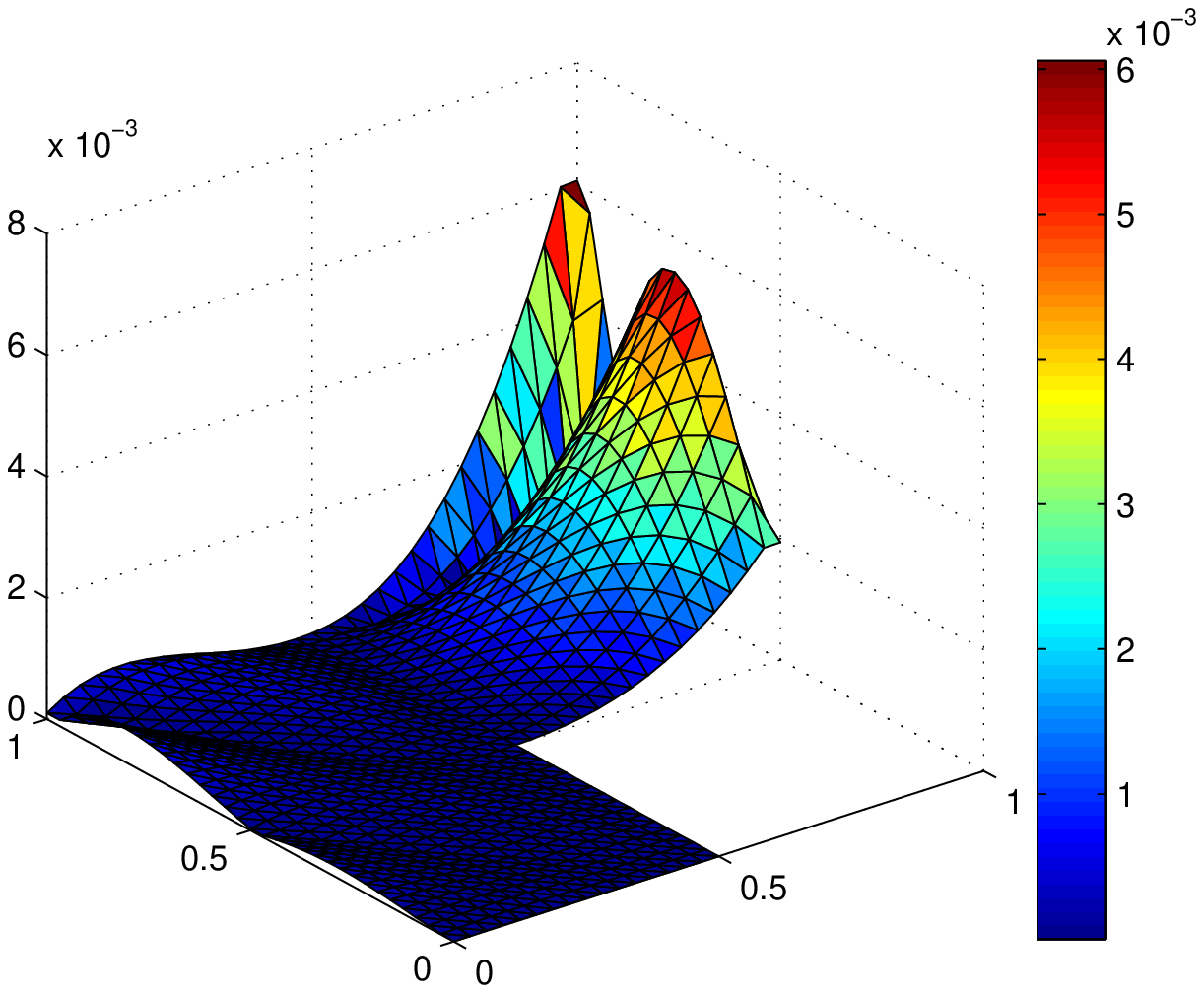}
\includegraphics [width=1.0in,height=1.0in]{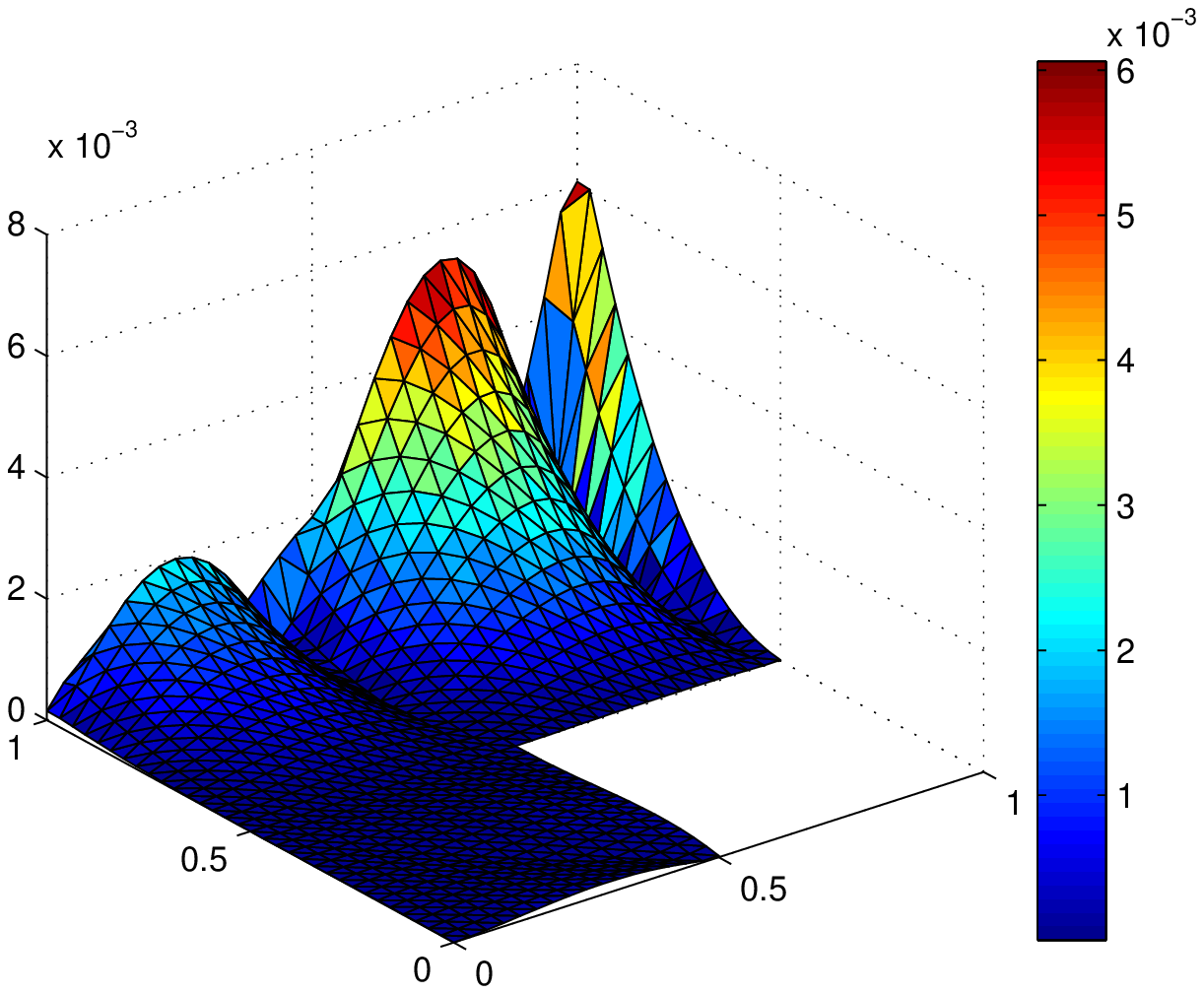}}
\label{fig:2}\caption{Error values for $E_{1h}$ (the first left) , $E_{2h}$ (the second left ) and $P_{1h}$ (the first right) $P_{2h}$ (the second right) on the $L-$type domain.}
\end{figure}

\begin{figure}
\centering
        \hbox{
\includegraphics [width=1.0in,height=1.0in]{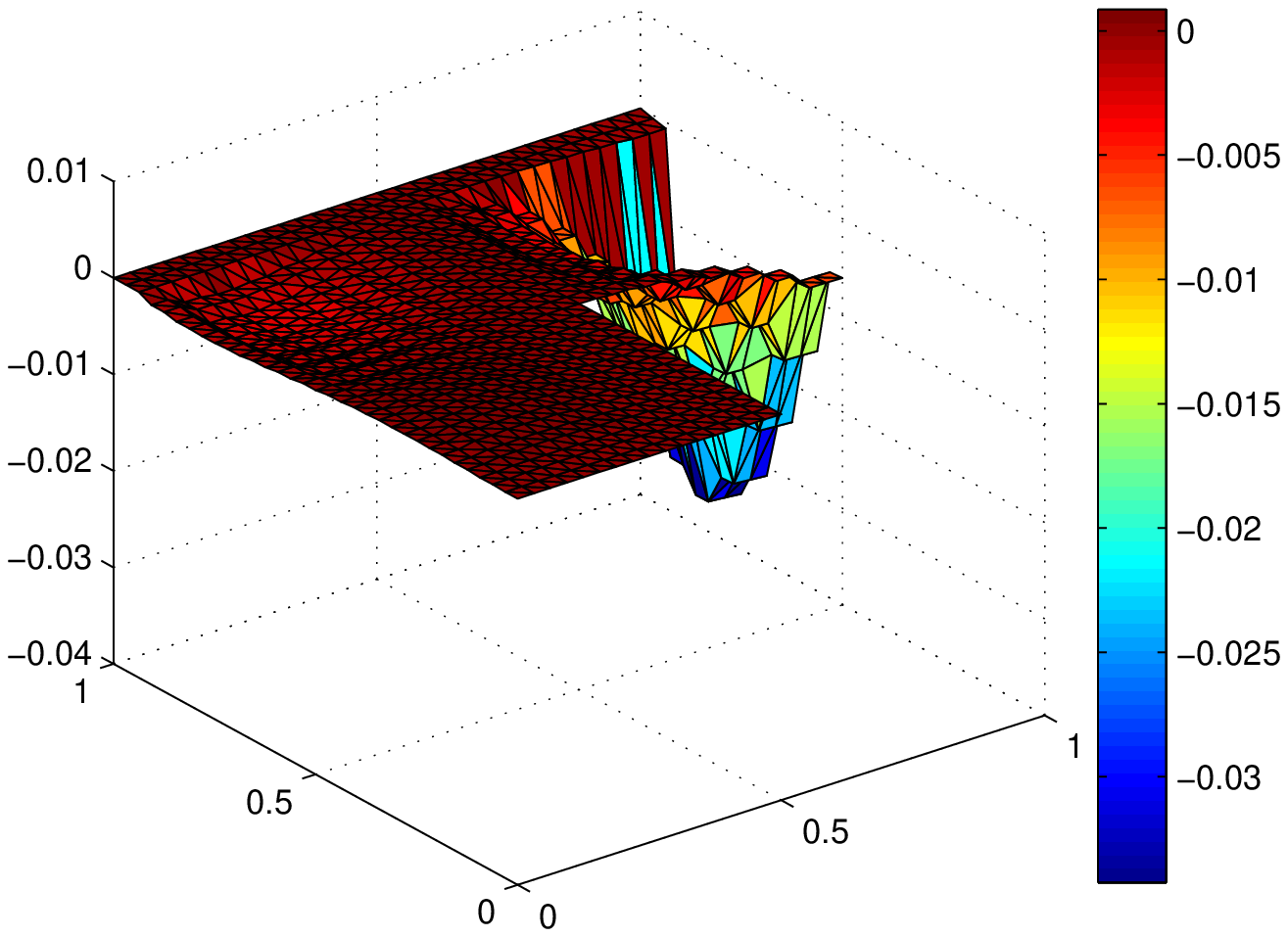}
\includegraphics [width=1.0in,height=1.0in]{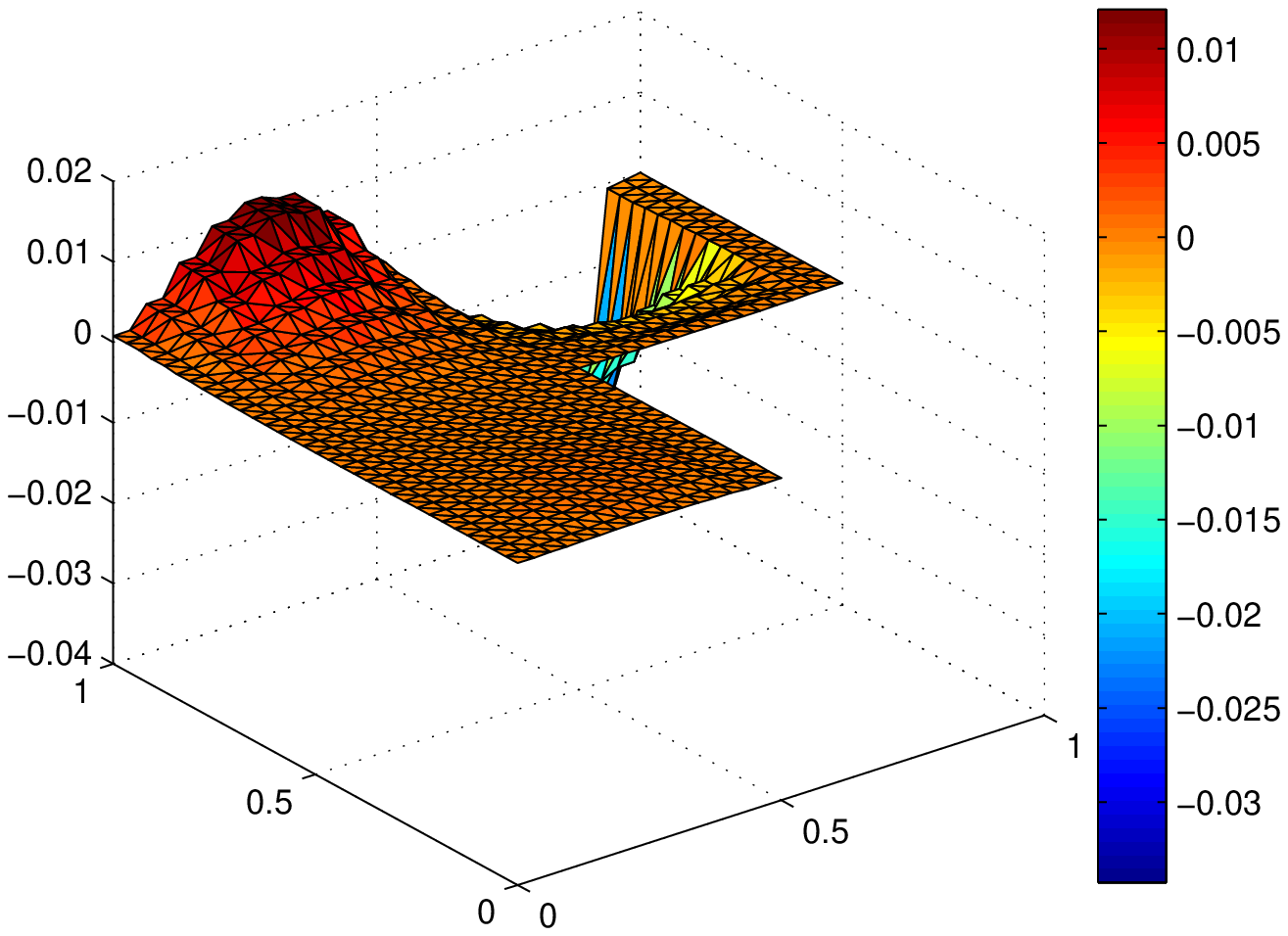}
\includegraphics [width=1.0in,height=1.0in]{L_Ph_32_tau1e-5_100_al21_P1.eps}
\includegraphics [width=1.0in,height=1.0in]{L_Ph_32_tau1e-5_100_al21_P2.eps}
 }
\label{fig:3}\caption{ Numerical solution for $E_{1h}$ (the first left) , $E_{2h}$ (the second left ) and $P_{1h}$ (the first right) $P_{2h}$ (the second right) by super-convergence technique on the $L-$type domain.}
\end{figure}

\begin{figure}
\centering
        \hbox{
\includegraphics [width=1.0in,height=1.0in]{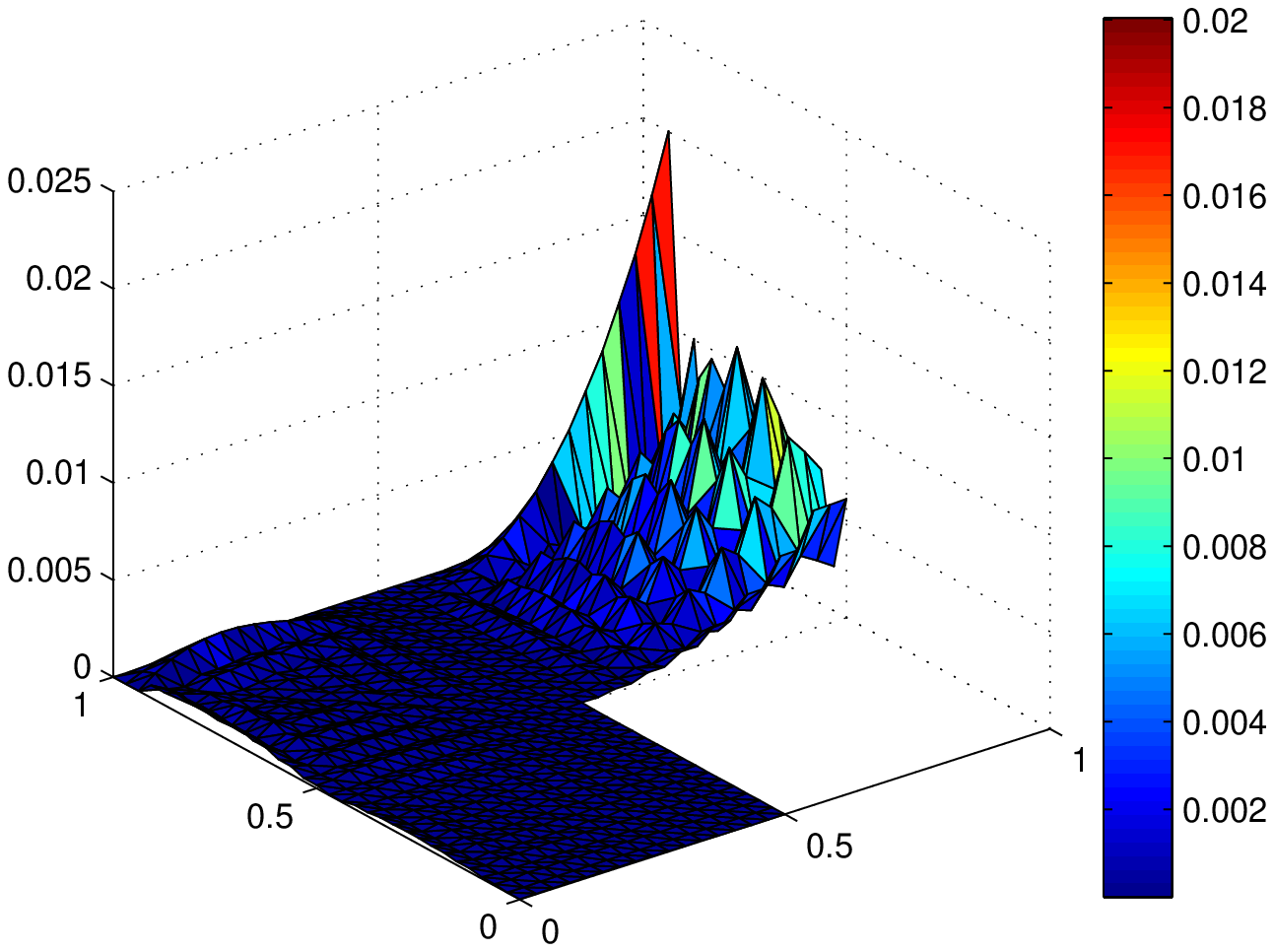}
\includegraphics [width=1.0in,height=1.0in]{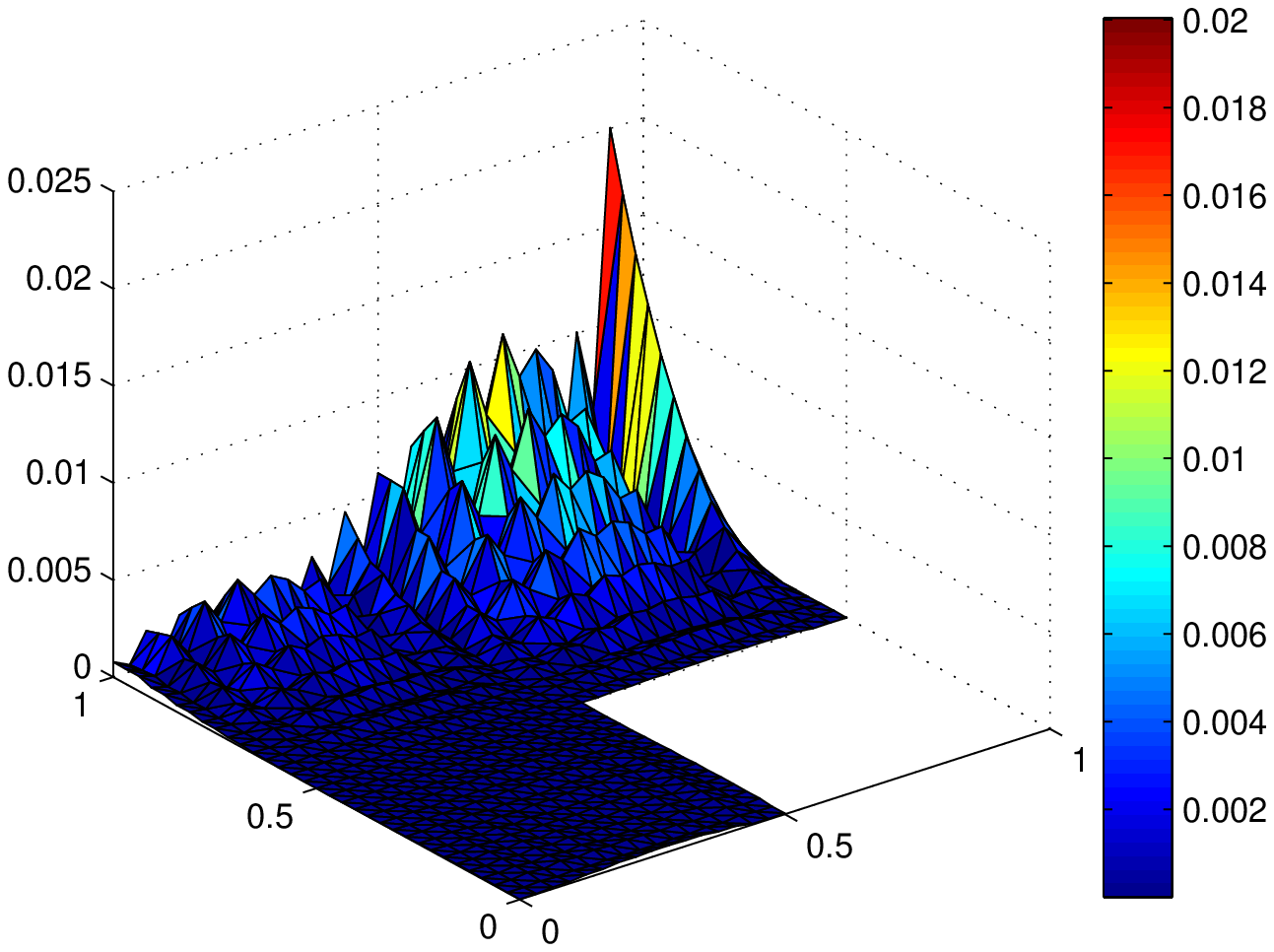}
 \includegraphics [width=1.0in,height=1.0in]{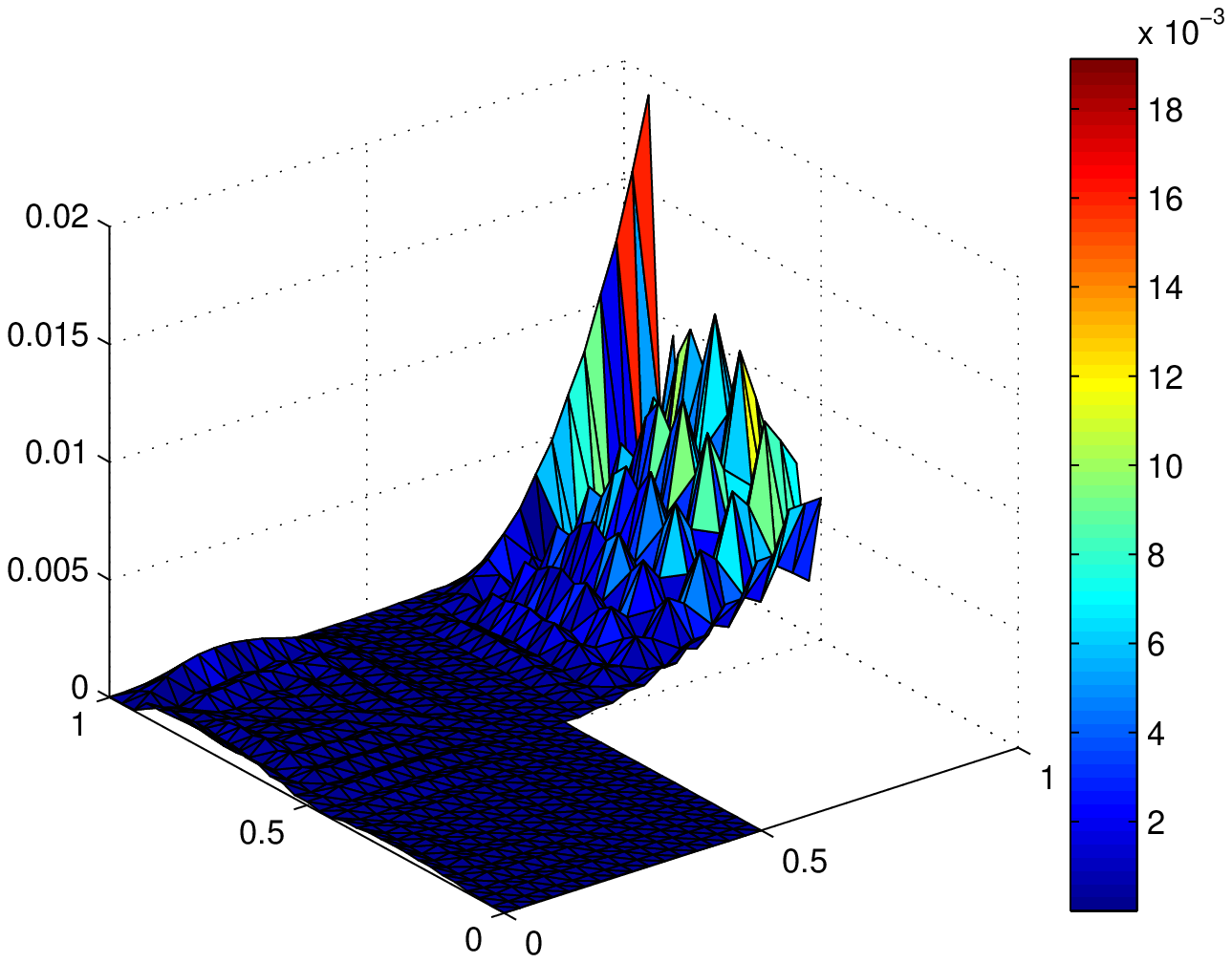}
\includegraphics [width=1.0in,height=1.0in]{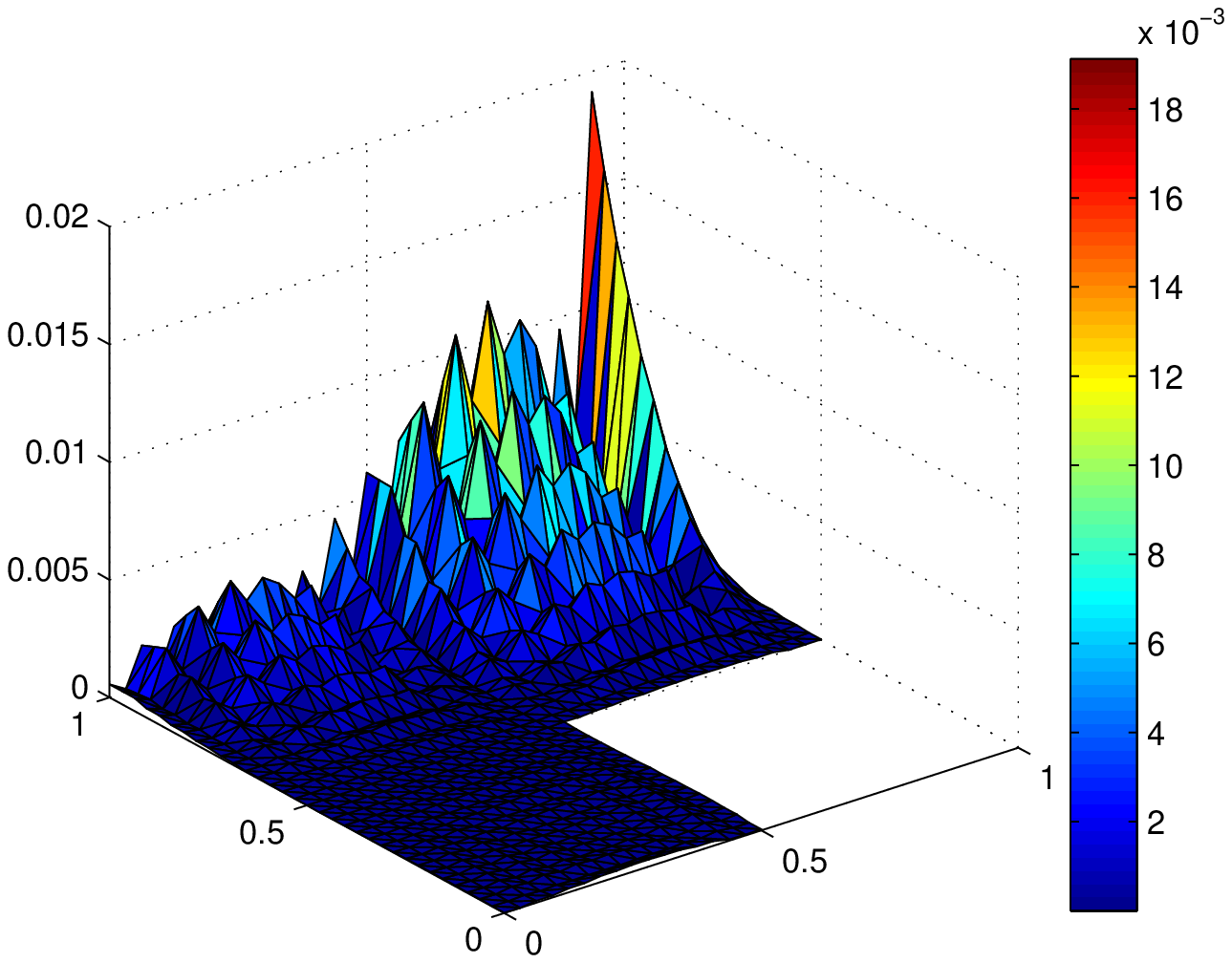}}
\label{fig:4}\caption{Error values for $E_{1h}$ (the first left) , $E_{2h}$ (the second left ) and $P_{1h}$ (the first right) $P_{2h}$ (the second right) by the  super-convergence technique on the $L-$type domain.}
\end{figure}


\begin{figure}
\centering
        \hbox{
\includegraphics [width=1.0in,height=1.0in]{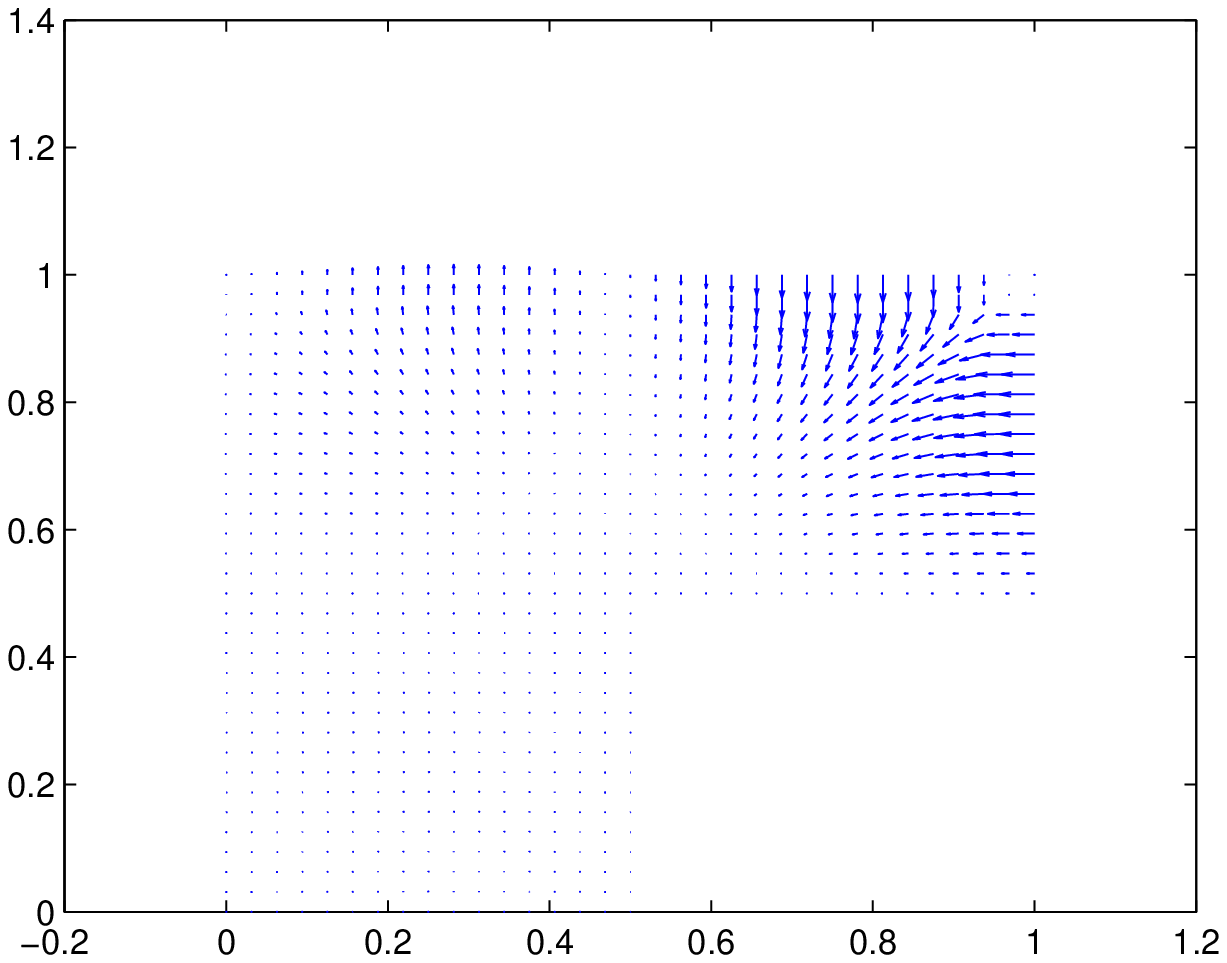}
\includegraphics [width=1.0in,height=1.0in]{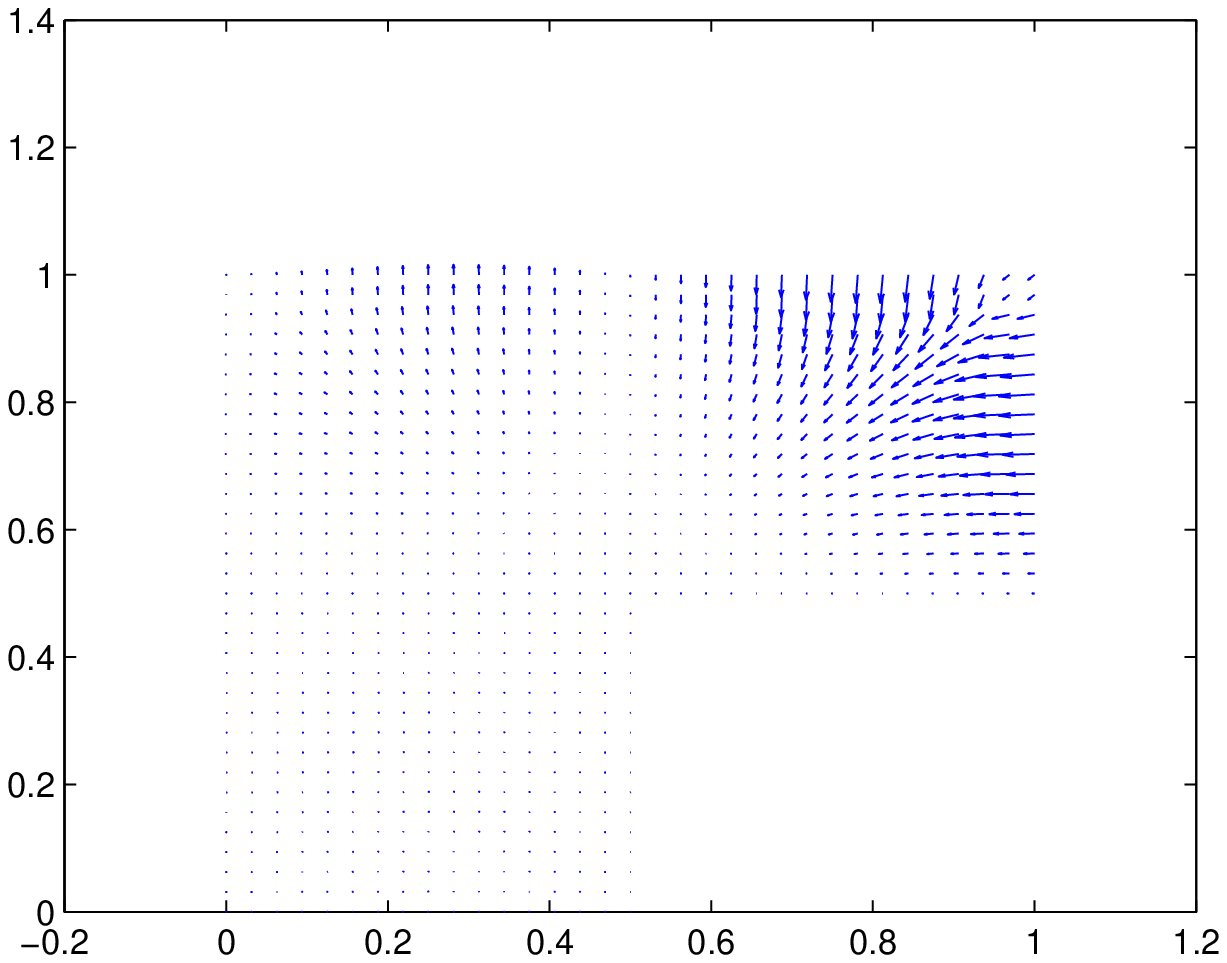}
\includegraphics [width=1.0in,height=1.0in]{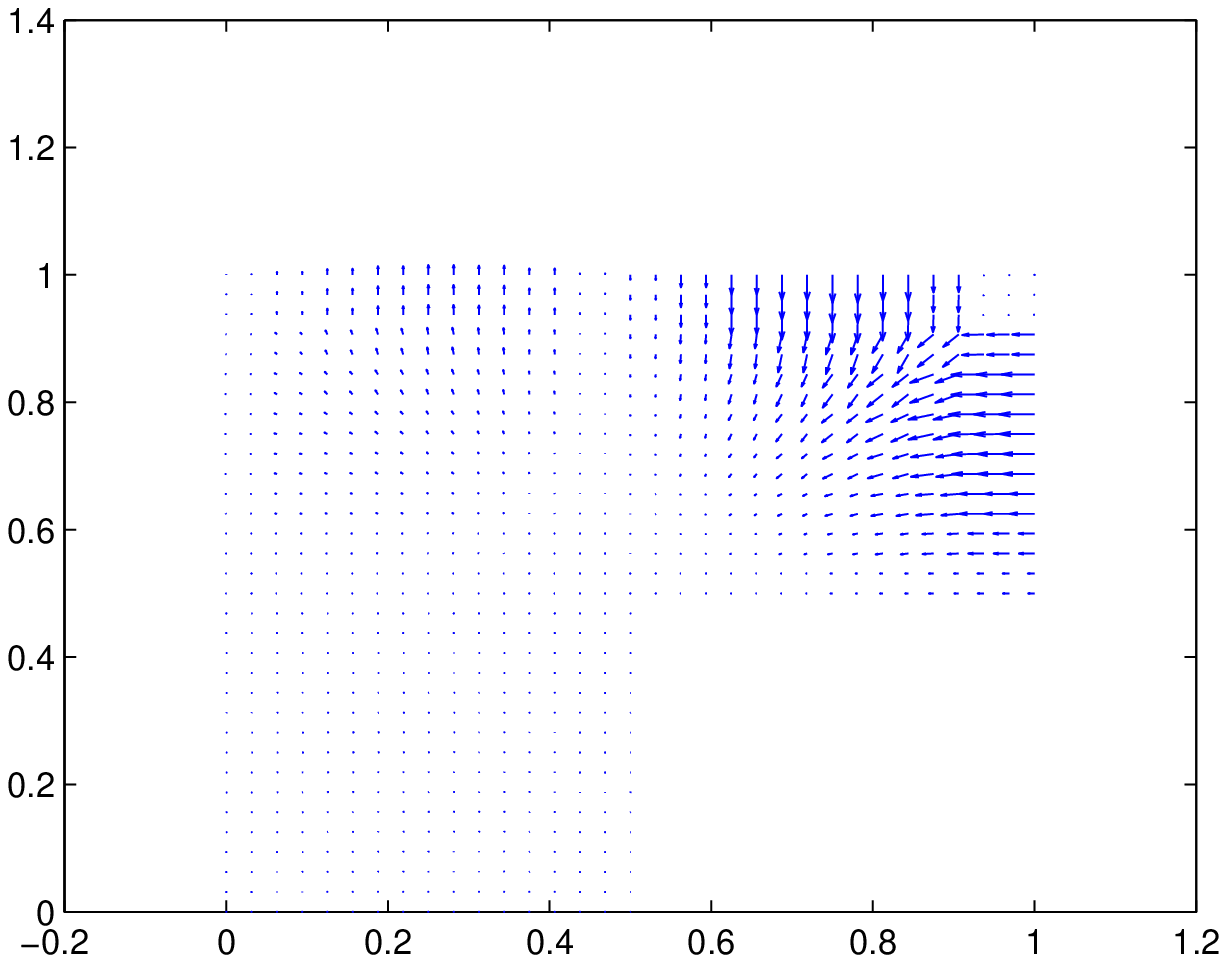}
\includegraphics [width=1.0in,height=1.0in]{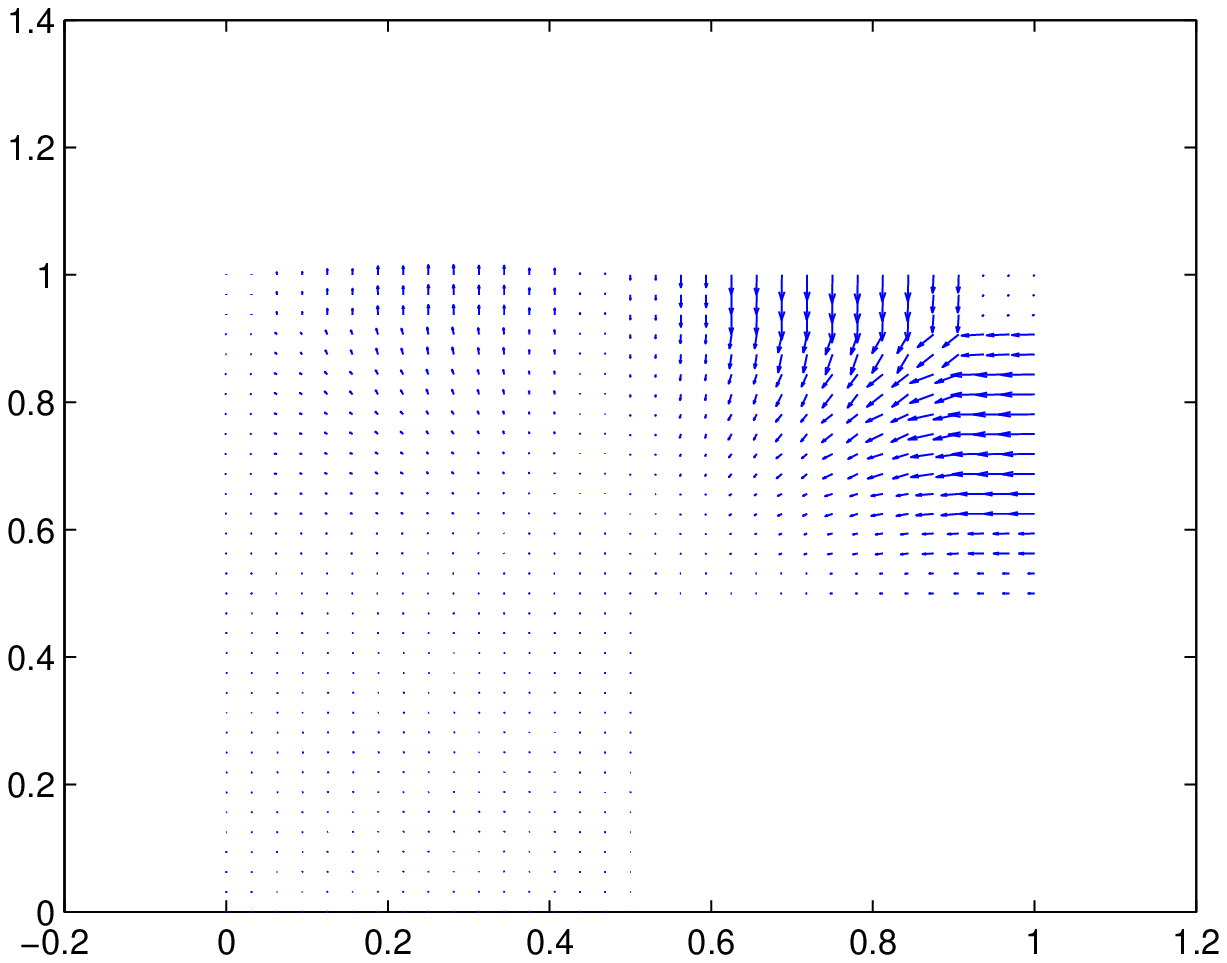}

 }
\label{fig:5}\caption{Vector values at the grids for the numerical solutions of ${\bf E}_h$, ${\bf P}_h$ (two left) and super-convergence vector values
${\bf E}_h$, ${\bf P}_h$ (two right)
 on the $L-$type domain.}
\end{figure}

\section{Conclusion}
In this paper, we first give the variational form in $ H(curl,\Omega)$ space, for the electric fields, different from $H^1(\Omega)$ in the previous work. In order to prove the existence and uniqueness of the variational form, we consider that the nonlinear function $f(x)$ is strongly monotone, Lipschitz continuous and bounded. By employing the monotone theory, we present the existence and uniqueness of the semi-discretization scheme. With the help of reflexive, weak convergence and Arzela-Ascoli theorem, we derive that the solutions of semi-discretization scheme in time converges strongly to the solutions of variational form.

Numerically, we employ the Raviart-Thomas-N$\acute{e}$d$\acute{e}$lec element to approximate the space and a decoupled scheme to discrete the time. To guarantee the $L^\infty$ boundedness of the numerical solutions, we utilize a-priori assumption, which leads to the condition of mesh partition. The optimal error estimates can be obtained under such an assumption for higher finite element space. For the lowest finite element space, we have to use the super-convergent technique. At last we give the numerical examples to demonstrate our methods, in convex domain and $L$-type domain.



\section{Acknowledgements}The first author is supported by NSF (No. 11571027) and the Beijing Nova Program (No. Z1511000003150140). The second author is supported  by NSFC. China (NO.11201501, 11571389,11671165) . The last author is supported by NSFC. China (NO.11471296, 11101384).
At the same time, the authors  gratefully acknowledge the referees for their
great efforts and valuable suggestions or questions on our
manuscript.

\end{document}